\newcommand{\R}{\mathbb{R}}
\newcommand{\N}{\mathbb{N}}
\newcommand{\dd}{\mathrm{d}}
\newtheorem{Alg}{Algorithm}[section]
\newtheorem{theorem}{Theorem}[section]
\newtheorem{lemma}{Lemma}[section]
\newtheorem{remark}{Remark}[section]
\newtheorem*{maintheorem*}{Main Theorem}
\title{Well-posedness and numerical analysis of an elapsed time model with strongly coupled neural networks}
\author{
\textsc{Mauricio Sep\'ulveda\thanks{\textbf{Corresponding author.} CI$^2$MA and DIM, Universidad de Concepci\'on, Concepci\'on-Chile. E-mail: \texttt{mauricio@ing-mat.udec.cl}}},
\textsc{Nicolas Torres \thanks{Departamento de Matemática Aplicada, Universidad de Granada-Espa\~na. E-mail: \texttt{torres@ugr.es}}} \, and      
\textsc{Luis Miguel Villada\thanks{GIMNAP-Departamento de Matem\'atica, Universidad del B\'io-B\'io and CI$^2$MA-Universidad de Concepci\'on, Concepci\'on-Chile. E-mail: \texttt{lvillada@ubiobio.cl}} }
}
\begin{document}
\maketitle
\begin{abstract}
The elapsed time equation is an age-structured model that describes the dynamics of interconnected spiking neurons through the elapsed time since the last discharge, leading to many interesting questions on the evolution of the system from a mathematical and biological point of view. In this work, we deal with the case when the transmission after a spike is instantaneous and the case with a distributed delay that depends on the previous history of the system, which is a more realistic assumption. Since the instantaneous transmission case is known to be ill-posed due to non-uniqueness or jump discontinuities, we establish a criterion for well-posedness to determine when the solution remains continuous in time, through an invertibility condition that improves the existence theory under more relaxed hypothesis on the nonlinearity, including the strongly excitatory case. Inspired in the existence theory, we adapt the classical explicit upwind scheme through a robust fixed-point approach and we prove that the approximation given by this scheme converges to the solution of the nonlinear problem through BV-estimates and we extend the idea to the case with distributed delay. We also show some numerical simulations to compare the behavior of the system in the case of instantaneous transmission with the case of distributed delay under different parameters, leading to solutions with different asymptotic profiles.
\end{abstract}

\noindent{\makebox[1in]\hrulefill}\newline
2010 \textit{Mathematics Subject Classification.} 35A35, 35F20, 35R09, 65M06.

\textit{Keywords and phrases.} Structured equations; Mathematical neuroscience; Delay differential equations, Well-posedness, Numerical analysis; Fixed-point approach.

\section{Introduction}

The study of the precise mechanisms of brain processes has been a challenge for mathematicians and biologists with many interesting models including discrete systems, differential equations and stochastic processes. In particular, structured equations have been have proved to be a useful approach to understand how neurons interact and predict synchronous regular or irregular activities \cite{gerstner1995time,coombes2023neurodynamics,carrillo2025nonlinear}.

One of the seminal models in computational neuroscience developed by \cite{lapicque}, considered a neuron as part of an electrical network. This idea eventually was extended to models involving a large number of neurons, such as the integrate-and-fire systems, where the ensemble is described through the membrane potential by taking into account the internal and external voltage differences, leading to the well-known Fokker-Planck equations through a mean-field limit approach. These systems have been studied by many authors \cite{caceres2011analysis,carrillo2015qualitative,caceres2021understanding,carrillo2023noise,caceres2019global,ikeda2022theoretical,caceres2025asymptotic,caceres2024sequence} through different analysis techniques and numerical simulations.

In addition to considering the membrane potential to describe the behavior of neurons, age-structured models are an alternative approach to model the dynamics of interconnected spiking neurons. One of these equations is the well-known elapsed time model, where a given population of neurons is described by the {time elapsed} since their last discharge. In this network neurons are subjected to random discharges, which is related to changes in the membrane potential, stimulating other neurons to spike. One of the main motivations for this model is to predict brain activity through the previous history of spikes, and the key element determining the evolution of the system is the way neurons interact within the network, leading to different possible behaviors of neural activity and pattern formation.

Like the Fokker-Planck equation, this equation is a mean-field limit of a microscopic model that establishes a bridge of the dynamics of a single neuron with a population-based approach, whose aspects have been investigated in \cite{pham1998activity,ly2009spike,chevallier2015microscopic,chevallier2015mean,quininao2016microscopic,schwalger2019mind} and the connection between the two models has been studied in \cite{dumont2016noisy,dumont2020theoretical}.

This elapsed-time model was initially investigated in the pioneering works of
\cite{pakdaman2009dynamics,pakdaman2013relaxation,pakdaman2014adaptation} and then studied by many authors. Some important results on exponential convergence to the equilibrium for weak nonlinearities were proved in \cite{canizo2019asymptotic,mischler2018,mischler2018weak,caceres2025comparison} through different techniques such as the entropy method, semi-group theory, spectral arguments and comparison methods. Results on strong nonlinearities have been studied in \cite{torres2021elapsed}, where the existence of periodic solutions with jump discontinuities, while \cite{perthame2025strongly} proved convergence in the inhibitory case and results and the local stability was studied in \cite{caceres2025local}. Moreover, different extensions of the elapsed time model have been studied by incorporating new elements such as the fragmentation equation \cite{pakdaman2014adaptation}, spatial dependence with connectivity kernel in \cite{torres2020dynamics,dumont2024oscillations}, a multiple-renewal equation in \cite{torres2022multiple} and a leaky memory variable in \cite{fonte2022long}.

The classical elapsed time model with instantaneous transmission, which we will call throughout this article as \textit{Instantaneous Transmission Model} (ITM), is given by the following nonlinear age-structured equation (\cite{pakdaman2009dynamics})
\begin{equation}\label{eq:model}
\tag{ITM}
\begin{cases}
    \partial_tn+\partial_s n+p(s,N(t))n = 0,  & t>0,\: s>0, \\
    N(t)=n(t,s=0)= \int_0^{+\infty}p(s,N(t))n(t,s) \dd s,& t>0,\\
    n(0,s)=n^0(s)\geq0,& s\geq0,\\
    \int_0^{\infty}n^0(s)\dd s=1,
\end{cases}
\end{equation}
where $n(t,s)$ is the probability density of finding a neuron at time $t$, whose elapsed time since last discharge is $s\ge0$ and the function $N(t)$ represents the flux of discharging neurons.

For this equation, we assume that when a neuron spikes, its interactions with other neurons are instantaneous, so that we assume for simplicity in this model that the total activity of the network is simply given by the value of $N(t)$. The crucial nonlinearity is given by the function $p\colon[0,\infty)\times[0,\infty)\to[0,\infty)$, which is called the firing coefficient and it describes the susceptibility of neurons to spike. We assume that $p$ depends on the elapsed time $s$ and the activity $N$ and without loss of generality we consider that $p\in W^{1,\infty}(\R^+\times\R^+)$, though the regularity is not a crucial assumption as we will see in the numerical examples. We stick to these names of each term of the system, though they may differ in the literature. 

In this setting, neurons discharge at the rate given by $p$ and then the elapsed time is immediately reset to zero, as stated by the integral boundary condition of $n$ at $s = 0$. Following the terminology of age-structured equations, the elapsed time corresponds to the "age" of neuron. When a neuron discharges, it is considered to "die" where $p(s,N)n$ is the corresponding "death" term. After a neuron spikes, it instantaneously re-enter the cycle and it is considered to be "reborn" with the boundary condition at $s = 0$ representing the "birth" term.

Moreover, we assume that the coefficient $p$ is increasing with respect to the age $s$, which means that neurons are more prone to spike when the elapsed time since last discharge is large. According to the dependence of the coefficient $p$ on the total activity different regimes are possible. When $p$ is increasing with respect to $N$ we say that the network is excitatory, which means that under a high activity neurons are more susceptible to discharge. Similarly, when $p$ is decreasing we say that the network is inhibitory and we have the opposite effect on the network. Moreover, if the following conditions holds:
\begin{equation}
    \label{weakinter}
    \|\partial_N p\|_\infty<1,
\end{equation}
we say that the network is under a weakly interconnected regime, which means that the nonlinearity is weak. 

For the initial data $n^0\in L^1(\R^+)$ we assume that 
$n(t,s)$
is a probability density and we formally have the following mass-conservation property
\begin{equation}
\label{massconservation}
    \int_0^\infty n(t,s)\,\dd s = \int_0^\infty n^0(s)\,\dd s,\qquad\forall t\ge 0,
\end{equation}
which will be crucial in the analysis of \eqref{eq:model} equation. Throughout this article we consider solutions in the weak sense but for simplicity we simply refer to them as solutions.

Concerning stationary solutions of the nonlinear System \eqref{eq:model}, these are given by the solutions of the problem

\begin{equation}
\label{eqest1}
\begin{cases}
\partial_s n+p(s,N)n=0&s>0,\vspace{0.15cm}\\
N=n(s=0)\coloneqq\int_0^\infty p(s,N)n(s)\,\dd s,&\vspace{0.15cm}\\
\int_0^\infty n(s)\,\dd s=1,\quad n(s)\ge0.   
\end{cases}
\end{equation}

If the activity $N$ is given, we can determine the stationary density through the formula
\begin{equation}
\label{solest1}
n(s)=Ne^{-\int_0^sp(u,N)\,\dd u}.
\end{equation}
Thus by integrating with respect to $s$, we get that $(n,N)$ corresponds to a stationary solution of System \eqref{eq:model} if the activity satisfies the fixed-point equation

\begin{equation}
	\label{fixN1}
	N=F(N)\coloneqq\left(\int_0^\infty e^{-\int_0^sp(u,N)\,\dd u}\,\dd s\right)^{-1}.
\end{equation}

So that depending on the coefficient $p$, we have a unique solution in the inhibitory case and in the weak interconnections regime. For the excitatory case we may have multiple steady-states.

We also remark that a prototypical form of the function $p$ is given by
\begin{equation}
\label{p-esp}
p(s,N)=\varphi(N)\chi_{\{s>\sigma\}},
\end{equation}
which represents a firing coefficient with an absolute refractory period $\sigma>0$ so that for an age $s<\sigma$ neurons are not susceptible to discharge. For $s>\sigma$ neurons are able to discharge and the density $n$ decays exponentially according to $\varphi(N)$. The function $\varphi$ is assumed to be smooth and satisfies the following bounds
\begin{equation*}
p_0\le\varphi(N)\le p_1, \qquad\forall N\ge0,
\end{equation*}
for some constants $p_0,p_1>0$. This special case has been studied in \cite{pakdaman2009dynamics,torres2021elapsed}, where they proved convergence for the inhibitory case and constructed periodic solutions for the excitatory case. Another possible example is to consider a variable refractory period depending on the total activity
\begin{equation}
\label{p-esp2}
    p(s,N)=\chi_{\{s>\sigma(N)\}}.
\end{equation}
This type of firing coefficient was studied in \cite{pakdaman2013relaxation}, where existence of periodic solutions was also investigated.

One of the delicate issues of the case with instantaneous transmission is well-posedness. Indeed, if we look for a solution $n\in\mathcal{C}([0,T],L^1(\R^+))$ for some $T>0$, we observe that $N(0)$ satisfies the following equation

\begin{equation}
\label{eqN0}
    N(0)=\int_0^\infty p(s,N(0))n^0(s)\,\dd s.
\end{equation}

In the inhibitory case and the weak interconnections regime this equation has a unique solution for $N(0)$, while for the excitatory case we may have multiple solutions and thus the solution of \eqref{eq:model} equation is not unique. In addition we remark that in general $N(0)\neq n^0(0)$, which might imply that $n$ has discontinuities along the line $\{(t,s)\in\R^2\colon t=s\}$. 

Furthermore, when the system evolves in time we see that $N(t)$ is a solution of the fixed-point equation for a given $n\in\mathcal{C}([0,T],L^1(\R^+))$

\begin{equation}
\label{fixedNt}
    N(t)=\int_0^\infty p(s,N(t))n(t,s)\,\dd s,
\end{equation}

and depending on the evolution of $n(t,s)$, it is not clear how we can produce a continuous solution of system \eqref{eq:model}. Indeed, the multiplicity of solutions for the same initial data $n^0\in L^1(\R^+)$ and jump discontinuities has been observed in \cite{torres2021elapsed}.
This motivates to revisit well-posedness of the the instantaneous transmission model by studying how the solution of Equation \eqref{fixedNt} changes over time. 

With this idea, we also aim to make a numerical analysis of the nonlinear elapsed-time model by adapting the classical first-order explicit upwind scheme combined with the fixed-point approach of Equation \eqref{fixedNt} in order to deal with the potential ill-posedness of the case with instantaneous transmission. In particular, we will rely on an estimate of the discrete total variation to prove the convergence of the adapted scheme.

Previous studies on the numerical analysis of age-structured equations include those by 
\cite{lopez1991upwind,sulsky1994numerical,abia2005age,iannelli2017basic} and further generalizations to solutions in the space of positive regular measures $\mathcal{M}^+(\R^+)$ have been investigated by \cite{carrillo2014splitting,gwiazda2010nonlinear,ackleh2019finite} through the particle method. In particular, the instantaneous transmission equation can be regarded as a singular case of the general autonomous nonlinear age-structured model
\begin{equation}
  \begin{cases}
\partial_t n+\partial_s n+\mu(s,\Phi(t))n=0,&t>0,\,s>0,\vspace{0.15cm}\\
N(t)\coloneqq n(t,0)=\int_0^\infty \beta(s,\Phi(t))n(t,s)\,\dd s,&t>0,\vspace{0.15cm}\\
n(0,s)=n^0(s),\:\Phi(t)=\int_0^\infty\gamma(s)n(t,s)\,ds, & s\ge 0,   
\end{cases}  
\end{equation}
where $\beta,\mu\in L_+^\infty(\R^+\times\R^+)$ are birth and death coefficients respectively, while the $\gamma\in L_+^\infty(\R^+)$ is the niche coefficient with $\Phi(t)$ as the total occupation of the niche. In this context, \eqref{eq:model} equation formally corresponds to $\mu\equiv\beta$ and $\gamma(s)=\delta_0(s)$, a Dirac mass. This singularity motivates us to adapt the classical upwind scheme in order to deal with the fixed-point equation \eqref{fixedNt}.

On the other hand, one way to avoid these issues on the well-posedness of \eqref{eq:model} equation is to consider from a biophysical point of view that when neurons spike there exists a delay in the transmission to other neurons. In order to take into account this effect, 
\cite{pakdaman2009dynamics} considered a modification of the elapsed time model incorporating a distributed delay, which corresponds to the following variant of \eqref{eq:model} equation that we will call as \textit{Distributed Delay Model} (DDM)
\begin{equation}\label{eq:delay}
\tag{DDM}
\begin{cases}
    \partial_tn+\partial_s n+p(s,X(t))n = 0,  & t>0,\: s>0, \\
    N(t)=n(t,s=0)= \int_0^{+\infty}p(s,X(t))n(t,s) \dd s,& t>0,\\
    X(t)=\int_0^t \alpha(t-\tau)N(\tau)\,\dd \tau, & t>0,\\
    n(0,s)=n^0(s)\geq0,& s\geq 0,\\
    \int_0^{\infty}n^0(s)\dd s=1. &
\end{cases}
\end{equation}
The kernel $\alpha\in L^1(\R^+)$ with $\alpha\ge0$, corresponds to the distributed delay and for simplicity we may assume that $\alpha$ is smooth and uniformly bounded with $\int_0^\infty\alpha(\tau)\dd \tau=1$, but the theoretical results are still valid if we only assume that $\alpha$ is integrable. For this model $N(t)$ is the flux of discharging neurons and $X(t)$ is the total activity, which depends on the values taken by $N$ in the past (i.e. in the interval $[0,t]$) through the convolution with $\alpha$. Unlike the instantaneous transmission model, the coefficient $p$ depends on the total activity $X(t)$ instead of the discharging flux. Properties such as mass conservation remain valid for this modified model. We remark that under the condition $\int_0^\infty\alpha(\tau)\dd \tau=1$, we consider the steady states of equation \eqref{eq:delay} to be the same as those of \eqref{eq:model}.

Like the case of Equation \eqref{fixedNt} in the \eqref{eq:model} system, we obtain the analogous fixed-point problem for $X(t)$ for a given density $n\in\mathcal{C}([0,T],L^1(\R^+))$
\begin{equation}
 X(t)=\int_0^t\int_0^\infty\alpha(t-\tau)p(s,X(\tau))n(\tau,s)\,\dd s\,\dd \tau,    
\end{equation}
so that we can also extend the fixed-point approach to the \eqref{eq:delay} equation in order to extend the numerical analysis of the \eqref{eq:model} equation.

When the kernel $\alpha(t)$ is smooth, the distributed delay model equation corresponds to a regularized version in time of \eqref{eq:model} equation, through an approximation of the $N(t)$. In particular when $\alpha(t)$ approaches in the sense of distributions to the Dirac's mass $\delta(t)$, then we formally get $X(t)=N(t)$ and thus we recover the case with instantaneous transmission \eqref{eq:model}. In this context, an interesting question is to determine how good a simplification it is to assume instantaneous transmission in the dynamics of interconnected spiking neurons.

Another important example studied in \cite{pakdaman2009dynamics} is the exponential delay given by $\alpha(t)=\frac{1}{\lambda}e^{-t/\lambda}$ so that $X(t)$ satisfies the following differential equation

\begin{equation}
\label{edo-X}
    \begin{cases}
        \lambda X'(t) +X(t)=N(t),\\
        X(0)=0.
    \end{cases}
\end{equation}
giving a simple way to compute numerical solution of this system.

Similarly, when $\alpha(t)$ approaches in the sense of distributions to the Dirac's mass $\delta(t-d)$, then we formally get $X(t)=N(t-d)$ and we recover a version of the classical elapsed time equation with a single discrete delay $d$

\begin{equation}\label{eq:delaypuro}
\begin{cases}
    \partial_tn+\partial_s n+p(s,N(t-d))n = 0,  & t>0,\: s>0, \\
    N(t)=n(t,s=0)= \int_0^{+\infty}p(s,N(t-d))n(t,s) \dd s,& t>0,\\
    n(0,s)=n^0(s)\geq0,& s\geq 0,\\
    \int_0^{\infty}n^0(s)\dd s=1.
\end{cases}
\end{equation}

Given the flexibility of the the distributed delay model through the kernel $\alpha$, in this article we will focus on the \eqref{eq:model} and \eqref{eq:delay} equations, but the results and techniques presented in this work 
are also valid for Equation \eqref{eq:delaypuro}.

\begin{remark}
\label{past}
    The most general form of the equation \eqref{eq:delay} is when the total activity satisfies the following equation:
    \begin{equation*}
    X(t)=\int_{-\infty}^t\alpha(t-s)N(s)\,ds,
    \end{equation*}
    where $N(t)=N^0(t)$ for a given function $N^0$ defined when $t<0$, that acts as an extra initial condition for the distributed delay problem. As it was considered in \cite{pakdaman2009dynamics}, we assume for simplicity that there is no activity in the past, i.e. $X(t)=N(t)=0$ for $t<0$. The results on existence and the corresponding numerical analysis are easily adapted for a given past $N^0$ with compact support in $(-\infty,0]$.
\end{remark}

This article is devoted to revisiting well-posedness in order to adapt the classical upwind scheme by incorporating fixed-point arguments and giving a unified analysis of well-posedness for both \eqref{eq:model} and \eqref{eq:delay} equations, improving the proofs given in \cite{pakdaman2009dynamics,canizo2019asymptotic}. In the work of 
\cite{pakdaman2009dynamics} they proved well-posedness when the coefficient $p$ is of the form given by \eqref{p-esp} or \eqref{p-esp2} under some asymptotic conditions in the growth of $p$ with respect to the discharging flux $N$ (or the total activity $X$ in the case of distributed delay), while in the work of Cañizo et al. \cite{canizo2019asymptotic} they mainly focused in the weak interconnections regime for \eqref{eq:model} equation.

The article is organized as follows. In Section \ref{ITM}, we study the instantaneous transmission model by revisiting well-posedness in the inhibitory case (including the weak interconnections case as well) and explaining how the arguments can be extended for the general excitatory case. Then we proceed to explain the scheme to solve numerically equation \ref{eq:model} and prove the necessary BV estimates that ensure the convergence of the numerical method. In Section \ref{DDM}, we make an analogous analysis for the case with distributed delay by adapting the ideas and arguments applied to the the case with instantaneous transmission. Finally, in Section \ref{Simu}, we present numerical simulations to compare both models under different parameters choices, including the inhibitory and excitatory regime. In particular, we consider different types of delay kernel $\alpha$ in order to observe the limit cases when $\alpha(t)$ is approaching a Dirac mass and the possible asymptotic behavior thereof. This extends the numerical simulations made by 
\cite{pakdaman2009dynamics}, where they consider mainly the exponential kernel in the case with distributed delay.

\section{Instantaneous Transmission Model (ITM)}
\label{ITM}

\subsection{Well-posedness of ITM revisited}
In this subsection we prove that the solution of \eqref{eq:model} equation is well-posed in the inhibitory case and the weak interconnections regime. We improve the ideas of 
\cite{pakdaman2009dynamics} by considering more general forms for the coefficient $p$ and we improve the result of the weak interconnection regime of \cite{canizo2019asymptotic} by extending the existence and uniqueness of a solution when we drop the absolute value in condition \eqref{weakinter}. The main idea of the proof is to propose the appropriate fixed-point problem that eventually leads to a solution of \eqref{eq:model} through the contraction principle.

\begin{theorem}
\label{wellp-nodelay}
     Consider a nonnegative $n^0\in L^1(\R^+)$. Assume that $p\in W^{1,\infty}(\R^+\times\R^+)$ and let $\gamma \coloneqq \sup_{s,N}\partial_N p(s,N)$ with $\gamma\|n^0\|_1<1$, then \eqref{eq:model} equation has a unique solution $n\in \mathcal{C}\left([0,\infty),L^1(\R^+)\right)$ and $N\in\mathcal{C}[0,\infty)$. Moreover $n$ satisfies the property of mass conservation \eqref{massconservation}.
\end{theorem}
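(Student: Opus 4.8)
The plan is to set up a fixed-point problem for the activity $N(t)$ and apply the Banach contraction principle on a suitable space, then recover $n$ by the method of characteristics. The key observation motivating the invertibility condition $\gamma\|n^0\|_1<1$ is that, given any candidate density $n\in\mathcal{C}([0,T],L^1(\R^+))$ with $\|n(t,\cdot)\|_1\le\|n^0\|_1$, the map $m\mapsto\int_0^\infty p(s,m)n(t,s)\,\dd s$ is a contraction in $m$ uniformly in $t$: its Lipschitz constant is bounded by $\sup_{s,m}\partial_N p(s,m)\cdot\|n(t,\cdot)\|_1\le\gamma\|n^0\|_1<1$. Hence equation \eqref{fixedNt} has for each $t$ a unique solution $N(t)=\mathcal{N}[n](t)$, and one checks it depends continuously on $t$ (and even Lipschitz-continuously, given $W^{1,\infty}$ regularity of $p$ and control on $\partial_t$ of the integrand via the transport equation).

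Next I would build the solution operator. For fixed $T>0$, work in the closed set
\[
\mathcal{E}_T=\Bigl\{n\in\mathcal{C}([0,T],L^1(\R^+))\ :\ n\ge0,\ \|n(t,\cdot)\|_1\le\|n^0\|_1,\ n(0,\cdot)=n^0\Bigr\},
\]
equipped with the sup-in-time $L^1$ norm. Given $n\in\mathcal{E}_T$, define $N=\mathcal{N}[n]$ as above, and then let $\Gamma[n]$ be the solution of the \emph{linear} transport problem
\[
\partial_t\tilde n+\partial_s\tilde n+p(s,N(t))\tilde n=0,\qquad \tilde n(t,0)=N(t),\qquad \tilde n(0,s)=n^0(s),
\]
which is solved explicitly along characteristics: $\tilde n(t,s)=n^0(s-t)\exp\!\bigl(-\int_0^t p(s-t+\sigma,N(\sigma))\,\dd\sigma\bigr)$ for $s>t$ and $\tilde n(t,s)=N(t-s)\exp\!\bigl(-\int_0^s p(\sigma,N(t-s+\sigma))\,\dd\sigma\bigr)$ for $s<t$. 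A fixed point of $\Gamma$ is precisely a solution of \eqref{eq:model}. One verifies $\Gamma$ maps $\mathcal{E}_T$ into itself: nonnegativity and the boundary value are immediate from the formula, and mass conservation (hence the $L^1$ bound) follows by integrating the equation in $s$ and using that the loss term $\int_0^\infty p\,\tilde n\,\dd s$ equals the gain term $\tilde n(t,0)=N(t)$ by the very definition of $N$ as the fixed point of \eqref{fixedNt}.

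The contraction estimate is the heart of the argument. For $n_1,n_2\in\mathcal{E}_T$ with activities $N_1,N_2$, I would first estimate $\|N_1-N_2\|_{\infty,[0,T]}$ in terms of $\|n_1-n_2\|$: subtracting the two fixed-point identities, moving the $\partial_N p$ term to the left and dividing by $1-\gamma\|n^0\|_1$ gives $\|N_1(t)-N_2(t)\|\le \tfrac{\|p\|_\infty}{1-\gamma\|n^0\|_1}\,\|n_1(t,\cdot)-n_2(t,\cdot)\|_1$; this is exactly where the invertibility hypothesis is used, and it is the step I expect to be most delicate to make fully rigorous, since it also requires the Lipschitz-in-$t$ control needed earlier. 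Then, comparing the characteristic formulas for $\Gamma[n_1]$ and $\Gamma[n_2]$ and using $p\in W^{1,\infty}$, one gets a bound of the form $\|\Gamma[n_1](t,\cdot)-\Gamma[n_2](t,\cdot)\|_1\le C\!\int_0^t\bigl(\|n_1(\sigma,\cdot)-n_2(\sigma,\cdot)\|_1+|N_1(\sigma)-N_2(\sigma)|\bigr)\,\dd\sigma \le C'T\,\|n_1-n_2\|_{\mathcal{E}_T}$, with $C'$ depending only on $\|p\|_{W^{1,\infty}}$, $\gamma$ and $\|n^0\|_1$. Choosing $T$ small enough that $C'T<1$ yields a unique local solution; since $C'$ does not depend on the initial time nor deteriorate (mass is conserved, so $\|n(t,\cdot)\|_1=\|n^0\|_1$ for all $t$), the same $T$ works at each step and the solution extends to $[0,\infty)$. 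Continuity of $N$ and mass conservation come for free from the construction. Finally I would note that when $p$ is increasing in $N$ one has $\gamma=\|\partial_N p\|_\infty$ in the excitatory case and $\gamma\le0$ in the inhibitory case (so the condition is automatic there, with the $L^1$ bound still giving the needed contraction via $|\gamma|$), which is the sense in which this improves on assuming $\|\partial_N p\|_\infty<1$.
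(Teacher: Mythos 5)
Your overall architecture is sound and close in spirit to the paper's: a Lipschitz estimate for the map $n\mapsto N$ solving \eqref{fixedNt}, the method of characteristics for the linear problem, a contraction on a short time interval, and global extension via mass conservation. (The paper iterates on the activity $N\in\mathcal{C}[0,T]$ rather than on the density $n$, but that difference is cosmetic.) However, there is one genuine gap, and it sits exactly at the point the theorem is designed to improve: you claim that for fixed $t$ the map $m\mapsto\int_0^\infty p(s,m)\,n(t,s)\,\dd s$ is a contraction because its Lipschitz constant is bounded by $\sup_{s,m}\partial_N p\cdot\|n^0\|_1=\gamma\|n^0\|_1<1$. That is false: the Lipschitz constant of this map is controlled by $\|\partial_N p\|_\infty\,\|n^0\|_1$, i.e.\ by the supremum of $|\partial_N p|$, whereas $\gamma$ is the \emph{signed} supremum. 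In the strongly inhibitory case (e.g.\ Example 1 of the paper, $\varphi(N)=e^{-9N}$) one has $\gamma=0$ so the hypothesis $\gamma\|n^0\|_1<1$ holds trivially, yet $\|\partial_N p\|_\infty\|n^0\|_1=9$ and the map is nowhere near a contraction. Your closing remark that the ``$L^1$ bound still gives the needed contraction via $|\gamma|$'' in the inhibitory case does not repair this, since $|\partial_N p|$ can be arbitrarily large there. As written, your argument only proves the theorem under the weak-interconnection condition $\|\partial_N p\|_\infty\|n^0\|_1<1$, which is the result of Ca\~nizo et al.\ that the paper explicitly sets out to strengthen.

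The fix is the paper's Lemma \ref{fixedpoint-itm}: instead of contracting, observe that $g(N)=N-\int_0^\infty p(s,N)n(t,s)\,\dd s$ is strictly increasing because $g'(N)\ge 1-\gamma\|n^0\|_1>0$, while $F(N)=\int p(s,N)n\,\dd s$ is continuous and takes values in $[0,\|p\|_\infty\|n^0\|_1]$; existence and uniqueness of $N(t)$ then follow from the intermediate value theorem and monotonicity, and the Lipschitz dependence of $N$ on $n$ in $L^1$ (with constant $\|p\|_\infty/(1-\gamma\|n^0\|_1)$) follows from the implicit function theorem --- or, equivalently, from the mean-value-theorem manipulation you sketch when you ``move the $\partial_N p$ term to the left and divide by $1-\gamma\|n^0\|_1$,'' which does survive with the signed $\gamma$. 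Once the solvability of \eqref{fixedNt} is established this way, the remainder of your argument (self-map on $\mathcal{E}_T$, characteristics estimates, $C'T<1$, iteration in time) goes through essentially as in the paper.
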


One key remark on the well-posedness of \eqref{eq:model} equation in the general case is the following. 

\begin{remark}
    We note that the regularity of the coefficient $p$ is not fundamental for the proof and Theorem \ref{wellp-nodelay} is still valid for a wider class of functions such as 
  \begin{equation*}
  p(s,N)=\varphi(N)\chi_{\{s>\sigma(N)\}},      
  \end{equation*}
with Lipschitz bounded functions $\varphi$ and $\sigma$, as they are studied in \cite{pakdaman2009dynamics,torres2021elapsed}. So under a similar conditions for the inhibitory and weakly excitatory regimes, we can apply the arguments used in Theorem \ref{wellp-nodelay} to get well-posedness in these cases.

    Moreover, the proof of the Theorem \ref{wellp-nodelay} can be replicated in the general excitatory case to prove the existence of solutions with $n\in\mathcal{C}\left([0,T],L^1(\R^+)\right)$ for some $T>0$. Indeed, from the proof Lemma \ref{fixedpoint-itm} we can apply the implicit function theorem as long as the following invertibility condition holds
    \begin{equation}\label{cond:inverti}
     \Psi(N(t),n(t,\cdot)):= 1-\int_0^\infty \partial_N p(s,N(t))n(t,s)\dd s  \neq 0,\qquad\forall t\in [0,T],  
    \end{equation}
    where $\Psi:\R^+\times L^1(\R^+)\longmapsto\R$,
    so that we obtain existence of a solution (or possible branches of solutions) of \eqref{eq:model} equation defined locally in time by applying the arguments in the proof of Theorem \eqref{wellp-nodelay}. If $\Psi(N(t^*),n(t^*,\cdot))=0$ for some $t^*>0$, then the continuity of solutions is not ensured and jump discontinuities might arise. We explore this aspect in the section of numerical simulations.
\end{remark}

For the proof we need the following lemmas, which will be the key idea throughout this article. We start with following result on the linear case.

\begin{lemma}
Assume that $n^0\in L^1(\R^+)$ and $p\in L^\infty(\R^+\times\R^+)$. Then for a given $N\in\mathcal{C}[0,T]$, the linear equation
\begin{equation}
\label{linearaux}
\begin{cases}
    \partial_tn+\partial_s n+p(s,N(t))n = 0,  & t>0,\: s>0, \\
    n(t,s=0)=\int_0^{+\infty}p(s,N(t))n(t,s) \dd s,& t>0,\\
    n(0,s)=n^0(s)\geq0,& s\geq 0,
    \end{cases}    
\end{equation}
has a unique weak solution $n\in\mathcal{C}([0,T],L^1(\R^+))$. Moreover $n$ is nonnegative and verifies the mass conservation property \eqref{massconservation} for $t\in[0,T]$.
\end{lemma}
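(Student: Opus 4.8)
The plan is to solve the transport equation by the method of characteristics and then close the boundary condition via a fixed-point (renewal) argument. Since $N \in \mathcal{C}[0,T]$ is given, the coefficient $(t,s)\mapsto p(s,N(t))$ is a fixed bounded function, so along the characteristics $s - t = \text{const}$ the density satisfies a linear ODE whose solution is an explicit exponential decay factor $E(t,s) \coloneqq \exp\!\left(-\int_{\max(0,s-t)}^{s} p(\sigma, N(\sigma - s + t))\,\dd\sigma\right)$ (with the appropriate shift so that the integration is along the characteristic). Writing $B(t) \coloneqq n(t,0)$ for the unknown boundary trace, the representation formula reads: for $s \ge t$, $n(t,s) = n^0(s-t)\,E(t,s)$ (characteristics emanating from the initial data), and for $s < t$, $n(t,s) = B(t-s)\,E(t,s)$ (characteristics emanating from the boundary). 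Substituting this into the boundary condition $B(t) = \int_0^\infty p(s,N(t))\,n(t,s)\,\dd s$ yields a linear Volterra integral equation of the second kind for $B$ on $[0,T]$, of the form $B(t) = G(t) + \int_0^t K(t,\tau) B(\tau)\,\dd\tau$, where $G$ collects the contribution of the initial data and $K$ is a bounded kernel because $p\in L^\infty$.

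The key steps, in order, are: (i) derive the representation formula above rigorously in the weak sense, checking that it defines an element of $\mathcal{C}([0,T],L^1(\R^+))$ once $B$ is known to be in $L^\infty(0,T)$ (or even just $L^1$); (ii) establish existence and uniqueness of $B$ for the Volterra equation — this follows from a standard contraction/Picard iteration on $\mathcal{C}[0,T]$, or from iterating the Volterra operator whose $n$-th power has norm $O((C t)^n / n!)$, so no smallness on $T$ is needed; (iii) verify that the resulting $n$ is a genuine weak solution of \eqref{linearaux} by testing against smooth compactly supported functions and using the characteristics; (iv) prove nonnegativity: since $n^0 \ge 0$ and $E \ge 0$, nonnegativity of $n$ reduces to nonnegativity of $B$, which I get from the Volterra equation because $G \ge 0$ and $K \ge 0$ (both $p$ and $E$ are nonnegative), so the Picard iterates stay nonnegative and pass to the limit; (v) prove mass conservation by integrating the PDE in $s$: formally $\frac{\dd}{\dd t}\int_0^\infty n\,\dd s = -\int_0^\infty \partial_s n\,\dd s - \int_0^\infty p n\,\dd s = n(t,0) - \int_0^\infty p n \,\dd s = 0$ by the boundary condition, and this is made rigorous by testing with a cutoff in $s$ and using the representation formula to control the flux at infinity (the tail of $n^0$ and the decay factor handle this).

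The main obstacle I expect is the rigorous weak-solution verification in step (iii) together with the continuity-in-time statement $n \in \mathcal{C}([0,T],L^1(\R^+))$: one must check carefully that the two branches of the representation formula match up along the diagonal $s = t$ only in the $L^1$ sense (there may be a jump in the pointwise values if $n^0(0^+) \neq B(0)$, but this does not affect $L^1$-continuity), and that translations of $n^0$ in $L^1$ are continuous, which gives continuity of the initial-data branch, while continuity of the boundary branch follows from continuity of $B$. Everything else — the Volterra theory, nonnegativity, mass conservation — is routine once the representation formula is in hand, and the uniqueness part follows by applying the same Volterra estimate to the difference of two solutions with zero data.
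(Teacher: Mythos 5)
Your plan is correct and is exactly the standard renewal-equation argument for the linear problem; the paper itself gives no proof here, deferring to the linear theory of Perthame (2006), Pakdaman et al.\ (2009) and Ca\~nizo et al.\ (2019), which is precisely the characteristics-plus-Volterra construction you describe (and the same representation formula reappears verbatim in the paper's proof of Theorem \ref{wellp-nodelay}). The only point worth making explicit is in the uniqueness step: to run the Volterra estimate on the difference of two weak solutions you must first show that \emph{every} weak solution, not just the one you construct, satisfies the representation formula along characteristics --- for this constant-speed transport with bounded absorption this is a routine duality argument (test against solutions of the backward adjoint equation), but it is a separate step from verifying that your constructed $n$ is a weak solution.
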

\begin{proof}
    For a proof see the linear theory developed in \cite{perthame2006transport,pakdaman2009dynamics,canizo2019asymptotic}.
\end{proof}

By using the implicit function theorem, we prove the following lemma that is the keystone in the proof of our theorem.

\begin{lemma}
\label{fixedpoint-itm}
    Consider a nonnegative function $n^0\in L^1(\R^+)$. Let $\gamma \coloneqq \sup_{s,N}\partial_N p(s,N)$ and assume that $\gamma\|n^0\|_1<1$. Then there exists a unique solution for $N$ of the equation
    \begin{equation}
    \label{eqfix-N}
        N=F(N)\coloneqq \int_0^\infty p(s,N)n^0(s)\,\dd s,
    \end{equation}
    that we call $N\coloneqq\psi(n^0)$, where the map $\psi\colon L^1(\R^+)\to\R$ satisfies the following estimate
    \begin{equation}
    \label{psi-lip-l1}
        |\psi(n^1)-\psi(n^2)|\le \frac{\|p\|_\infty}{1-\gamma\|n^0\|_1}\int_0^\infty|n^1-n^2|(s)\,\dd s,
    \end{equation}
    for nonnegative integrable functions $n^1,n^2$ with $\|n^1\|_1=\|n^2\|_1=\|n^0\|_1$.
\end{lemma}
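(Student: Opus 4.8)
The plan is to prove the existence and uniqueness of $N$ via the implicit function theorem applied to the map $G(N,n) := N - \int_0^\infty p(s,N)n(s)\,\dd s$, and then to extract the Lipschitz estimate \eqref{psi-lip-l1} directly from the fixed-point equation. First I would fix $n := n^0$ and study the scalar equation $G(N,n^0)=0$. Since $p\in W^{1,\infty}$, the function $N\mapsto F(N) = \int_0^\infty p(s,N)n^0(s)\,\dd s$ is Lipschitz with constant at most $\|\partial_N p\|_\infty\|n^0\|_1 \le \gamma^+\|n^0\|_1$; but we only want a contraction in the relevant one-sided sense. The cleanest route is to observe that $F'(N) = \int_0^\infty \partial_N p(s,N)n^0(s)\,\dd s \le \gamma\|n^0\|_1 < 1$, so $N - F(N)$ is strictly increasing (its derivative is $1 - F'(N) \ge 1-\gamma\|n^0\|_1 > 0$); combined with the bound $0\le F(N)\le \|p\|_\infty\|n^0\|_1$, which keeps $F$ in a bounded interval, the intermediate value theorem gives a unique root. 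This is where dropping the absolute value in \eqref{weakinter} matters: we only need the upper bound $\gamma\|n^0\|_1<1$ on $F'$, not a two-sided bound, because a large negative $F'$ (strongly inhibitory) only makes $N-F(N)$ more steeply increasing.

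Next I would establish the Lipschitz dependence on the density. Let $n^1,n^2$ be nonnegative with $\|n^1\|_1=\|n^2\|_1=\|n^0\|_1$ and set $N^i := \psi(n^i)$, so $N^i = \int_0^\infty p(s,N^i)n^i(s)\,\dd s$. Subtracting the two fixed-point identities and inserting the telescoping term $\int_0^\infty p(s,N^1)n^2(s)\,\dd s$, I would write
\begin{equation*}
N^1 - N^2 = \int_0^\infty \bigl(p(s,N^1)-p(s,N^2)\bigr)n^2(s)\,\dd s + \int_0^\infty p(s,N^1)\bigl(n^1-n^2\bigr)(s)\,\dd s.
\end{equation*}
The first integral is bounded in absolute value by $\gamma^+\|n^0\|_1\,|N^1-N^2|$ using the mean value theorem in $N$ and $\|n^2\|_1=\|n^0\|_1$ (more precisely, using $\partial_N p \le \gamma$ together with a sign-tracking argument so that only the constant $\gamma$, not $|\gamma|$, enters); the second is bounded by $\|p\|_\infty \int_0^\infty|n^1-n^2|(s)\,\dd s$. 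Moving the $|N^1-N^2|$ term to the left and dividing by $1-\gamma\|n^0\|_1>0$ yields exactly \eqref{psi-lip-l1}. The only subtlety is ensuring the first term really contributes with coefficient $\gamma\|n^0\|_1$ rather than $\|\partial_N p\|_\infty\|n^0\|_1$; this follows by the same one-sided monotonicity trick used for existence, e.g.\ by testing the sign of $N^1-N^2$ and estimating $(p(s,N^1)-p(s,N^2))$ accordingly, so the worst case is governed by $\sup\partial_N p = \gamma$.

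The main obstacle I anticipate is precisely this one-sided bookkeeping: one must resist the temptation to pass to absolute values prematurely, since that would reintroduce $\|\partial_N p\|_\infty$ and lose the improvement over \cite{canizo2019asymptotic}. The way to handle it cleanly is to do a short case analysis on the sign of $N^1-N^2$ (and similarly for existence, to exploit strict monotonicity of $N\mapsto N-F(N)$), so that in each case the relevant difference $p(s,N^1)-p(s,N^2)$ is controlled by $\gamma$ times $|N^1-N^2|$ with the correct sign. Everything else — boundedness of $F$, continuity of the integrals in $N$ (from $p\in W^{1,\infty}$ and dominated convergence), and the final algebra — is routine. I would also remark that the map $\psi$ is well-defined on the whole set $\{n\ge 0 : \|n\|_1 = \|n^0\|_1\}$, not just at $n^0$, since the condition $\gamma\|n\|_1 < 1$ only depends on the mass, which is why the estimate \eqref{psi-lip-l1} can later be iterated along the flow where mass is conserved.
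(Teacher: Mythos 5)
Your proposal is correct. The existence--uniqueness half is identical to the paper's: boundedness $0\le F(N)\le\|p\|_\infty\|n^0\|_1$ plus strict monotonicity of $N\mapsto N-F(N)$, whose derivative is $1-\int_0^\infty\partial_N p(s,N)n^0(s)\,\dd s\ge 1-\gamma\|n^0\|_1>0$, with exactly the one-sided observation you emphasize. Where you diverge is the Lipschitz estimate: the paper invokes the implicit function theorem to get differentiability of $\psi$ on $\{n:\gamma\|n\|_1<1\}$, computes $D\psi[h]=\int_0^\infty p(s,N)h\,\dd s\,\big/\,\bigl(1-\int_0^\infty\partial_N p(s,N)n^0\,\dd s\bigr)$, bounds its operator norm by $\|p\|_\infty/(1-\gamma\|n^0\|_1)$, and then asserts the estimate ``readily follows''; strictly speaking this last step needs a mean-value inequality along the segment $tn^1+(1-t)n^2$, which stays in the admissible set because nonnegativity makes the $L^1$ norm affine. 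Your route --- subtracting the two fixed-point identities, telescoping, and factoring out $N^1-N^2$ so that $(N^1-N^2)\bigl(1-\int_0^\infty\partial_N p(s,\xi_s)n^2\,\dd s\bigr)=\int_0^\infty p(s,N^1)(n^1-n^2)\,\dd s$ with the bracket bounded below by $1-\gamma\|n^0\|_1$ --- is more elementary, avoids the path argument entirely, and makes the one-sided use of $\gamma$ completely transparent (the sign case analysis you describe is subsumed by this factoring, since once the bracket is known to be positive you may take absolute values freely). Both arguments buy the same constant; yours is arguably the cleaner justification of \eqref{psi-lip-l1}, while the paper's IFT formulation is what it later reuses to state the invertibility condition \eqref{cond:inverti} in the excitatory case, so it earns its keep there.
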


\begin{proof}
    Observe that $F$ is a continuous and bounded with respect to $N$. Indeed, for all $N$ we have 
    \begin{equation*}0\le F(N)\le \|p\|_\infty\|n^0\|_1,\end{equation*}
    and hence there exists $N\in [0,\|p\|_\infty\|n^0\|_1]$ such that $N=F(N)$. Moreover the function $g(N)=N-F(N)$ is strictly increasing. Indeed,
    \begin{equation*}g'(N)=1-F'(N)=1-\int_0^\infty\partial_N p(s,N)n^0(s)\,\dd s\ge 1-\gamma\|n^0\|_1>0,\end{equation*}
    and therefore the $N=F(N)$ has a unique solution that we call $\overline{N}=\psi(\overline{n})$. Consider the set $U$ defined by
    \begin{equation*}U\coloneqq\{n\in L^1(\R^+)\colon \gamma\|n\|_1<1\},\end{equation*}
    and  the map $G\colon X\times\R\to\R$ defined by
    \begin{equation*}G(n,N)=N-\int_0^\infty p(s,N)n^0(s)\,\dd s.\end{equation*}
    Observe that for $(n,N)\in U\times\R$ with $G(n,N)=0$ we get $\partial_N G(n,N)>0$. By the implicit function theorem, we notice that $\psi$ it is a differentiable map on $U$. Moreover $D\psi\colon L^1(\R)\to\R$ at a point $(n,N)\in U\times[0,\infty)$  is given by\begin{equation*}D\psi[h]=\frac{\int_0^\infty p(s,N)h(s)\,\dd s}{1-\int_0^\infty\partial_N p(s,N)n^0(s)\,\dd s},\end{equation*}
    and we have the following estimate in the operator norm at the point $(n^0,N^0)$
\begin{equation*}
    \|D\psi\|\le \frac{\|p\|_\infty}{1-\gamma\|n^0\|_1},
\end{equation*}
    thus for $n^1,n^2$ with the same norm $n^0$, the inequality \eqref{psi-lip-l1} readily follows.
\end{proof}

With this lemma, we continue the proof of the Theorem \ref{wellp-nodelay}.

\begin{proof}
    Consider $T>0$ and a fixed nonnegative $N\in\mathcal{C}[0,T]$. For this function $N$ we define $n[N]\in\mathcal{C}([0,T],L^1(\R^+)$ as the weak solution of the linear equation \eqref{linearaux},
    which can be expressed trough the method of characteristics
    \begin{equation*}
    n(t,s)=n^0(s-t)e^{-\int_0^t p(s-t+t',N(t'))\dd t'}\chi_{\{s>t\}}+N(t-s)e^{-\int_0^s p(s',N(t-s+s'))\dd s'}\chi_{\{t>s\}}.
    \end{equation*}
    From the mass conservation property we know that $\|n[N](t)\|_1=\|n^0\|_1$.
    
    On the other hand, for a given $n\in\mathcal{C}([0,T],L^1(\R^+))$, let $\psi\colon \mathcal{C}([0,T],L^1(\R^+))\to\mathcal{C}[0,T]$ the unique solution $X\in\mathcal{C}[0,T]$ of the equation
       \begin{equation*}
    X=\int_0^\infty p(s,X)n(t,s)\,\dd s.
   \end{equation*}
    Under this setting we get a solution of the nonlinear \eqref{eq:model} equation if only if we find $N\in\mathcal{C}[0,T]$ such that is a solution of the equation
      \begin{equation*}
N=H(N)\coloneqq\psi\left(n[N]\right),
\end{equation*}
    where $H\colon\mathcal{C}[0,T]\to\mathcal{C}[0,T]$.
    
    We assert that for $T$ small enough the map $H$ is a contraction. For nonnegative functions $N_1,N_2\in\mathcal{C}[0,T]$, we get from the method of characteristics we get the following inequality
  \begin{equation*}
    \int_0^\infty |n[N_1]-n[N_2]|(t,s)\,\dd s\le A_1+A_2+A_3,
  \end{equation*}
    where $A_1,A_2,A_3$ are given by
    \begin{eqnarray*}
        A_1&=&\int_0^\infty n^0(s)\left|e^{-\int_0^t p(s+t',N_1(t'))\dd t'}-e^{-\int_0^t p(s+t',N_2(t'))\dd t'}\right|\,\dd s,\\
        A_2&=&\int_0^t |N_1-N_2|(t-s) e^{-\int_0^s p(s',N_1(t-s+s'))\dd s'}\dd s,\\
        A_3&=&\int_0^t N_2(t-s)\left|e^{-\int_0^s p(s',N_1(t-s+s'))\dd s'}-e^{-\int_0^s p(s',N_2(t-s+s'))\dd s'} \right|\dd s.
    \end{eqnarray*}
    We proceed by estimating each term. For simplicity we write $n_1=n[N_1],n_2=n[N_2]$, so that for $A_1$ we have
    $$A_1\le \int_0^\infty n^0(s)\int_0^t|p(s+t',N_1(t'))-p(s+t',N_2(t'))|\,\dd t'\,\dd s \le T\|n^0\|_1 \|\partial_N p\|_\infty \|N_1-N_2\|_\infty,$$
    while for $A_2$ we have
 \begin{equation*}
    A_2\le \int_0^t |N_1-N_2|(t-s) \dd s \le T\|N_1-N_2\|_\infty,
 \end{equation*}
    and for $A_3$ we get
    \begin{equation*}
    \begin{split}
        A_3 &\le \|p\|_\infty\|n^0\|_1 \int_0^t\int_0^s|p(s',N_1(t-s+s'))-p(s',N_2(t-s+s'))|\dd s'\,\dd s\\
        &\le\frac{T^2}{2} \|p\|_\infty\|n^0\|_1\|\partial_N p\|_\infty\|N_1-N_2\|_\infty. 
    \end{split}
    \end{equation*}
    By combining these inequalities we have finally
    \begin{equation}
    \label{nN1-nN2}
       \sup_{t\in [0,T]}\|n_1(t,\cdot)-n_2(t,\cdot)\|_1\le \left(\|n^0\|_1\|\partial_N p\|_\infty+\frac{T}{2}\|p\|_\infty\|n^0\|_1\|\partial_N p\|_\infty+1\right)T\|N_1-N_2\|_\infty.
    \end{equation}

    Since the mass is conserved, we apply Lemma \ref{fixedpoint-itm} to get
    \begin{equation}
    \label{psi1-psi2}
        \sup_{t\in [0,T]}|\psi(n^1(t,\cdot))-\psi(n^2(t,\cdot))|\le \frac{\|p\|_\infty}{1-\gamma\|n^0\|_1}\sup_{t\in[0,T]}\|n^1(t,\cdot)-n^2(t,\cdot)\|_1,  
    \end{equation}
    and therefore, we deduce from \eqref{nN1-nN2} and \eqref{psi1-psi2} that the following estimate holds for $H$
\begin{equation*}
        \|H(N_1)-H(N_2)\|_\infty\le \frac{\|p\|_\infty T}{1-\gamma\|n^0\|_1}\left(\frac{T}{2}\|p\|_\infty\|n^0\|_1\|\partial_N p\|_\infty+\|n^0\|_1\|\partial_N p\|_\infty+1\right)\|N_1-N_2\|_\infty,
\end{equation*}
    so that for $T>0$ small enough we get a unique fixed point of $H$ by contraction principle, implying the existence of a unique solution $n\in \mathcal{C}\left([0,T],L^1(\R^+)\right)$ with $N\in\mathcal{C}[0,T]$ of \eqref{eq:model} equation. Since the mass is conserved, we can iterate this argument to conclude the solution is indeed defined for all $T>0$.
\end{proof}

\subsection{Numerical scheme for ITM}
In this subsection, a first-order explicit upwind scheme for the \eqref{eq:model} equation is introduced based on the finite-volume framework \cite{leveque1992numerical,godlewski1996numerical,bouchut2004nonlinear}. We consider an uniform discretization of $\Omega=[0,T]\times[0,+\infty)$   with cells $I_j=[s_{j-\frac12},s_{j+\frac12})$,  interface points $s_{j+\frac12}=j\Delta s>0$ and centers $s_j=(j-\frac12)\Delta s$, $j\in\N$ such that $[0,+\infty)=\cup_{j\in\N}I_j$ and $I^m=[t^{m-1},t^{m}]$, $t^{m}=m\Delta t>0$, $m\in\N$, $T=M\Delta t$ such that $[0,T]=\cup_{m=0}^M I^m$. Let $n_j^m=\frac{1}{\Delta s}\int_{I_j}n(t^m,s)\dd s$ be the cell average of $n(t,s)$ at time $t^m$ in the cell $I_j$, then applying an explicit finite-volume approximation with an upwind discretization for the convective term in \eqref{eq:model} equation, we obtain 
\begin{equation}
\label{finit-diff}
    \frac{n_j^{m+1}-n_j^m}{\Delta t}+\frac{n_j^m-n_{j-1}^m}{\Delta s}+p(s_j,N(t^{m}))n_j^m=0,\quad j\in\N,\, m\in\N,
\end{equation}
and  
\begin{equation*}
    N(t^m)=n(t^m,0)\approx \Delta s \sum_{j\in\N} p(s_j,N(t^m))n_j^m,\quad m\in\N.
\end{equation*}
Now, if we define $N^m:=N(t^m)$, the solution of the partial differential equation in \eqref{eq:model} can be solved by the explicit upwind scheme
\begin{equation}\label{metexp}
    n_j^{m+1}=n_j^m-\frac{\Delta t}{\Delta s}(n_{j}^m-n_{j-1}^m)-\Delta t p(s_j,N^m)n_j^m,\quad j\in\N,\,m\in\N.
\end{equation}
In particular for $j=1$ we have
\begin{equation*} 
    n_1^{m+1}=n_1^m-\frac{\Delta t}{\Delta s}(n_{1}^m-N^m)-\Delta t p(s_1,N^m)n_1^m.
\end{equation*}
The explicit upwind scheme \eqref{metexp} is stable if the CFL condition hold
\begin{equation*}
    1-\Delta t\left( \frac{1}{\Delta s}+p(s_j,N^m) \right)\geq 0,\quad j\in\N,\,m\in\N.
\end{equation*}

In that regard, the numerical scheme for $n_j^m$ and $N^m$ can be summarized in the following algorithm.

\begin{Alg}{ITM numerical scheme }\label{Alg}
 \begin{itemize}
 \item[] {Input: Approximate initial data $\{n^0_j\}_{j\in\N}$}
 \item[] Solve for $N^0$
    \begin{equation}
    \label{eqfijo-N0}
     N^{0}= \sum_{j\in\N}\Delta s p(s_j,N^{0}) n_j^{0}.
    \end{equation}  
 \item[] Choose $\Delta t$ such that 
 \begin{equation}\label{CFL}
     \Delta t \le \left(\frac{1}{\Delta s}+\|p\|_\infty\right)^{-1}.
 \end{equation}
 \item[] {\bf For} $m\in\N_0$ {\bf do}
 \item[] \hspace{0.5cm} {\bf For}  $j\in\N$ {\bf do}
 \begin{equation*}
     n_j^{m+1} \leftarrow
     \begin{cases}
      n_1^m-\frac{\Delta t}{\Delta s}(n_{1}^m-N^m)-\Delta t p(s_1,N^m)n_1^m & j=1,\\
      n_j^m-\frac{\Delta t}{\Delta s}(n_{j}^m-n_{j-1}^m)-\Delta t p(s_j,N^m)n_j^m & j>1.
     \end{cases}
 \end{equation*}
 \item[] \hspace{0.5cm} {\bf end} 
 \item[] \hspace{0.5cm} Solve for $N^{m+1}$ 
 \begin{equation}
 \label{eqfijo-N}
     N^{m+1}= \sum_{j\in\N}\Delta s p(s_j,N^{m+1}) n_j^{m+1}.
 \end{equation}
 \item[] {\bf end}
 \item[] { Output: Approximate solution $\{n_j^{m+1}\}_{j\in\N}$ and $N^{m+1}$ at time $t^{m+1}=(m+1)\Delta t$}
 \end{itemize} 
\end{Alg}

The solution of Equation \eqref{eqfijo-N} for $N^{m+1}$ can be solved with different numerical methods such as Newton-Raphson, bisection or inverse quadratic interpolation. In particular for the weak interconnections regime, the solution of Equation \eqref{eqfijo-N} $N^{m+1}$ can be approximated in terms of $N^{m}$ through the following formula if $\Delta t$ is small enough:
$$N^{m+1}= \sum_{j\in\N}\Delta s p(s_j,N^{m}) n_j^{m+1}.$$
For simplicity in the estimates we assume that we can compute the solution of Equation \eqref{eqfijo-N} exactly, but the results remain valid if we take into account a specific method to get an approximation.

\begin{remark}
 Analogously to Equation \eqref{fixedNt}, we will prove in this section that Equations \eqref{eqfijo-N0} and \eqref{eqfijo-N} have a unique solution in the inhibitory case and the weak interconnections regime. In the excitatory case, we may have multiple solutions for $N^0$ that lead to different branches of numerical solutions for the ITM equation \eqref{eq:model} defined in some interval of time. Depending on how we calculate the solution of the fixed-point problem \eqref{eqfijo-N}, the numerical method will approximate one of the multiple possible solutions.   
\end{remark}

In order to prove the convergence of the upwind scheme in the Algorithm \ref{Alg}, we follow the ideas of the previous subsection on well-posedness and we prove a BV-estimate that will be the crucial in the analysis. For simplicity we assume that the initial data $n^0$ is compactly supported, but the theoretical results still hold when the initial data $n^0$ vanishes at infinity.

We start with some lemmas that will be useful in the sequel. 
 \begin{lemma}{(\textbf{$L^{1}$}-norm)}
 \label{L1-Mass}Numerical approximation obtained with the Algorithm \ref{Alg} satisfies 
 \begin{equation*}
     \|n^{m}\|_1:=\sum_{j\in\N}\Delta s n_j^m=\|n^{0}\|_1, \quad m\in\N.
 \end{equation*}
 \end{lemma}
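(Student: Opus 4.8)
The plan is to prove the identity by induction on $m$, the inductive step being nothing but the discrete counterpart of the continuous mass-conservation property \eqref{massconservation}: the mass lost through the firing term $p(s,N^m)n^m$ is exactly reinjected at the boundary $s=0$, where the influx is prescribed to equal $N^m$.

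Before the induction I would dispose of a convergence issue. Since $n^0$ is compactly supported and the explicit scheme \eqref{metexp} has a one-sided three-point stencil, the numerical support can spread by at most one cell per time step; hence at each level $t^m$ only finitely many $n_j^m$ are nonzero, so every series $\sum_{j\in\N}\Delta s\,n_j^m$ is in fact a finite sum and the rearrangements below are unconditionally valid. (Under the CFL condition \eqref{CFL} one also has $n_j^m\ge 0$, so these sums are genuine $L^1$-norms, but this is not needed for the identity itself.)

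For the inductive step, suppose $\|n^m\|_1=\|n^0\|_1$. Multiplying the update rule by $\Delta s$ and summing over $j\in\N$ — with the convention $n_0^m:=N^m$, which is exactly what the $j=1$ case of the algorithm encodes — gives
\[
\|n^{m+1}\|_1=\|n^m\|_1-\Delta t\sum_{j\ge 1}\bigl(n_j^m-n_{j-1}^m\bigr)-\Delta t\sum_{j\ge 1}\Delta s\,p(s_j,N^m)\,n_j^m .
\]
The first sum telescopes and, since $n_j^m\to 0$ as $j\to\infty$ and $n_0^m=N^m$, it equals $-N^m$; the second sum equals $N^m$ by the fixed-point equation \eqref{eqfijo-N} defining $N^m$ (resp.\ \eqref{eqfijo-N0} when $m=0$). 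The two $\Delta t\,N^m$ contributions therefore cancel and $\|n^{m+1}\|_1=\|n^m\|_1=\|n^0\|_1$, which closes the induction.

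There is essentially no hard step here; the only points deserving a line of justification are the finiteness of the sums (from the propagation of compact support) and the bookkeeping at $j=1$, namely that the discrete boundary influx is precisely $N^m$ and that, by construction of Algorithm \ref{Alg}, this same $N^m$ equals the discrete firing integral $\sum_{j}\Delta s\,p(s_j,N^m)n_j^m$. If one prefers not to assume compact support, the identical computation works as soon as $n^0$ (hence $n^m$) decays fast enough at infinity for the tail of the telescoping sum to vanish, as already remarked before the statement of the lemma.
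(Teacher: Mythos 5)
Your proof is correct and follows exactly the paper's argument: multiply the update rule \eqref{metexp} by $\Delta s$, sum over $j$, telescope the convective term against the boundary value $N^m$, and cancel it with the discrete firing integral via the fixed-point equation \eqref{eqfijo-N}. The extra care you take with the finiteness of the sums and the $j=1$ bookkeeping is sound but goes beyond what the paper writes down.
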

 \begin{proof}
 Multiply equation \eqref{metexp} by $\Delta s$ and sum over $j\in\N$ and using the boundary condition we obtain 
\begin{equation*}
    \sum_{j\in\N}\Delta s n_j^{m+1}=\sum_{j\in\N}\Delta s n_j^m-\Delta t \left( N^m- \sum_{j\in\N}\Delta s p(s_j,N^m)n_j^m \right).
\end{equation*}
 \end{proof}

\begin{lemma}{(\textbf{$L^{\infty}$}-norm)}
\label{L-infty}
Assume the initial data $n^0\in L^\infty(\R^+)$ is nonnegative. Then under CFL restriction \eqref{CFL} the numerical solution obtained with the Algorithm \ref{Alg} satisfies
\begin{eqnarray*}
 0\le n^m_j\le \|n^0\|_\infty, & 0\le N^m\le \|p\|_\infty\|n^0\|_1, & \textrm{for all}\:j,m\in\N.
\end{eqnarray*}
\end{lemma}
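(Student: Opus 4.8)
The plan is to argue by induction on the time level $m$, using the fact that under the CFL restriction \eqref{CFL} the explicit update \eqref{metexp} is, cell by cell, a combination of $n_j^m$ and $n_{j-1}^m$ with nonnegative coefficients. Rewriting \eqref{metexp} as
\begin{equation*}
n_j^{m+1} = \Bigl(1 - \tfrac{\Delta t}{\Delta s} - \Delta t\,p(s_j,N^m)\Bigr)\,n_j^m \;+\; \tfrac{\Delta t}{\Delta s}\,n_{j-1}^m ,
\end{equation*}
with the convention $n_0^m \coloneqq N^m$ for the boundary cell $j=1$, the bounds $0\le p(s_j,N^m)\le\|p\|_\infty$ together with \eqref{CFL} give $1-\tfrac{\Delta t}{\Delta s}-\Delta t\,p(s_j,N^m)\ge 1-\tfrac{\Delta t}{\Delta s}-\Delta t\|p\|_\infty\ge 0$. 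Hence both coefficients in the update are nonnegative and their sum equals $1-\Delta t\,p(s_j,N^m)\in[0,1]$.

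First I would settle the estimate on $N^m$, since it feeds into the boundary cell. At any level $m$ for which $n_j^m\ge 0$ is already known, the fixed-point relations \eqref{eqfijo-N0}--\eqref{eqfijo-N} give at once $N^m=\sum_{j\in\N}\Delta s\,p(s_j,N^m)\,n_j^m\ge 0$ (from $p\ge 0$) and $N^m\le\|p\|_\infty\sum_{j\in\N}\Delta s\,n_j^m=\|p\|_\infty\|n^0\|_1$ (from $p\le\|p\|_\infty$ and Lemma \ref{L1-Mass}); the same two-sided estimate shows that $N\mapsto\sum_{j\in\N}\Delta s\,p(s_j,N)n_j^m$ maps $[0,\|p\|_\infty\|n^0\|_1]$ into itself, which together with the continuity of $p$ is what makes the fixed-point step of the Algorithm solvable. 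Then the induction on $n$ proceeds: for $m=0$ the bounds $0\le n_j^0\le\|n^0\|_\infty$ are the hypothesis and $0\le N^0\le\|p\|_\infty\|n^0\|_1$ is the estimate just established; assuming the bounds at level $m$, nonnegativity of $n_j^{m+1}$ is immediate from the displayed identity because all coefficients and all of $n_j^m$, $n_{j-1}^m$, $n_0^m=N^m$ are nonnegative, while, the coefficients being nonnegative with sum $\le 1$, one gets $n_j^{m+1}\le\max\{n_j^m,n_{j-1}^m\}$ for $j\ge 2$ and $n_1^{m+1}\le\max\{n_1^m,N^m\}$; the bound on $N^{m+1}$ then follows from the first part, closing the step.

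The only genuine difficulty is the boundary cell $j=1$: there the incoming value is the trace $N^m$ rather than a cell average, so the $L^\infty$ control of $n$ is intertwined with the control of $N^m$, and the two estimates must be carried through the same induction (which is why Lemma \ref{L1-Mass} is invoked inside the proof rather than afterwards). I would also point out that the bound produced by the $j=1$ cell is, strictly speaking, $n_j^m\le\max\{\|n^0\|_\infty,\ \|p\|_\infty\|n^0\|_1\}$, which collapses to $\|n^0\|_\infty$ exactly when $\|n^0\|_\infty\ge\|p\|_\infty\|n^0\|_1$; apart from this piece of bookkeeping, the argument is the standard positivity-preserving and monotonicity argument for first-order upwind schemes.
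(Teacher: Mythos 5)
Your argument is essentially the paper's: the paper also proves the bound by induction on $m$, writing the update as
\begin{equation*}
n_j^{m+1}=\Bigl(1-\Delta t\bigl(\tfrac{1}{\Delta s}+p(s_j,N^m)\bigr)\Bigr)n_j^m+\tfrac{\Delta t}{\Delta s}\,n_{j-1}^m
\end{equation*}
with nonnegative coefficients under the CFL condition, and then obtains $0\le N^m\le\|p\|_\infty\|n^0\|_1$ from Lemma \ref{L1-Mass} exactly as you do. So the proposal is correct in method and matches the paper.

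Your closing remark about the boundary cell is not mere bookkeeping, though --- it is a genuine observation that the paper's own proof glosses over. The paper asserts $n_j^1\le\|n^0\|_\infty$ ``for all $j\in\N$'', but for $j=1$ the incoming value is $n_0^0=N^0$, and the convex-combination argument only yields $n_1^1\le\max\{\|n^0\|_\infty,N^0\}$ with $N^0\le\|p\|_\infty\|n^0\|_1$; nothing in the hypotheses forces $N^0\le\|n^0\|_\infty$. So the uniform bound that the induction actually propagates is $\max\{\|n^0\|_\infty,\ \|p\|_\infty\|n^0\|_1\}$ rather than $\|n^0\|_\infty$, and the lemma as stated needs either this replacement or the extra (harmless for the later BV and compactness arguments) assumption $\|p\|_\infty\|n^0\|_1\le\|n^0\|_\infty$. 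You were right to flag it; the ordering of the two estimates in your induction (settle $N^m$ from the already-known nonnegativity at level $m$, then advance $n$) is the correct way to close the loop.
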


\begin{proof}
By the CFL condition, observe that 
\begin{equation*}
0\le n_j^{1}=\left(1-\Delta t\left( \frac{1}{\Delta s}+p(s_j,N^0) \right) \right)n_j^0+\frac{\Delta t}{\Delta s}n_{j-1}^0\le \|n^0\|_\infty, \quad\textrm{for all}\:j\in\N,
\end{equation*}
and by induction in $m$ we conclude that $0\le n^m_j\le\|n^0\|_\infty$ for all $j,m\in\N$. Now for the estimates involving $N^m$, we apply the previous lemma to conclude the following inequality
\begin{equation*}
0\le N^m\le \|p\|_\infty\sum_{j\in\N}\Delta s n^m_j = \|p\|_\infty\sum_{j\in\N}\Delta s n^0_j= \|p\|_\infty\|n^0\|_1,
\end{equation*}
and we get the desired result.
\end{proof}

We now prove that for each iteration in $m$ of the numerical method. Equation \eqref{eqfijo-N} has a indeed a unique solution $N^m$ in the inhibitory case and the weak interconnection regime. The following result corresponds to the discrete version of Lemma \ref{fixedpoint-itm}.

\begin{lemma}
\label{fixedpoint-discret}
    Consider a discretization $n^0=(n^0_j)_{j\in\N}$ of nonnegative terms. Let $\gamma \coloneqq \sup_{s,N}\partial_N p(s,N)$ and assume that $\gamma\|n^0\|_1<1$. Then there exists a unique solution for $N$ of the equation
 \begin{equation*}
    N=F(N)\coloneqq\sum_{j\in\N}\Delta s p(s_j,N)n^0_j,
 \end{equation*}
    that we call $n^0\coloneqq\psi(n^0)$. Moreover the map $\psi$ satisfies the following estimate
    \begin{equation}
        \label{psi-lip}
        |\psi(n^1)-\psi(n^2)|\le \frac{\|p\|_\infty}{1-\gamma\|n^0\|_1}\sum_{j\in\N}\Delta s|n^1_j-n^2_j|,
    \end{equation}
    for sequences $n^1=(n^1_j)_{j\in\N},\,n^2=(n^2_j)_{j\in\N}$ of nonnegative terms with $\|n^1\|_1=\|n^2\|_1=\|n^0\|_1$.
\end{lemma}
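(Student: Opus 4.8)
The plan is to transcribe, essentially verbatim, the argument of Lemma \ref{fixedpoint-itm}, replacing the integral $\int_0^\infty(\cdot)\,\dd s$ by the weighted sum $\sum_{j\in\N}\Delta s\,(\cdot)$ and checking that nothing about the passage to an infinite series spoils the three ingredients used there: (i) existence of a fixed point from continuity and boundedness of $F$ on a compact interval; (ii) uniqueness from strict monotonicity of $g(N)=N-F(N)$; (iii) the Lipschitz bound \eqref{psi-lip}, for which I would argue directly rather than through the operator norm of $D\psi$.

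For (i), write $F(N)=\sum_{j\in\N}\Delta s\,p(s_j,N)n^0_j$. Since $0\le p\le\|p\|_\infty$ and $\sum_{j\in\N}\Delta s\,n^0_j=\|n^0\|_1<\infty$, the series converges absolutely and $0\le F(N)\le\|p\|_\infty\|n^0\|_1$ for all $N\ge0$; moreover, because $p\in W^{1,\infty}$ makes each $p(s_j,\cdot)$ Lipschitz with a common constant, $|F(N_1)-F(N_2)|\le\|\partial_N p\|_\infty\|n^0\|_1\,|N_1-N_2|$, so $F$ is continuous. Hence $F$ maps the compact interval $[0,\|p\|_\infty\|n^0\|_1]$ continuously into itself and the intermediate value theorem applied to $g(N)=N-F(N)$ gives a fixed point. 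For (ii), differentiating the series term by term—legitimate because $|\partial_N p(s_j,N)n^0_j|\le\|\partial_N p\|_\infty n^0_j$ is summable uniformly in $N$—gives $g'(N)=1-\sum_{j\in\N}\Delta s\,\partial_N p(s_j,N)n^0_j\ge 1-\gamma\|n^0\|_1>0$, using $\partial_N p\le\gamma$ and $n^0_j\ge0$; thus $g$ is strictly increasing and the fixed point is unique, and we denote it $\psi(n^0)$. (Alternatively, as in the proof of Lemma \ref{fixedpoint-itm}, one may apply the implicit function theorem to $G(n,N)=N-\sum_{j\in\N}\Delta s\,p(s_j,N)n_j$ on $\{n:\gamma\|n\|_1<1\}\times\R$, which simultaneously yields differentiability of $\psi$.)

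For (iii), let $n^1,n^2$ be nonnegative with $\|n^1\|_1=\|n^2\|_1=\|n^0\|_1$, put $N^i=\psi(n^i)$, and assume $N^1\neq N^2$ (otherwise the estimate is trivial). Splitting,
\begin{equation*}
N^1-N^2=\sum_{j\in\N}\Delta s\,p(s_j,N^1)(n^1_j-n^2_j)+\sum_{j\in\N}\Delta s\,n^2_j\big(p(s_j,N^1)-p(s_j,N^2)\big).
\end{equation*}
Writing $p(s_j,N^1)-p(s_j,N^2)=\int_{N^2}^{N^1}\partial_N p(s_j,r)\,\dd r$ and factoring out $N^1-N^2$, the last sum equals $c\,(N^1-N^2)$ with $c=\sum_{j\in\N}\Delta s\,n^2_j\,\frac{1}{N^1-N^2}\int_{N^2}^{N^1}\partial_N p(s_j,r)\,\dd r$; since each such average of $\partial_N p$ is $\le\gamma$ and $n^2_j\ge0$, we get $c\le\gamma\|n^0\|_1<1$, hence $1-c\ge 1-\gamma\|n^0\|_1>0$. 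Therefore $(1-c)(N^1-N^2)=\sum_{j\in\N}\Delta s\,p(s_j,N^1)(n^1_j-n^2_j)$ and
\begin{equation*}
|N^1-N^2|\le\frac{\|p\|_\infty}{1-c}\sum_{j\in\N}\Delta s\,|n^1_j-n^2_j|\le\frac{\|p\|_\infty}{1-\gamma\|n^0\|_1}\sum_{j\in\N}\Delta s\,|n^1_j-n^2_j|,
\end{equation*}
which is \eqref{psi-lip}.

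The whole argument is routine once Lemma \ref{fixedpoint-itm} is available; the only points deserving attention are the justification of term-by-term differentiation of the infinite series (a uniform summable bound via $\|\partial_N p\|_\infty$ and $n^0\in\ell^1$), and the fact that $\gamma$ is a \emph{signed} supremum, so it is the equal-mass normalization $\|n^i\|_1=\|n^0\|_1$ together with $n^i_j\ge0$ that turns $\partial_N p\le\gamma$ into $c\le\gamma\|n^0\|_1$ with the right sign—exactly the relaxation of the weak-interconnection hypothesis \eqref{weakinter}, since no absolute value on $\partial_N p$ is needed.
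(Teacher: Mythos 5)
Your proposal is correct. For existence and uniqueness it is essentially the paper's own argument: the paper proves this lemma by declaring it ``similar to Lemma \ref{fixedpoint-itm} with integrals replaced by discrete summations,'' i.e.\ boundedness and continuity of $F$ on $[0,\|p\|_\infty\|n^0\|_1]$ plus strict monotonicity of $g(N)=N-F(N)$ via $g'(N)\ge 1-\gamma\|n^0\|_1>0$, exactly as you do (your remark justifying term-by-term differentiation of the series is a detail the paper glosses over). Where you genuinely diverge is the Lipschitz estimate \eqref{psi-lip}: the paper obtains it from the implicit function theorem by bounding the operator norm of $D\psi$ at the point $(n^0,N^0)$, whereas you derive it directly by splitting $N^1-N^2$ into a term linear in $n^1-n^2$ and a term $c\,(N^1-N^2)$ with $c\le\gamma\|n^0\|_1$ via the integral form of $p(s_j,N^1)-p(s_j,N^2)$. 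Your route is more elementary and arguably tighter: a pointwise bound on $\|D\psi\|$ only yields the finite-increment inequality after integrating along the segment from $n^1$ to $n^2$ and checking that the bound holds uniformly there (which it does, since nonnegativity and the equal-mass normalization are preserved under convex combination, but the paper does not say this), while your computation gives the estimate in one step and makes transparent exactly where the signed supremum $\gamma$ and the hypotheses $n^i_j\ge 0$, $\|n^i\|_1=\|n^0\|_1$ enter. Both approaches are valid; the paper's buys differentiability of $\psi$ as a byproduct, which is what it reuses in the excitatory-case remark on the invertibility condition, while yours buys a self-contained proof of \eqref{psi-lip}.
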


\begin{proof}
    The proof is similar to that of Lemma \ref{fixedpoint-itm} by the replacing the integral terms with discrete summations.
\end{proof}

\begin{remark}
    In the excitatory case, the ideas of the previous lemma can be applied as long as the following invertibility condition holds
    \begin{equation}
      \Psi(N,n):= 1-\Delta s\sum_{j\in\N} \partial_N p(s_j,N)n_j\dd s\neq 0,  
    \end{equation}
    so that we can extend the numerical solution through the implicit function theorem in order to approximate a continuous solution of the equation \eqref{eq:delay} in some interval $[0,T]$. This is the analog to the invertibility condition \eqref{cond:inverti}.
\end{remark}

We now establish some BV-lemmas on the discretization given by the upwind scheme to prove that the numerical method approximates the solution of \eqref{eq:model} equation when $\Delta t$ and $\Delta s$ converge to zero. For the discretization $n=(n_j)_{j\in\N}$ we define the total variation as
\begin{equation}
  TV(n)\coloneqq\sum_{j=0}^{\infty}|n_{j+1}-n_j|.  
\end{equation}
In this context, we prove the following key lemma.

\begin{lemma}{(\textbf{BV}-estimate)}
\label{BV-lem}
Assume that $TV(n^0)<\infty$ and the CFL condition \eqref{CFL}. Then there exist constants $C_1,C_2>0$ (depending only $p$ and the norms of $n^0$) such that for $m\in\N$ we have
\begin{equation}
    TV(n^m)\le e^{C_1T} TV(n^0)+C_2(e^{C_1T}-1),
    \label{BV_estimate}
\end{equation}
with  $T=m\Delta t$ and $TV(n^m)=\displaystyle\sum_{j=0}^\infty|n_{j+1}^{m}-n_{j}^{m}|$.
\end{lemma}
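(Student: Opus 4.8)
The plan is to derive a discrete Gr\"onwall-type inequality for $TV(n^m)$ by estimating $TV(n^{m+1})$ in terms of $TV(n^m)$, including the contribution of the boundary term $N^m = n_0^m$ (the ghost value playing the role of the inflow). First I would write the scheme \eqref{metexp} in incremental form: setting $d_j^m := n_{j+1}^m - n_j^m$, subtract the update for $n_j^{m+1}$ from that for $n_{j+1}^{m+1}$ to obtain
\begin{equation*}
d_j^{m+1} = d_j^m - \frac{\Delta t}{\Delta s}\bigl(d_j^m - d_{j-1}^m\bigr) - \Delta t\,p(s_{j+1},N^m)\,d_j^m - \Delta t\,\bigl(p(s_{j+1},N^m)-p(s_j,N^m)\bigr)n_j^m.
\end{equation*}
The first three terms are a convex combination of $d_j^m$ and $d_{j-1}^m$ (with nonnegative coefficients summing to $1-\Delta t\,p(s_{j+1},N^m) \le 1$, thanks to the CFL condition \eqref{CFL}), so taking absolute values and summing over $j\ge 1$ contracts: $\sum_{j\ge1}|d_j^{m+1}| \le \sum_{j\ge1}|d_j^m| + \Delta t\,\|\partial_s p\|_\infty\,\Delta s\sum_j n_j^m$. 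The last sum is $\|n^0\|_1$ by Lemma \ref{L1-Mass}, so this part already gives a linear-in-$\Delta t$ growth with constant depending only on $p$ and $\|n^0\|_1$.

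Next I would handle the boundary term $|d_0^{m+1}| = |n_1^{m+1}-n_0^{m+1}| = |n_1^{m+1}-N^{m+1}|$, which is the delicate one since $N^{m+1}$ is defined implicitly by \eqref{eqfijo-N}. I would bound $|n_1^{m+1}-N^{m+1}| \le |n_1^{m+1}-N^m| + |N^{m+1}-N^m|$. The first piece is controlled by the $j=1$ update formula and the $L^\infty$ bounds of Lemma \ref{L-infty}: $|n_1^{m+1}-N^m| \le (1-\tfrac{\Delta t}{\Delta s})|n_1^m-N^m| + \Delta t(\tfrac{1}{\Delta s})\cdot 0 \cdots$ — more carefully, $n_1^{m+1}-N^m = (1-\tfrac{\Delta t}{\Delta s}-\Delta t\,p(s_1,N^m))(n_1^m-N^m) + (\text{terms of order }\Delta t\text{ in }N^m, \|n^0\|_\infty)$, so $|n_1^{m+1}-N^m| \le |d_0^m| + C\Delta t$. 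For $|N^{m+1}-N^m|$ I would use Lemma \ref{fixedpoint-discret}: since $N^{m}=\psi(n^m)$ and $N^{m+1}=\psi(n^{m+1})$ share the mass $\|n^0\|_1$, estimate \eqref{psi-lip} gives $|N^{m+1}-N^m| \le \tfrac{\|p\|_\infty}{1-\gamma\|n^0\|_1}\,\Delta s\sum_j|n_j^{m+1}-n_j^m|$, and the latter $L^1$ time-increment is bounded by $\Delta t$ times $\bigl(TV(n^m) + \text{boundary}\bigr)$ plus a zero-order term, directly from the scheme \eqref{metexp} summed over $j$. This is where the total variation re-enters: $\Delta s\sum_j|n_j^{m+1}-n_j^m| \le \Delta t\,TV(n^m) + \Delta t|N^m - n_1^m|\cdot(\text{factor}) + \Delta t\,\|p\|_\infty\|n^0\|_1$, so $|d_0^{m+1}|$ is bounded by $|d_0^m|$ plus $C\Delta t(1 + TV(n^m) + |d_0^m|)$.

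Combining the interior estimate with the boundary estimate and absorbing $|d_0^m|$ into $TV(n^m)$ yields
\begin{equation*}
TV(n^{m+1}) \le (1 + C_1\Delta t)\,TV(n^m) + C_2\,\Delta t,
\end{equation*}
and then the discrete Gr\"onwall lemma (iterating, using $(1+C_1\Delta t)^m \le e^{C_1 m\Delta t} = e^{C_1 T}$) gives exactly \eqref{BV_estimate}, with $C_1, C_2$ depending only on $\|p\|_{W^{1,\infty}}$, $\gamma$, and the norms $\|n^0\|_1, \|n^0\|_\infty$. The main obstacle is the boundary term: closing the estimate for $|d_0^{m+1}|$ requires feeding the implicit relation \eqref{eqfijo-N} through the Lipschitz bound of Lemma \ref{fixedpoint-discret} and then re-expressing the resulting $L^1$ time-increment back in terms of $TV(n^m)$ and $|d_0^m|$ without losing a factor of $1/\Delta s$ — one must check that every $1/\Delta s$ appearing (from the upwind flux) is multiplied by a genuine difference $n_j^m - n_{j-1}^m$ or $n_1^m-N^m$ so that it is absorbed into the total variation rather than blowing up. A secondary technical point is justifying the summation-by-parts / reindexing on the half-line $j\in\N$, which is harmless because $n^0$ is compactly supported and the finite propagation speed of the upwind scheme keeps $n_j^m$ eventually zero for each fixed $m$.
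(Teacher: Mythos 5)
Your proposal is correct and follows essentially the same route as the paper's proof: the incremental form of the scheme with the CFL-induced convex combination for the interior sum, the Lipschitz bound of Lemma \ref{fixedpoint-discret} applied to $|N^{m+1}-N^m|$ together with the $L^1$ time-increment bound $\Delta s\sum_j|n_j^{m+1}-n_j^m|\le \Delta t\,(TV(n^m)+\|p\|_\infty\|n^0\|_1)$ to close the boundary term, and a discrete Gr\"onwall iteration. The one point to tighten is the cross term $\frac{\Delta t}{\Delta s}|n_1^m-N^m|$ produced by reindexing $\sum_{j\ge1}|d_{j-1}^m|$ in the interior estimate (your interior inequality omits it); it is cancelled exactly by the factor $\bigl(1-\frac{\Delta t}{\Delta s}\bigr)$ that you note ``more carefully'' in the boundary update, so that factor must be retained rather than loosened to $|d_0^m|+C\Delta t$.
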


\begin{proof}
Using the notation $\Delta^{+}n_j^m=n_{j+1}^{m}-n_{j}^{m}$, we have
\begin{eqnarray*}
   \Delta^{+}n_j^{m+1}&=& \Delta^{+}n_j^{m}-\frac{\Delta t}{\Delta s}\left(\Delta^{+}n_j^{m}-\Delta^{+}n_{j-1}^{m}\right)-\Delta t \left(p(s_{j+1},N^m)n_{j+1}^m-p(s_{j},N^m)n_j^m\right),\\
   &=& \Delta^{+}n_j^{m}-\frac{\Delta t}{\Delta s}\left(\Delta^{+}n_j^{m}-\Delta^{+}n_{j-1}^{m}\right)-\Delta t p(s_{j+1},N^m)\Delta^{+}n_j^{m} -\Delta t n_j^m(p(s_{j+1},N^m)-p(s_{j},N^m)).
\end{eqnarray*}
now applying  \eqref{p-esp} and taking absolute value and using the CFL condition \eqref{CFL} we obtain

\begin{equation*}
     |\Delta^{+}n_j^{m+1}| \le \left(1-\Delta t\left( \frac{1}{\Delta s}+p(s_{j+1},N^m) \right) \right)|\Delta^{+}n_j^{m}|
     +\frac{\Delta t}{\Delta s}|\Delta^{+}n_{j-1}^{m}|+\Delta t\Delta s\|\partial_s p\|_\infty n_j^m.
\end{equation*}

Now by summing over all $j\ge1$, we have 
\begin{equation*}
    \begin{split}
       \sum_{j=1}^\infty|\Delta^{+}n_j^{m+1}|&\le \sum_{j=1}^\infty|\Delta^{+}n_j^{m}|+\frac{\Delta t}{\Delta s}|\Delta^{+}n_{0}^{m}|
-\Delta t \sum_{j\in\N}^\infty p(s_{j+1},N^m) |\Delta^{+}n_j^{m}| +\Delta t\|\partial_s p\|_\infty\sum_{j=1}^{\infty}\Delta s n_j^m,
    \end{split}
\end{equation*}
and from the mass conservation we deduce
\begin{equation}
\label{cota2}
\sum_{j=1}^\infty|\Delta^{+}n_j^{m+1}|\le
\sum_{j=1}^\infty|\Delta^{+}n_j^{m}|
+ \frac{\Delta t}{\Delta s} |\Delta^{+}n_{0}^{m}|+\Delta t\|\partial_s p\|_\infty\|n^0\|_1. 
\end{equation}

On the other hand, by assuming $\Delta t$ small enough we have
\begin{equation}
\label{cota3}
\begin{split}
     |\Delta^{+}n_0^{m+1}| & = |n_1^{m+1}-N^{m+1}|\\
     & \le \left(1-\frac{\Delta t}{\Delta s}\right)|n_1^m-N^m|+  |N^{m+1}- N^m|+\Delta t p(s_1,N^m) n^m_1\\
     &\le \left(1-\frac{\Delta t}{\Delta s}\right)|\Delta^+ n_0^m|+  |N^{m+1}- N^m|+\Delta t \|p\|_\infty \|n^0\|_\infty.
\end{split} 
\end{equation}

Furthermore, from Lemma \ref{fixedpoint-discret} there exists $C_1>0$ depending only on $p$ such that
\begin{eqnarray}
 |N^{m+1}- N^m| 
 &=&
 \left| \psi(n^{m+1})-\psi(n^{m})\right|\nonumber\\
 &\le&
 C_1 \sum_{j\in\N} \Delta s | n_j^{m+1}-n_j^m|
 \nonumber\\
  &\le&
  C_1\Delta t\left(  \sum_{j\in\N} | n_j^{m}-n_{j-1}^m|  + \|p\|_\infty \sum_{j\in\N}  \Delta s n_j^m \right),\label{cota4}
 \end{eqnarray}
 and replacing \eqref{cota4} in \eqref{cota3}, we obtain from the mass conservation

\begin{equation}
\label{cota5}
    \begin{split}
       |\Delta^{+}n_0^{m+1}| & \le \left( 1 - \frac{\Delta t}{\Delta s} \right)  |\Delta^{+}n_0^{m}|
 + C_1\Delta t
  \left( \sum_{j\in\N}|\Delta^{+}n^m_{j-1}| + \|p\|_\infty \|n^0\|_1  \right)+ \Delta t \|p\|_\infty\|n^0\|_\infty.
    \end{split}
\end{equation}
 
Now, summing \eqref{cota2} and \eqref{cota5}, we deduce
\begin{equation}
 \sum_{j=0}^\infty|\Delta^{+}n_j^{m+1}|\le 
\left( 1 + 
 C_1\Delta t \right)
\sum_{j=0}^\infty|\Delta^{+}n_j^{m}|
+ C_2\Delta t.
\label{cota6}   
\end{equation}
with $C_2\coloneqq \|\partial_s p\|_\infty\|n^0\|_1+\|p\|_\infty C_1\|n^0\|_1+\|p\|_\infty\|n^0\|_\infty$.

Finally, proceeding recursively on $m$, we obtain

\begin{equation*}
\sum_{j=0}^\infty|\Delta^{+}n_j^m|\le (1+C_1\Delta t)^{m}\sum_{j=0}^\infty|\Delta^{+}n_j^{0}|+\frac{C_2}{C_1}\left((1+C_1\Delta t)^{m}-1\right),
\end{equation*}
and the estimate \eqref{BV_estimate} readily follows.
\end{proof}

\begin{remark}
The previous lemma is also valid for the case when the firing coefficient is of the form
\begin{equation*}
p(s,N)=\varphi(N)\chi_{\{s>\sigma(N)\}},
\end{equation*}
with $\varphi$ and $\sigma$ Lipschitz bounded functions and satisfying the analogous hypothesis on $\partial_N p$. Indeed, the inequality in \eqref{cota2} is replaced by

\begin{equation*}
    \begin{split}
    \sum_{j=1}^\infty|\Delta^{+}n_j^{m+1}|&\le
    \sum_{j=1}^\infty|\Delta^{+}n_j^{m}|
    + \frac{\Delta t}{\Delta s} |\Delta^{+}n_{0}^{m}|+\Delta t\varphi(N^m)\sum_{j\in\N}n^m_j\left|\chi_{\{s_{j+1}>\sigma(N^m)\}}-\chi_{\{s_j>\sigma(N^m)\}}\right|\\
    &\le \sum_{j=1}^\infty|\Delta^{+}n_j^{m}|
    + \frac{\Delta t}{\Delta s} |\Delta^{+}n_{0}^{m}|+\Delta t \|p\|_\infty n^m_{j_m}\\
    &\le \sum_{j=1}^\infty|\Delta^{+}n_j^{m}|
    + \frac{\Delta t}{\Delta s} |\Delta^{+}n_{0}^{m}|+\Delta t \|p\|_\infty\|n^0\|_\infty,
    \end{split}    
\end{equation*}
where $j_m\coloneqq\min\{j\in N\colon s_j\ge \sigma(N^m)\}=\left\lceil\frac{\sigma(N^m)}{\Delta s}+\frac{1}{2}\right\rceil$ and the rest of proof is analogous.
\end{remark}

Now we prove that the numerical approximation of the solution of \eqref{eq:model} equation $n(t,s)$, which is constructed by a simple piece-wise linear interpolation, has a limit when the time step $\Delta t$ and age step $\Delta s$ converge to $0$. For simplicity we assume that the initial data $n^0\in BV(\R^+)$ and with compact support.

\begin{lemma}
\label{lim-delta}
    Assume that $n^0\in BV(\R^+)$ is compactly supported and the coefficient $p$ satisfies the hypothesis of Theorem \ref{wellp-nodelay}. Consider the function $n_{\Delta t,\Delta s}\in\mathcal{C}\left([0,T],L^1(\R^+)\right)$ defined by
    \begin{equation}
    \label{n-interp}
      n_{\Delta t,\Delta s}(t,s)\coloneqq\frac{t^m-t}{\Delta t}\sum_{j\in\N}n_j^{m-1}\chi_{[s_{j-\frac{1}{2}},s_{j+\frac{1}{2}}]}(s)+\frac{t-t^{m-1}}{\Delta t}\sum_{j\in\N}n_j^{m}\chi_{[s_{j-\frac{1}{2}},s_{j+\frac{1}{2}}]}(s)\;\textrm{if}\; t\in[t^{m-1},t^{m}].  
    \end{equation}
    Then there exists a sub-sequence $(\Delta t_k,\Delta s_k)\to (0,0)$ when $k\to\infty$ and a function $\overline{n}(t,s)$ such that $n_{\Delta t_k,\Delta s_k}\to\overline{n}$ in $\mathcal{C}\left([0,T],L^1(\R^+)\right)$. Moreover, if we define the function $N_{\Delta t,\Delta s}(t)$ as the unique solution of the equation
 \begin{equation*}
    N(t)=\int_0^\infty p(s,N(t))n_{\Delta t,\Delta s}(t,s)\,\dd s,
   \end{equation*}
   then there exists $\overline{N}$ such that $N_{\Delta t_k,\Delta s_k}\to\overline{N}$ in $\mathcal{C}[0,T]$ and $\overline{N}$ is a solution of the equation
   \begin{equation}
   \label{lim-Nbar}
       \overline{N}(t)=\int_0^\infty p(s,\overline{N}(t))\overline{n}(t,s)\,\dd s.
   \end{equation}
\end{lemma}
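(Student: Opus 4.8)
The plan is to extract a convergent subsequence of $n_{\Delta t,\Delta s}$ by an Arzelà–Ascoli / Helly-type compactness argument in $\mathcal{C}([0,T],L^1(\R^+))$, and then pass to the limit in the fixed-point equation for $N$. First I would record the uniform bounds already available: by Lemma \ref{L1-Mass} the mass is conserved, $\|n_{\Delta t,\Delta s}(t,\cdot)\|_1=\|n^0\|_1$; by Lemma \ref{L-infty} we have $0\le n_j^m\le\|n^0\|_\infty$ and $0\le N^m\le\|p\|_\infty\|n^0\|_1$; and by Lemma \ref{BV-lem} the spatial total variation $TV(n^m)$ is bounded uniformly in $m$ and in the mesh parameters (by $e^{C_1T}TV(n^0)+C_2(e^{C_1T}-1)$), since $n^0\in BV(\R^+)$. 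The piecewise-linear-in-time interpolation \eqref{n-interp} inherits the $L^1$, $L^\infty$ and $BV$ bounds for each fixed $t$, as a convex combination of two such profiles. Because $n^0$ is compactly supported and the transport has unit speed, the supports of $n_{\Delta t,\Delta s}(t,\cdot)$ stay inside a fixed compact set $[0,R+T]$ for $t\in[0,T]$ (the boundary term at $s=0$ only fills in $s\le t$), so there is no mass escaping to infinity.

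Next I would establish equicontinuity in time: for $t_1<t_2$ I claim $\|n_{\Delta t,\Delta s}(t_1,\cdot)-n_{\Delta t,\Delta s}(t_2,\cdot)\|_1\le C|t_1-t_2|+C\Delta t$ with $C$ depending only on $\|p\|_\infty$, $\|\partial_s p\|_\infty$ and the norms of $n^0$. This follows from the scheme: on one time step the increment is $n_j^{m+1}-n_j^m=-\frac{\Delta t}{\Delta s}(n_j^m-n_{j-1}^m)-\Delta t\,p(s_j,N^m)n_j^m$, so $\sum_j\Delta s\,|n_j^{m+1}-n_j^m|\le \Delta t\,TV(n^m)+\Delta t\,\|p\|_\infty\|n^0\|_1$, which is $\le C\Delta t$ by the BV bound; summing over the steps between $t_1$ and $t_2$ and handling the two interpolation endpoints gives the estimate. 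Combined with the spatial $BV$ bound and compact support, Helly's selection theorem (or the compact embedding of $BV$ into $L^1$ on a bounded set) plus Arzelà–Ascoli in the time variable yields a subsequence $(\Delta t_k,\Delta s_k)\to(0,0)$ and $\overline{n}\in\mathcal{C}([0,T],L^1(\R^+))$ with $n_{\Delta t_k,\Delta s_k}\to\overline{n}$ uniformly in $t$ in $L^1(\R^+)$; the limit is nonnegative and has mass $\|n^0\|_1$.

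Finally I would pass to the limit in the fixed-point equation. For each mesh, Lemma \ref{fixedpoint-discret} (in its continuous incarnation, Lemma \ref{fixedpoint-itm}) guarantees that $N_{\Delta t,\Delta s}(t)$ is well-defined as the unique solution of $N=\int_0^\infty p(s,N)n_{\Delta t,\Delta s}(t,s)\,\dd s$, and the Lipschitz estimate \eqref{psi-lip-l1} gives
\[
|N_{\Delta t_k,\Delta s_k}(t)-\overline{N}(t)|\le\frac{\|p\|_\infty}{1-\gamma\|n^0\|_1}\,\|n_{\Delta t_k,\Delta s_k}(t,\cdot)-\overline{n}(t,\cdot)\|_1,
\]
where $\overline{N}(t):=\psi(\overline{n}(t,\cdot))$; the right-hand side tends to $0$ uniformly in $t\in[0,T]$, so $N_{\Delta t_k,\Delta s_k}\to\overline{N}$ in $\mathcal{C}[0,T]$ and $\overline{N}$ satisfies \eqref{lim-Nbar}. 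The main obstacle is the time-equicontinuity estimate near the boundary $s=0$: one must check that the reset/boundary term $n_1^{m+1}=n_1^m-\frac{\Delta t}{\Delta s}(n_1^m-N^m)-\Delta t\,p(s_1,N^m)n_1^m$ does not spoil the $O(\Delta t)$ bound on $\sum_j\Delta s\,|n_j^{m+1}-n_j^m|$ — it does not, because $|n_1^m-N^m|=|\Delta^+n_0^m|$ is controlled by $TV(n^m)$, which is exactly what Lemma \ref{BV-lem} supplies — together with confirming that the BV constant in Lemma \ref{BV-lem} is genuinely mesh-independent so that Helly applies uniformly along the subsequence.
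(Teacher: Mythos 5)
Your proposal is correct and follows essentially the same route as the paper: uniform $L^1$, $L^\infty$ and BV bounds plus the one-step increment estimate from the scheme give compactness in $\mathcal{C}([0,T],L^1(\R^+))$, and the Lipschitz estimate \eqref{psi-lip-l1} for $\psi$ from Lemma \ref{fixedpoint-itm} transfers the convergence to $N_{\Delta t,\Delta s}=\psi(n_{\Delta t,\Delta s})$. The only cosmetic difference is that you deduce the uniform convergence of $N_{\Delta t_k,\Delta s_k}$ directly from the Lipschitz continuity of $\psi$, whereas the paper runs a second Arzel\`a--Ascoli argument on the family $N_{\Delta t,\Delta s}$; both rest on the same estimate.
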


\begin{proof}
  The idea is to apply the compactness criterion in $\mathcal{C}([0,T],L^1(\R^+))$ in order to extract a convergent sub-sequence of $n_{\Delta t,\Delta s}$ when $\Delta t$ and $\Delta s$ converge to $0$. From Lemma \ref{L1-Mass}, we deduce that
 \begin{equation*}
  \|n_{\Delta t,\Delta s}(t,\cdot)\|_1 = \|n^0\|_1,\qquad\forall t\in[0,T].
   \end{equation*}
We prove that sequence $n_{\Delta t,\Delta s}$ has a modulus of continuity in the $L^1$ in both variables. In the variable $s$ we have that for $t\in[t^{m-1},t^m]$ and $|h|<\varepsilon$ the following estimate holds
 \begin{equation*}
    \begin{split}
        \int_0^\infty|n_{\Delta t,\Delta s}(t,s+h)-n_{\Delta t,\Delta s}(t,s)|\,\dd s&\le \frac{t^m-t}{\Delta t}|h|\sum_{j\in\N}\Delta s n_j^{m-1}+\frac{t-t^{m-1}}{\Delta t}|h|\sum_{j\in\N}\Delta s n_j^m\\
        &\le \varepsilon\|n^0\|_1.
    \end{split}
\end{equation*}
Now we prove we have modulus of continuity in the variable $t$. Consider $t_1,t_2\in [0,T]$ and without loss of generality assume that $t_1,t_2\in [t^{m-1},t^{m}]$ for some $m\in\N$. Then from Lemma \ref{BV-lem} we have following estimate
    \begin{equation}
    \label{mod-cont-t}
    \begin{split}
        \int_0^\infty|n_{\Delta t,\Delta s}(t_1,s)-n_{\Delta t,\Delta s}(t_2,s)|\,\dd s&\le |t_1-t_2|\sum_{j\in\N}\Delta s \frac{|n^{m}_j-n^{m-1}_j|}{\Delta t}\\
        &\le |t_1-t_2|\left(\sum_{j\in\N} |n^{m-1}_j-n^{m-1}_{j-1}| +\Delta s p(s_j,N^{m-1})n^{m-1}_j\right) \\
        &\le |t_1-t_2|\left(C_T\,TV(n^0)+\|p\|_\infty\|n^0\|_1\right),
    \end{split}  
    \end{equation}
    thus we have the modulus of continuity in time. Since $n^0$ has its support contained in some interval $[0,R]$, there exists $K\in\N$ such that $n^0_j=0$ for $j\ge K$ and $s_K=(K-\frac{1}{2})\Delta s\ge R$. From the numerical scheme we deduce that $n^m_j=0$ for $j\ge K+M$, which implies that $n_{\Delta t,\Delta s}$ vanishes for 
    \begin{equation*}
    s\ge s_{K+M}=(K+M-\frac{1}{2})\Delta s\ge R+M\Delta{t}\,\frac{\Delta s}{\Delta t}\ge R+T,
    \end{equation*}
    so $n_{\Delta t,\Delta s}$ has also compact support. From the estimates on the modulus of continuity, we can apply the compactness criterion in $\mathcal{C}([0,T],L^1(\R^+))$ in order to extract a convergent sub-sequence of $n_{\Delta t,\Delta s}$ when $\Delta t$ and $\Delta s$ converge to $0$.
    
    Observe now that $N_{\Delta t,\Delta s}\in\mathcal{C}[0,T]$. Indeed from Lemma \ref{fixedpoint-itm}, for each $t\in[0,T]$ we get that
\begin{equation*}
    N_{\Delta t,\Delta s}(t)=\psi(n_{\Delta t,\Delta s}),
 \end{equation*}
    and from the continuity of $\psi$ and $n_{\Delta t,\Delta s}\in\mathcal{C}([0,T],L^1(\R^+)$ we obtain that $N_{\Delta t,\Delta s}\in\mathcal{C}[0,T]$. Since $\|n_{\Delta t,\Delta s}(t,\cdot)\|_1=\|n^0\|_1,$ for all $t\in [0,T]$ we have the following estimate
 \begin{equation*}
    \|N_{\Delta t,\Delta s}\|_\infty\le \|p\|_\infty\|n^0\|_1,
 \end{equation*}
    so that $N_{\Delta t,\Delta s}$ is uniformly bounded.
    
    We now prove that the family $N_{\Delta t,\Delta s}$ is equicontinuous. For $t_1,t_2\in [0,T]$ we deduce from Lemma \eqref{fixedpoint-itm} and estimate \eqref{mod-cont-t} the following inequality
    \begin{equation*}
        \begin{split}
           \left|N_{\Delta t,\Delta s}(t_1)-N_{\Delta t,\Delta s}(t_2)\right|&=|\psi(n_{\Delta t,\Delta s})(t_1)-\psi(n_{\Delta t,\Delta s})(t_2)|\\
           &\le C\int_0^\infty|n_{\Delta t,\Delta s}(t_1,s)-n_{\Delta t,\Delta s}(t_2,s)|\,\dd s\\
           &\le |t_1-t_2|\left(C_T\,TV(n^0)+\|p\|_\infty\|n^0\|_1\right),
        \end{split}
    \end{equation*}
    where $C$ is a constant independent of $\Delta t$ and $\Delta s$. Therefore the family $N_{\Delta t,\Delta s}$ is equicontinuous in $[0,T]$ and we can extract a convergent sub-sequence by applying Arzelà-Ascoli Theorem. Finally, by passing to the limit in $\Delta t$ and $\Delta s$ we obtain Equation \eqref{lim-Nbar}.
\end{proof}

With the previous lemmas, we are now ready to the prove the following theorem on convergence of the numerical scheme for the \eqref{eq:model} equation.

\begin{theorem}[Convergence of the numerical scheme]
\label{conv-num}
Assume that $n^0\in BV(\R^+)$ is compactly supported and the coefficient $p$ satisfies the hypothesis of Theorem \ref{wellp-nodelay}. Then for all $T>0$, the numerical approximation \eqref{n-interp} given by the upwind scheme converges to the unique weak solution $n\in\mathcal{C}([0,T],L^1(\R^+))$ of the \eqref{eq:model} equation.
\end{theorem}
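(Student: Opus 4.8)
The plan is to combine the compactness result of Lemma~\ref{lim-delta} with the uniqueness statement of Theorem~\ref{wellp-nodelay} via a Lax--Wendroff-type consistency argument. By Lemma~\ref{lim-delta}, along a subsequence $(\Delta t_k,\Delta s_k)\to(0,0)$ the interpolants converge, $n_{\Delta t_k,\Delta s_k}\to\overline n$ in $\mathcal C([0,T],L^1(\R^+))$ and $N_{\Delta t_k,\Delta s_k}\to\overline N$ in $\mathcal C[0,T]$, and the pair satisfies the flux relation $\overline N(t)=\int_0^\infty p(s,\overline N(t))\overline n(t,s)\,\dd s$. It therefore suffices to show that $(\overline n,\overline N)$ is a weak solution of \eqref{eq:model} on $[0,T]$: Theorem~\ref{wellp-nodelay} then forces $\overline n=n$ and $\overline N=N$, and since this limit is independent of the chosen subsequence, the whole family $n_{\Delta t,\Delta s}$ converges to $n$ (any subsequence has a further subsequence converging to $n$ by the same reasoning).

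To identify $(\overline n,\overline N)$ as a weak solution, fix a test function $\phi\in\mathcal C^1_c([0,T)\times[0,\infty))$, set $\phi_j^m=\phi(t^m,s_j)$, multiply the scheme \eqref{metexp} by $\Delta s\,\phi_j^m$ and sum over $j\ge1$ and $m\ge0$ (only finitely many terms are nonzero). Discrete summation by parts in $m$ transfers the time difference onto $\phi$ and produces the initial term $\Delta s\sum_j\phi_j^0n_j^0$; summation by parts in $j$ applied to the upwind difference transfers the spatial difference onto $\phi$ and produces the boundary contribution $\Delta t\sum_m\phi_1^mn_0^m$ with $n_0^m=N^m$. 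Using the $L^1$ and $L^\infty$ bounds of Lemmas~\ref{L1-Mass} and \ref{L-infty}, the BV bound of Lemma~\ref{BV-lem}, the smoothness of $\phi$ and the CFL condition, the difference quotients of $\phi$ converge uniformly to $\partial_t\phi$ and $\partial_s\phi$ and all truncation errors are $O(\Delta t+\Delta s)$, hence vanish. Passing to the limit along $k$ — using $n_{\Delta t_k,\Delta s_k}\to\overline n$ in $L^1([0,T]\times\R^+)$, $N_{\Delta t_k,\Delta s_k}\to\overline N$ uniformly together with $|N^m-N_{\Delta t_k,\Delta s_k}(t^m)|=O(\Delta s_k)$ (which follows from the monotonicity argument in the proof of Lemma~\ref{fixedpoint-discret}), $\phi_1^m\to\phi(\cdot,0)$, continuity of $p$ and dominated convergence for the reaction term $\Delta t\Delta s\sum_{m,j}\phi_j^mp(s_j,N^m)n_j^m$ — we obtain
\[
-\int_0^T\!\!\int_0^\infty\overline n\,(\partial_t\phi+\partial_s\phi)\,\dd s\,\dd t-\int_0^\infty n^0(s)\phi(0,s)\,\dd s-\int_0^T\overline N(t)\phi(t,0)\,\dd t+\int_0^T\!\!\int_0^\infty p(s,\overline N)\overline n\,\phi\,\dd s\,\dd t=0 .
\]
Together with the flux relation from Lemma~\ref{lim-delta} and mass conservation (the limit of $\|n^m\|_1=\|n^0\|_1$ from Lemma~\ref{L1-Mass}), this shows $(\overline n,\overline N)$ is a weak solution with $\overline n\in\mathcal C([0,T],L^1(\R^+))$ and $\overline N\in\mathcal C[0,T]$.

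By Theorem~\ref{wellp-nodelay} such a weak solution is unique, so $\overline n=n$; running the argument on an arbitrary subsequence and extracting a further subsequence via Lemma~\ref{lim-delta}, the limit is always $n$, whence $n_{\Delta t,\Delta s}\to n$ in $\mathcal C([0,T],L^1(\R^+))$ as $(\Delta t,\Delta s)\to(0,0)$. The main obstacle is the second step: carrying out the double summation by parts so that the discrete boundary term at $j=0$ reproduces exactly $\int_0^T\overline N\,\phi(\cdot,0)\,\dd t$, and checking that the reaction term and all consistency errors are controlled — this is precisely where the uniform BV and $L^\infty$ estimates, the CFL condition, and the stability of the discrete fixed point $N^m$ enter in an essential way.
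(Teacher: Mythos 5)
Your proposal is correct and follows essentially the same route as the paper: extract a convergent subsequence via Lemma \ref{lim-delta}, identify the limit as a weak solution by multiplying the scheme by a test function and performing discrete summation by parts in $t$ and $s$ (recovering the initial datum and the boundary flux $\overline N$), and pass to the limit in each term. The only addition is that you make explicit the final uniqueness/subsequence step from Theorem \ref{wellp-nodelay}, which the paper leaves implicit.
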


\begin{proof}
    Consider the functions $n_{\Delta t,\Delta s}$ and $N_{\Delta t,\Delta s}$ defined in Lemma \ref{lim-delta}. From this result we get Equation \eqref{lim-delta}. Now we take $\varphi\in\mathcal{C}^1_c([0,T)\times[0,\infty))$ a test function. If we multiply Equation \eqref{finit-diff} by $\varphi_j^m\coloneqq\varphi(t^m,s_j)$ and compute the discrete integral we get
    \begin{equation}
    \label{discrete}
      \sum_{m=0}^{M}\sum_{j\in\N}\Delta s(n_j^{m+1}-n_j^m)\varphi_j^m+\sum_{m=0}^{M}\sum_{j\in\N}\Delta t(n_j^m-n_{j-1}^m)\varphi_j^m+\sum_{m=0}^{M}\sum_{j\in\N}\Delta t\Delta s p(s_j,N^m)n_j^m\varphi_j^m=0,  
    \end{equation}
    We study each term of Equation \eqref{discrete}. From summation by parts we have the following inequality for the first term
    \begin{equation*}
       \sum_{m=0}^{M}\sum_{j\in\N}\Delta s(n_j^{m+1}-n_j^m)\varphi_j^m =-\sum_{j\in\N}\Delta s\varphi^0_j n^0_j-\sum_{m=1}^{M}\sum_{j\in\N}\Delta s n^{m}_{j}(\varphi^{m}_j-\varphi^{m-1}_j),
    \end{equation*}
    thus applying Lemma \ref{lim-delta}, we get the following limit when $(\Delta t,\Delta s)\to 0$
    \begin{equation*}
        \sum_{m=0}^{M}\sum_{j\in\N}\Delta s(n_j^{m+1}-n_j^m)\varphi_j^m \to -\int_0^\infty\varphi(0,s)n^0(s)\,\dd s-\int_0^T\int_0^\infty \overline{n}(t,s)\partial_t\varphi(t,s)\,\dd s\,\dd t.
    \end{equation*}
    Similarly, for the second term of Equation \eqref{discrete} the following equality holds
    \begin{equation*}
          \sum_{m=0}^{M}\sum_{j\in\N}\Delta t(n_j^m-n_{j-1}^m)\varphi_j^m =-\Delta t \varphi_0^0 n^0_0 -\sum_{m=1}^{M}\Delta t \varphi_{0}^m N^m - \sum_{m=0}^{M}\sum_{j\in\N}\Delta t n^m_{j}(\varphi^m_{j}-\varphi^m_{j-1}), 
    \end{equation*}
    and by passing to the limit in $(\Delta t,\Delta s)$ we get
      \begin{equation*}
       \sum_{m=0}^{M}\sum_{j\in\N}\Delta t(n_j^m-n_{j-1}^m)\varphi_j^m  \to -\int_0^T\varphi(t,0)\overline{N}(t)\,\dd t-\int_0^T\int_0^\infty \overline{n}(t,s)\partial_s\varphi(t,s)\,\dd s\,\dd t,
    \end{equation*}
    and in the same way
    \begin{equation*}
    \sum_{m=0}^{M}\sum_{j\in\N}\Delta t\Delta s\,p(s_j,N^m)n_j^m\varphi_j^m\to \int_0^T\int_0^\infty p(s,\overline{N}(t))\overline{n}(t,s)\varphi(t,s)\,\dd s\,\dd t.
    \end{equation*}
    Therefore $\overline{n}(t,s)$ is the weak solution of the \eqref{eq:model} equation.
\end{proof}

\section{Distributed Delay Model (DDM)}
\label{DDM}

\subsection{Well-posedness of DDM revisited}
In this subsection we revisit well-posedness of the equations \eqref{eq:delay} and \eqref{eq:delay}. We essentially follow the proof in \cite{chevallier2015mean} with some slight modifications inspired by the Section \ref{ITM}, but for the sake of completeness we also include the proof as in the \eqref{eq:model} equation. The result on well-posedness is the base idea of the corresponding numerical scheme and it also improves the proof of \cite{pakdaman2009dynamics,canizo2019asymptotic} by extending existence and uniqueness for more general types of firing coefficients $p$.

\begin{theorem}
\label{wellp-delay}
      Assume that $p\in W^{1,\infty}(\R^+\times\R^+)$ and $\alpha\in L^1(\R^+)$ is bounded. Then for a nonnegative $n^0\in L^1(\R^+)$,  Equation \eqref{eq:delay} has a unique solution $n\in \mathcal{C}\left([0,\infty),L^1(\R^+)\right)$ and $N,X\in\mathcal{C}[0,\infty)$.
\end{theorem}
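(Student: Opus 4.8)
The plan is to mirror the fixed-point construction used for Theorem \ref{wellp-nodelay}, but to run the contraction on the total activity $X$ rather than on the discharge flux $N$; this is what makes the argument unconditional, since no analogue of the smallness hypothesis $\gamma\|n^0\|_1<1$ is needed. Fix $T>0$ and a nonnegative $X\in\mathcal{C}[0,T]$. For such a fixed $X$, the map $(t,s)\mapsto p(s,X(t))$ is a bounded, merely time-dependent damping rate, so the linear theory recalled above yields a unique weak solution $n[X]\in\mathcal{C}([0,T],L^1(\R^+))$ of
\[
\partial_t n+\partial_s n+p(s,X(t))\,n=0,\qquad n(t,0)=\int_0^\infty p(s,X(t))\,n(t,s)\,\dd s,\qquad n(0,\cdot)=n^0,
\]
which is nonnegative, conserves mass, and is given by the characteristics formula with $X(t')$ in place of $N(t')$ in the exponential factors; set $N[X](t):=n[X](t,0)$, so that $0\le N[X](t)\le\|p\|_\infty\|n^0\|_1$ by mass conservation. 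Define $\mathcal{T}\colon\mathcal{C}[0,T]\to\mathcal{C}[0,T]$ by $\mathcal{T}(X)(t):=\int_0^t\alpha(t-\tau)\,N[X](\tau)\,\dd\tau$. Since $\alpha\in L^1(\R^+)$, $\alpha\ge0$ and $N[X]$ is bounded and continuous, $\mathcal{T}(X)$ is continuous, nonnegative, and $\|\mathcal{T}(X)\|_\infty\le\|\alpha\|_1\|p\|_\infty\|n^0\|_1$, so $\mathcal{T}$ maps the closed ball of nonnegative functions of that radius into itself. A fixed point $\overline X$ of $\mathcal{T}$ produces a solution of \eqref{eq:delay} on setting $\overline n:=n[\overline X]$, $\overline N:=N[\overline X]$, with mass conservation inherited from the linear theory.

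The core step is to show that $\mathcal{T}$ is a contraction for $T$ small. Given $X_1,X_2\in\mathcal{C}[0,T]$, the same decomposition $\|n[X_1](t,\cdot)-n[X_2](t,\cdot)\|_1\le A_1+A_2+A_3$ as in the proof of Theorem \ref{wellp-nodelay} (now with $X_i$ in the exponents) gives, after bounding each term by $|e^{-a}-e^{-b}|\le|a-b|$,
\[
\|n[X_1](t,\cdot)-n[X_2](t,\cdot)\|_1\le C\,t\,\|X_1-X_2\|_\infty+\int_0^t|N[X_1](\tau)-N[X_2](\tau)|\,\dd\tau,
\]
where $C$ depends only on $\|n^0\|_1$, $\|p\|_\infty$, $\|\partial_N p\|_\infty$ and $T$; the one genuinely new feature is that the term corresponding to $A_2$ leaves a Volterra integral $\int_0^t|N[X_1]-N[X_2]|$ rather than a quantity directly controlled by $\|X_1-X_2\|_\infty$. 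Coupling this with
\[
|N[X_1](t)-N[X_2](t)|\le\|\partial_N p\|_\infty\|n^0\|_1\,|X_1(t)-X_2(t)|+\|p\|_\infty\,\|n[X_1](t,\cdot)-n[X_2](t,\cdot)\|_1
\]
and applying Grönwall's lemma to $u(t):=\sup_{\tau\le t}|N[X_1](\tau)-N[X_2](\tau)|$ yields $u(T)\le C'\|X_1-X_2\|_\infty$ with $C'$ bounded as $T\to0$. Since $\alpha$ is bounded,
\[
\|\mathcal{T}(X_1)-\mathcal{T}(X_2)\|_\infty\le\|\alpha\|_\infty\int_0^T|N[X_1]-N[X_2]|\,\dd\tau\le\|\alpha\|_\infty\,T\,C'\,\|X_1-X_2\|_\infty,
\]
which is a contraction as soon as $\|\alpha\|_\infty T C'<1$; Banach's fixed-point theorem then gives a unique $\overline X$, hence a unique weak solution $\overline n$ on $[0,T]$ with $\overline N,\overline X\in\mathcal{C}[0,T]$.

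Finally, global existence follows by continuation. By mass conservation $\|\overline n(T,\cdot)\|_1=\|n^0\|_1$, so the constants above — and hence the length $T$ of the existence interval — depend only on $\|n^0\|_1$, $\|p\|_{W^{1,\infty}}$, $\|\alpha\|_{L^1}$ and $\|\alpha\|_{L^\infty}$, not on the starting time. Restarting at $t=T$ with datum $\overline n(T,\cdot)$, the already-computed part $\int_0^T\alpha(t-\tau)\overline N(\tau)\,\dd\tau$ of the activity enters the redefined $\mathcal{T}$ on $[T,2T]$ as a fixed continuous forcing term (this is the role played by a prescribed past in Remark \ref{past}) and cancels in the difference $\mathcal{T}(X_1)-\mathcal{T}(X_2)$, so the contraction estimate is unchanged; iterating covers $[0,\infty)$ and gives uniqueness on every $[0,T']$. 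I expect the Volterra-type coupling in the contraction step to be the main obstacle — controlling $N[X_1]-N[X_2]$ genuinely needs the Grönwall argument above rather than a one-line estimate — whereas the absence of any smallness assumption on $\|\partial_N p\|_\infty\|n^0\|_1$ is a dividend of $X(0)=0$, which eliminates the implicit equation for the boundary value that caused the ill-posedness of \eqref{eq:model}.
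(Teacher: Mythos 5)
Your proof is correct and follows the same overall skeleton as the paper's: a contraction mapping on the activity $X\in\mathcal{C}[0,T]$, the method-of-characteristics decomposition into the three terms $A_1,A_2,A_3$, and global extension by splitting the convolution $\int_0^t\alpha(t-\tau)N(\tau)\,\dd\tau$ at $t=T$ and treating the known part as a forcing. There are two genuine structural differences worth noting. First, your map $\mathcal{T}(X)(t)=\int_0^t\alpha(t-\tau)N[X](\tau)\,\dd\tau$ is defined \emph{explicitly} from the linear solution, whereas the paper defines $H(X)=\psi(n[X])$ where $\psi$ itself solves the inner fixed-point problem \eqref{fixed-X}; this forces the paper to prove Lemma \ref{fixedpoint-delay} (with its own smallness condition on $T$) before $H$ is even well defined. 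Your formulation eliminates that inner fixed point entirely, at no cost, since the two maps have the same fixed points. Second, the self-referential term coming from $A_2$: the paper bounds it by $T\|p\|_\infty\sup_t\|n_1-n_2\|_1$ and absorbs it into the left-hand side for $T<1/\|p\|_\infty$, while you keep it as a Volterra integral in $|N[X_1]-N[X_2]|$ and close the estimate with Gr\"onwall. Both work; the paper's absorption is the shorter "one-line estimate" you suspected might not exist, but your Gr\"onwall route is equally valid and arguably more transparent about where the smallness of $T$ is used. Your closing observation --- that the unconditional nature of the result (no hypothesis like $\gamma\|n^0\|_1<1$) stems from $X$ entering only through the time integral with $X(0)=0$, so no implicit equation for the boundary value arises --- is exactly the right explanation and matches the paper's framing.
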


For the proof we need the following lemma, which is the analogous of Lemma \ref{fixedpoint-itm}.

\begin{lemma}
\label{fixedpoint-delay}
    Consider a nonnegative function $n\in\mathcal{C}\left([0,T],L^1(\R^+)\right)$. Then for $T$ small enough, the exists a unique solution $X\in\mathcal{C}([0,T])$ of the integral equation
    \begin{equation}
    \label{fixed-X}
       X(t)=F(X(t))\coloneqq \int_0^t\int_0^\infty\alpha(t-\tau)p(s,X(\tau))n(\tau,s)\,\dd s\,\dd \tau, 
    \end{equation}
    that we call $X\coloneqq\psi(n)$, where the map $\psi\colon \mathcal{C}\left([0,T],L^1(\R^+)\right)\to\mathcal{C}([0,T])$ satisfies the following estimate
    \begin{equation}
    \label{psi-lip-d}
        \|\psi(n^1)-\psi(n^2)\|_\infty\le \tfrac{T\|\alpha\|_\infty\|p\|_\infty}{1-T\|\alpha\|_\infty\|\partial_X p\|_\infty\|n^0\|_1}\sup_{t\in[0,T]}\|n^1(t,\cdot)-n^2(t,\cdot)\|_1,
    \end{equation}
    for nonnegative integrable functions $n^1,n^2$ with $\|n^1(t,\cdot)\|_1=\|n^2(t,\cdot)\|_1=\|n^0\|_1$ for all $t\in[0,T]$.
\end{lemma}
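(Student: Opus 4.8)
The plan is to treat \eqref{fixed-X} not as a pointwise algebraic equation but as a \emph{Volterra} fixed-point problem in the Banach space $\mathcal{C}([0,T])$ equipped with the sup norm: the right-hand side of \eqref{fixed-X} depends on the whole restriction $X|_{[0,t]}$, so for a fixed $n$ I introduce the operator $\mathcal{T}_n\colon\mathcal{C}([0,T])\to\mathcal{C}([0,T])$,
$$(\mathcal{T}_n X)(t)\coloneqq\int_0^t\int_0^\infty\alpha(t-\tau)\,p(s,X(\tau))\,n(\tau,s)\,\dd s\,\dd\tau,$$
and look for its unique fixed point, which will be $\psi(n)$. First I would check $\mathcal{T}_n$ is well defined: the function $\tau\mapsto g_X(\tau)\coloneqq\int_0^\infty p(s,X(\tau))n(\tau,s)\,\dd s$ is continuous (continuity of $X$ and of $p$, and $n\in\mathcal{C}([0,T],L^1)$) and bounded by $\|p\|_\infty\sup_{[0,T]}\|n(\tau,\cdot)\|_1$, so $t\mapsto\int_0^t\alpha(t-\tau)g_X(\tau)\,\dd\tau$ is the restriction to $[0,T]$ of the convolution of $\alpha\in L^1$ with an $L^\infty$ function, hence bounded and uniformly continuous; thus $\mathcal{T}_nX\in\mathcal{C}([0,T])$ with $\|\mathcal{T}_nX\|_\infty\le T\|\alpha\|_\infty\|p\|_\infty\sup_{[0,T]}\|n(\tau,\cdot)\|_1$. (Under the standing simplifying assumption that $\alpha$ is smooth and bounded this step is immediate.)

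Next I would show $\mathcal{T}_n$ is a contraction when $T$ is small. For $X_1,X_2\in\mathcal{C}([0,T])$, using $\alpha\ge 0$ and the mean value bound $|p(s,X_1(\tau))-p(s,X_2(\tau))|\le\|\partial_X p\|_\infty|X_1(\tau)-X_2(\tau)|$,
$$|(\mathcal{T}_nX_1)(t)-(\mathcal{T}_nX_2)(t)|\le\|\alpha\|_\infty\|\partial_X p\|_\infty\int_0^t|X_1(\tau)-X_2(\tau)|\,\|n(\tau,\cdot)\|_1\,\dd\tau\le T\|\alpha\|_\infty\|\partial_X p\|_\infty\|n^0\|_1\,\|X_1-X_2\|_\infty,$$
where we used $\|n(\tau,\cdot)\|_1=\|n^0\|_1$. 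Hence $\mathcal{T}_n$ is a contraction as soon as $T\|\alpha\|_\infty\|\partial_X p\|_\infty\|n^0\|_1<1$, and Banach's fixed point theorem yields the unique $X=\psi(n)\in\mathcal{C}([0,T])$; the same $T$ works for all admissible $n$ since the constant depends on $n$ only through $\|n^0\|_1$.

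Finally, for the Lipschitz estimate \eqref{psi-lip-d}, I would set $X_i=\psi(n^i)$, subtract the two fixed-point identities, and split the integrand as $p(s,X_1)n^1-p(s,X_2)n^2=p(s,X_1)(n^1-n^2)+\big(p(s,X_1)-p(s,X_2)\big)n^2$. Bounding the first piece by $\|\alpha\|_\infty\|p\|_\infty\|n^1(\tau,\cdot)-n^2(\tau,\cdot)\|_1$ and the second by $\|\alpha\|_\infty\|\partial_X p\|_\infty|X_1(\tau)-X_2(\tau)|\,\|n^0\|_1$, integrating over $\tau\in[0,t]$ and taking the sup in $t$ gives
$$\|X_1-X_2\|_\infty\le T\|\alpha\|_\infty\|p\|_\infty\sup_{[0,T]}\|n^1(\tau,\cdot)-n^2(\tau,\cdot)\|_1+T\|\alpha\|_\infty\|\partial_X p\|_\infty\|n^0\|_1\,\|X_1-X_2\|_\infty;$$
since $T\|\alpha\|_\infty\|\partial_X p\|_\infty\|n^0\|_1<1$, the last term absorbs into the left-hand side and \eqref{psi-lip-d} follows.

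I do not anticipate a genuine obstacle: this is the delayed counterpart of Lemma \ref{fixedpoint-itm}, and the only points needing care are (a) recognizing the Volterra, history-dependent nature of \eqref{fixed-X}—so that one genuinely works in $\mathcal{C}([0,T])$ and the smallness of $T$ is actually used—and (b) the continuity of $\mathcal{T}_nX$ when $\alpha$ is merely $L^1\cap L^\infty$, which is handled by continuity of the convolution of $L^1$ with $L^\infty$. A Bielecki-type weighted norm $\|X\|_\lambda=\sup_{t\in[0,T]}e^{-\lambda t}|X(t)|$ would remove the restriction on $T$, but this is unnecessary here since mass conservation lets one iterate the construction on consecutive short intervals, exactly as in the proof of Theorem \ref{wellp-nodelay}.
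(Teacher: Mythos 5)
Your proposal is correct and follows essentially the same route as the paper: a Banach fixed-point argument for the Volterra operator on $\mathcal{C}([0,T])$ under the smallness condition $T\|\alpha\|_\infty\|\partial_X p\|_\infty\|n^0\|_1<1$, followed by the same add-and-subtract splitting of $p(s,X_1)n^1-p(s,X_2)n^2$ and absorption of the $\|X_1-X_2\|_\infty$ term to obtain \eqref{psi-lip-d}. The only (harmless) differences are that you attach the $\partial_X p$ increment to $n^2$ where the paper attaches it to $n^1$, and that you spell out the continuity of $\mathcal{T}_nX$, which the paper leaves implicit.
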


\begin{proof}
    Observe that the map $F\colon\mathcal{C}[0,T]\to\mathcal{C}[0,T]$ satisfies the following estimate
 \begin{equation*}
    \|F(X_1)-F(X_2)\|_\infty\le T\|\alpha\|_\infty\|\partial_X p\|_\infty\|n\|\|X_1-X_2\|_\infty,
 \end{equation*}
    with $\|n\|=\sup_{t\in[0,T]}\|n(t,\cdot)\|_1$. Hence for $T>0$ such that $T\|\alpha\|_\infty\|\partial_X p\|_\infty\|n\|<1$ the map $F$ is a contraction and then the map $\psi$ is well-defined.    Let $n^1,n^2\in\mathcal{C}([0,T],L^1(\R^+))$ with $\|n^1(t,\cdot)\|_1=\|n^2(t,\cdot)\|_1=\|n^0\|_1$ for all $t\in[0,T]$. Then we have the following inequality
    \begin{equation*}
        \begin{split}
            |X_1-X_2|(t)&\le \int_0^t\int_0^\infty\alpha(t-\tau)\left(|p(s,X_1(\tau))-p(s,X_2(\tau))|n^1(\tau,s) +  p(s,X_2(\tau))|n^1-n^2|(\tau,s)\right)\dd s\dd \tau\\
            &\le T\|\alpha\|_\infty\|\partial_X p\|_\infty\|n^0\|_1\|X_1-X_2\|_\infty+T\|\alpha\|_\infty\|p\|_\infty\sup_{t\in[0,T]}\|n^1(t,\cdot)-n^2(t,\cdot)\|_1,
        \end{split}
    \end{equation*}
    and therefore for $T>0$ such that $T\|\alpha\|_\infty\|\partial_X p\|_\infty\|n^0\|_1<1$ we get
 \begin{equation*}
    \|X_1-X_2\|_\infty\le\frac{T\|\alpha\|_\infty\|p\|_\infty}{1-T\|\alpha\|_\infty\|\partial_X p\|_\infty\|n^0\|_1}\sup_{t\in[0,T]}\|n^1(t,\cdot)-n^2(t,\cdot)\|_1,
 \end{equation*}
    and estimate \eqref{psi-lip-d} holds.
\end{proof}

With this lemma, we continue the proof of the Theorem \ref{wellp-delay}.

\begin{proof}
    Consider $T>0$ and a given nonnegative $X\in\mathcal{C}[0,T]$. Like in the proof of Theorem \ref{wellp-nodelay}, we define $n[X]\in\mathcal{C}([0,T],L^1(\R^+))$ as the weak solution of the linear equation
\begin{equation*}
    \begin{cases}
    \partial_tn+\partial_s n+p(s,X(t))n = 0,  & t>0,\: s>0, \\
    n(t,s=0)=N(t)=\int_0^{+\infty}p(s,X(t))n(t,s) \dd s,& t>0,\\
    n(0,s)=n^0(s)\geq0,& s\geq 0.
    \end{cases}
\end{equation*}
    which can be expressed trough the method of characteristics
 \begin{equation*}
    n(t,s)=n^0(s-t)e^{-\int_0^t p(s-t+t',X(t'))\dd t'}\chi_{\{s>t\}}+N(t-s)e^{-\int_0^s p(s',X(t-s+s'))\dd s'}\chi_{\{t>s\}}.
 \end{equation*}
    From the mass conservation property we know that $\|n[X](t)\|_1=\|n^0\|_1$.
    
    On the other hand, for a given $n\in\mathcal{C}([0,T],L^1(\R^+))$ and for $T>0$ small enough, let $\psi\colon \mathcal{C}\left([0,T],L^1(\R^+)\right)\to\mathcal{C}[0,T]$ the unique solution $X\in\mathcal{C}[0,T]$ of Equation \eqref{fixed-X}. Under this setting, we get a solution of the nonlinear equation \eqref{eq:delay} if only if we find $X\in\mathcal{C}[0,T]$ such that is a solution of the equation
\begin{equation*}
X=H(X)\coloneqq\psi\left(n[X]\right),
\end{equation*}
    where $H\colon\mathcal{C}[0,T]\to\mathcal{C}[0,T]$.
    
    We assert that for $T$ small enough the map $H$ is a contraction following the proof Theorem \ref{wellp-nodelay}. For nonnegative functions $X_1,X_2\in\mathcal{C}[0,T]$, we get from the method of characteristics we get the following inequality
\begin{equation*}
    \int_0^\infty |n[X_1]-n[X_2]|(t,s)\dd s\le A_1+A_2+A_3,    
\end{equation*}
    where $A_1,A_2,A_3$ are given by
    \begin{eqnarray*}
        A_1&=&\int_0^\infty n^0(s)\left|e^{-\int_0^t p(s+t',X_1(t'))\dd t'}-e^{-\int_0^t p(s+t',X_2(t'))\dd t'}\right|\,\dd s,\\
        A_2&=&\int_0^t |N_1-N_2|(t-s) e^{-\int_0^s p(s',X_1(t-s+s'))\dd s'}\dd s,\\
        A_3&=&\int_0^t N_2(t-s)\left|e^{-\int_0^s p(s',X_1(t-s+s'))\dd s'}-e^{-\int_0^s p(s',X_2(t-s+s'))\dd s'} \right|\dd s.
    \end{eqnarray*}
    We proceed by estimating each term. For simplicity we write $n_1=n[X_1],n_2=n[X_2]$, so that for $A_1$ we have
 \begin{equation*}
    A_1\le \int_0^\infty n^0(s)\int_0^t|p(s+t',X_1(t'))-p(s+t',X_2(t'))|\,\dd t'\,\dd s \le T\|n^0\|_1 \|\partial_N p\|_\infty \|X_1-X_2\|_\infty,
 \end{equation*}
    while for $A_2$ we have
    \begin{equation*}
        \begin{split}
           A_2 & \le \int_0^t\int_0^\infty|p(s',X_1(s))-p(s',X_2(s))|n_1(s,s')\dd s'\,\dd s+\int_0^t\int_0^\infty p(s',X_2(s))|n_1-n_2|(s,s')\dd s'\,\dd s\\
           &\le T\|\partial_X p\|\|n^0\|_1 + T\|p\|_\infty\sup_{t\in[0,T]}\|n^1(t,\cdot)-n^2(t,\cdot)\|_1,
        \end{split}
    \end{equation*}
    and for $A_3$ we get
    \begin{equation*}
    \begin{split}
        A_3 &\le \|p\|_\infty\|n^0\|_1 \int_0^t\int_0^s|p(s',X_1(t-s+s'))-p(s',X_2(t-s+s'))|\dd s'\,\dd s\\
        &\le\frac{T^2}{2} \|p\|_\infty\|n^0\|_1\|\partial_N p\|_\infty\|X_1-X_2\|_\infty. 
    \end{split}
    \end{equation*}
    By combining these estimates, we get for $T<\frac{1}{\|p\|_\infty}$  the following inequality
    \begin{equation}
    \label{nX1-nX2}
       \sup_{t\in [0,T]}\|n_1(t,\cdot)-n_2(t,\cdot)\|_1\le\tfrac{T}{1-T\|p\|_\infty}\left(2\|n^0\|_1\|\partial_N p\|_\infty+\frac{T}{2}\|p\|_\infty\|n^0\|_1\|\partial_N p\|_\infty\right)\|X_1-X_2\|_\infty.
    \end{equation}
    From the mass conservation property, we get from \eqref{psi-lip-d} and \eqref{nX1-nX2} that the following estimate holds for $H$
    \begin{equation*}
        \begin{split}
          \|H(X_1)-H(X_2)\|_\infty&\le \tfrac{T^2\|p\|_\infty^2\|\alpha\|_\infty}{(1-T\|\alpha\|_\infty\|n^0\|_1\|\partial_X p\|_\infty)(1-T\|p\|_\infty)}\left(2\|n^0\|_1\|\partial_N p\|_\infty+\frac{T}{2}\|p\|_\infty\|n^0\|_1\|\partial_N p\|_\infty\right)\|X_1-X_2\|_\infty.  
        \end{split}
    \end{equation*}
    For $T>0$ small enough we get a unique fixed point of $H$ by contraction principle, implying the existence of a unique solution $n\in \mathcal{C}\left([0,T],L^1(\R^+)\right)$ with $N,X\in\mathcal{C}[0,T]$ of Equation \eqref{eq:delay}. In order to extend the solution for all times, we split the integral involving the distributed delay
   \begin{equation*}
    X(t)=\int_0^T \alpha(t-\tau)N(\tau)\dd \tau+\int_T^t\alpha(t-\tau)N(\tau)\dd \tau.
   \end{equation*}
    Since the first term is already known and $T$ is independent of the initial data, we can reapply the argument on existence to have the solution of Equation \eqref{eq:delay} defined for all $t\in[T,2T]$. By iterating the splitting argument involving $X(t)$, we conclude that the solution of Equation \eqref{eq:delay} is defined for all $t>0$.
\end{proof}

\begin{remark}
    As in Theorem \ref{wellp-nodelay}, the regularity of the coefficient $p$ is not fundamental for the proof and Theorem \ref{wellp-nodelay} is still valid for coefficients $p$ of the form
    \begin{equation*}
    p(s,X)=\varphi(X)\chi_{\{s>\sigma(X)\}},      
    \end{equation*}
    with $\varphi$ and $\sigma$ Lipschitz bounded functions. Moreover, the proof can be adapted when $p$ is not necessarily bounded, but the continuous solution might not be defined for all $t>0$. We can extend the solution as long as $X(t)<\infty$.

    For example, consider $p(s,X)=X^2+1$ and $\alpha(t)=e^{-t}$, so that from the mass-conservation in the \eqref{eq:delay} equation, we get
    \begin{equation*}
    N(t)=X^2(t)+1,\:X(t)=\int_0^t e^{-(t-s)}(X^2(s)+1)\,ds,
    \end{equation*}
    and by taking derivative for $X(t)$, we deduce the following differential equation
    \begin{equation*}
    X'(t)=X^2(t)-X(t)+1,\: X(0)=0.
    \end{equation*}
    Since the solution of this equation blows up for a finite $T>0$, we conclude that the solution of \eqref{eq:delay} cannot be  continuously extended for all $t>0$. In particular, one can show that $n(t,\cdot)\rightharpoonup\delta_0$, a Dirac mass, when $t\to T$.
    \end{remark}

\subsection{Numerical scheme for DDM}
In this section, we present the numerical analysis of the \eqref{eq:delay} equation. Using the same discretization and notation introduced in Section \ref{ITM}, Equation \eqref{eq:delay} can be solved numerically using the following explicit scheme

\begin{equation}\label{eq:nn}
    n_j^{m+1}=n_j^m-\frac{\Delta t}{\Delta s}(n_{j}^m-n_{j-1}^m)-\Delta t p(s_j,X(t^m))n_j^m,\quad j\in\N.
\end{equation}
In particular for $j=1$
\begin{equation*} 
    n_1^{m+1}=n_1^m-\frac{\Delta t}{\Delta s}(n_{1}^m-N(t^m))-\Delta t p(s_1,X(t^m))n_1^m, 
\end{equation*}
where 
\begin{equation}\label{eq:Nm}
    N(t^m)=n(t^m,0)\approx \Delta s \sum_{j\in\N} p(s_j,X(t^m))n_j^m,
\end{equation}
and using a trapezoidal quadrature rule we have
\begin{equation}\label{eq:Xm}
    X(t^m)=\int_0^{t^m} N(t^m-s)\alpha(s)\,\dd s\approx \frac{\Delta t}{2} \sum_{k=0}^m N(t^k)\alpha_{m-k},
\end{equation}
where $\alpha_{m}=\alpha(t^m)$ and we denote $\|\alpha\|_1\coloneqq\sum_{k\in\N}\Delta t|\alpha_k|$. If we set $X^m\coloneqq X(t^m)$ and $N^m\coloneqq N(t^m)$, we can combine the equations \eqref{eq:Nm} and \eqref{eq:Nm} and to solve for $X^m$

\begin{equation}\label{eq:findX}
    X^m=\frac{\Delta t}{2} \left( \Delta s \sum_{j\in\N}p(s_j,X^m)n_j^m \alpha_0 + \sum_{k=0}^{m-1} N^k\alpha_{m-k} \right),
\end{equation}
and then we obtain $N^m$ from \eqref{eq:Nm}. In that regard, the numerical method is given as follows.

\begin{Alg}{DDM numerical scheme }\label{Alg2}
 \begin{itemize}
 \item[] {Input: Approximate initial data $\{n_j^0\}_{j\in\N}$}
 \item[] \begin{equation*}
    X^0\leftarrow 0,\quad N^{0}\leftarrow \Delta s \sum_{j\in\N} p(s_j,X^0)n_j^{0}.  
 \end{equation*}
 \item[] Choose $\Delta t$ such that 
 \begin{equation}
 \label{CFL2}
      \Delta t < \min\left\{\left(\frac{1}{\Delta s}+\|p\|_\infty\right)^{-1},\frac{2}{\alpha_0\|\partial_Xp\|_{\infty}\|n^0\|_1}\right\}
 \end{equation}
 \item[] {\bf For} $m\in\N_0$
 \item[] \hspace{0.5cm}{\bf Do} $j\in\N$, 
    \begin{equation*}
    n_j^{m+1} \leftarrow
    \begin{cases}
      n_1^m-\frac{\Delta t}{\Delta s}(n_{1}^m-N^m)-\Delta t p(s_1,N^m)n_1^m & j=1\\
      n_j^m-\frac{\Delta t}{\Delta s}(n_{j}^m-n_{j-1}^m)-\Delta t p(s_j,N^m)n_j^m & j>1
     \end{cases}
    \end{equation*}
 \item[] \hspace{0.5cm}{\bf end} 
 \item[] \hspace{0.5cm} Solve for $X^{m+1}$ 
 \begin{equation}\label{eqfijo-X}
     X^{m+1}=\frac{\Delta t}{2} \left( \Delta s \sum_{j\in\N}p(s_j,X^{m+1})n_j^{m+1} \alpha_0 + \sum_{k=0}^{m} N^k\alpha_{m-k}. \right)
 \end{equation}
 \item[]
 \begin{equation*}
    N^{m+1}\leftarrow \Delta s \sum_{j\in\N} p(s_j,X^{m+1} )n_j^{m+1} 
 \end{equation*}
 \item[] {\bf end} 
 \item[] { Output:  approximate solution $\{n_j^{m+1}\}_{j\in\N}$  and $X^{m+1},\,N^{m+1}$ at time $t^{m+1}=(m+1)\Delta t$}.
 \end{itemize} 
\end{Alg}

Analogously to the numerical scheme for the \eqref{eq:model} equation. The solution of Equation \eqref{eqfijo-X} for $X^{m+1}$ can be solved with different numerical methods. Unlike the instantaneous transmission model, 
there is no restriction on the coefficient $p$ to have unique solution of Equation \eqref{eq:delay}. Hence, by following the idea of Lemma \ref{fixedpoint-delay} and the contraction principle, the solution of Equation \eqref{eqfijo-N} $N^{m+1}$ can be approximated in terms of $N^{m}$ through the following formula if $\Delta t$ is small enough:
\begin{equation*}
X^{m+1}=\frac{\Delta t}{2} \left( \Delta s \sum_{j\in\N}p(s_j,X^{m})n_j^{m+1} \alpha_0 + \sum_{k=0}^{m} N^k\alpha_{m-k} \right).
\end{equation*}
For simplicity in the estimates we assume that we can compute the solution of Equation \eqref{eqfijo-X} exactly, but the results remain valid if we take into account a specific method to get an approximation.

In order to prove the convergence of the upwind scheme we follow the ideas of the previous subsection on well-posedness and we prove a BV-estimate that will be the crucial in the analysis. For simplicity we assume that initial data $n^0$ is compactly supported, but the theoretical results still hold when the initial data $n^0$ vanishes at infinity.

As in the case of the \eqref{eq:model} equation, we get the corresponding lemmas on $L^1$ and $L^\infty$ norms.

 \begin{lemma}{(\textbf{$L^{1}$}-norm)} Numerical approximation obtained with the Algorithm \ref{Alg2} satisfies 
 \begin{equation*}
     \|n^{m}\|_1:=\sum_{j\in\N}\Delta s n_j^m=\|n^{0}\|_1, \quad m\in\N.
 \end{equation*}
 \end{lemma}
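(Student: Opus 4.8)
The plan is to mirror the proof of Lemma~\ref{L1-Mass} for the ITM scheme, since the only difference is that the firing coefficient is now evaluated at the delayed activity $X^m$ instead of $N^m$, which plays no role in the mass balance. First I would multiply the update formula \eqref{eq:nn} by $\Delta s$ and sum over $j\in\N$, which gives
\begin{equation*}
\sum_{j\in\N}\Delta s\, n_j^{m+1}
= \sum_{j\in\N}\Delta s\, n_j^m
- \Delta t\sum_{j\in\N}\bigl(n_j^m-n_{j-1}^m\bigr)
- \Delta t\sum_{j\in\N}\Delta s\, p(s_j,X^m)\,n_j^m .
\end{equation*}

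Next I would evaluate the telescoping middle sum. As observed in the proof of Lemma~\ref{lim-delta}, compact support of $n^0$ (or decay at infinity) propagates to each $n^m$, so $n_j^m\to 0$ as $j\to\infty$ and $\sum_{j\in\N}(n_j^m-n_{j-1}^m)=-n_0^m=-N^m$, using the boundary convention $n_0^m=N^m$. The remaining sum equals $N^m$ by the definition \eqref{eq:Nm} of the discrete flux, so the two $\Delta t$-terms cancel and $\sum_{j\in\N}\Delta s\, n_j^{m+1}=\sum_{j\in\N}\Delta s\, n_j^m$. Induction on $m$ then yields $\|n^m\|_1=\|n^0\|_1$ for all $m\in\N$.

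There is no genuine obstacle: this is the discrete counterpart of the formal mass-conservation identity \eqref{massconservation}, and the distributed delay enters only through the argument of $p$, which is irrelevant to the telescoping/cancellation mechanism that drives the proof. The single point meriting a word of care is the passage $j\to\infty$ in the telescoping sum, which is legitimate precisely because the discrete solution inherits compact support from the initial data.
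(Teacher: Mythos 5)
Your proof is correct and follows essentially the same route as the paper, which simply refers back to the ITM mass-conservation lemma: multiply \eqref{eq:nn} by $\Delta s$, sum over $j$, and use that the boundary inflow $N^m$ exactly cancels the total loss $\Delta s\sum_j p(s_j,X^m)n_j^m$ by the definition \eqref{eq:Nm}. Your additional remark that compact support (inherited from $n^0$) justifies the telescoping limit is a reasonable bit of extra care that the paper leaves implicit.
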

 \begin{proof}
The proof is the same as Lemma \ref{L1-Mass}.
 \end{proof}

\begin{lemma}{(\textbf{$L^{\infty}$}-norm)}
 Assume that $n^0\in L^\infty(\R^+)$ is nonnegative, then under the condition \eqref{CFL2}, Equation \eqref{eqfijo-X} has a unique solution and the numerical solution obtained by Algorithm \ref{Alg2}, satisfies the following estimates
 \begin{eqnarray*}
 0\le n_j^m\le \|n^0\|_{\infty}, &  0\le X^m\le \|p\|_{\infty}\|n^0\|_1\|\alpha\|_1, &  0\le N^m\le \|p\|_{\infty}\|n^0\|_1\quad\textrm{for all}\:j,m\in\N.
 \end{eqnarray*}
\end{lemma}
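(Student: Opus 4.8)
The plan is to argue by induction on $m$, exactly mirroring the proof of Lemma~\ref{L-infty} for the ITM scheme but with the extra wrinkle that at each step one must first check that the fixed-point equation \eqref{eqfijo-X} for $X^{m+1}$ has a unique solution before the scheme is even well-defined. First I would establish the bound $0\le X^m\le\|p\|_\infty\|n^0\|_1\|\alpha\|_1$ and the unique solvability of \eqref{eqfijo-X} simultaneously: fixing the already-computed data $n^{m+1}=(n^{m+1}_j)$ (nonnegative, with $\|n^{m+1}\|_1=\|n^0\|_1$) and $N^0,\dots,N^m$, define
\begin{equation*}
G(X)=\frac{\Delta t}{2}\Big(\Delta s\sum_{j\in\N}p(s_j,X)n_j^{m+1}\alpha_0+\sum_{k=0}^m N^k\alpha_{m-k}\Big).
\end{equation*}
The map $X\mapsto X-G(X)$ is strictly increasing since its derivative is $1-\frac{\Delta t}{2}\Delta s\alpha_0\sum_j\partial_Xp(s_j,X)n_j^{m+1}\ge 1-\frac{\Delta t}{2}\alpha_0\|\partial_Xp\|_\infty\|n^0\|_1>0$ by the second part of the CFL condition \eqref{CFL2}; together with $0\le G(X)$ bounded above (using $0\le N^k\le\|p\|_\infty\|n^0\|_1$ from the induction hypothesis and $\sum_{k=0}^m\Delta t\alpha_{m-k}\le\|\alpha\|_1$), this gives a unique root $X^{m+1}$ lying in a bounded interval, and plugging the crude bound $p\le\|p\|_\infty$ into $G$ yields $0\le X^{m+1}\le\|p\|_\infty\|n^0\|_1\|\alpha\|_1$.

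Next, with $X^{m+1}$ in hand, the bound $0\le N^{m+1}\le\|p\|_\infty\|n^0\|_1$ follows immediately from \eqref{eq:Nm} and the $L^1$-conservation lemma, just as in Lemma~\ref{L-infty}: $0\le N^{m+1}=\Delta s\sum_j p(s_j,X^{m+1})n_j^{m+1}\le\|p\|_\infty\|n^0\|_1$. Then for the density bound I would rewrite the update for $n_j^{m+1}$ as a convex combination,
\begin{equation*}
n_j^{m+1}=\Big(1-\Delta t\big(\tfrac1{\Delta s}+p(s_j,X^m)\big)\Big)n_j^m+\tfrac{\Delta t}{\Delta s}n_{j-1}^m,
\end{equation*}
whose two coefficients are nonnegative by the first part of the CFL condition \eqref{CFL2} and whose sum is $\le 1$; hence $n_j^m\ge0$ is preserved and $n_j^{m+1}\le\max\{n_j^m,n_{j-1}^m\}\le\|n^0\|_\infty$. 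For $j=1$ the boundary term $N^m$ plays the role of $n_0^m$ and satisfies $0\le N^m\le\|p\|_\infty\|n^0\|_1$; if one wants the clean bound $n_1^{m+1}\le\|n^0\|_\infty$ one notes $N^m\le\|n^0\|_\infty$ as well, or simply absorbs it since $N^m$ is itself a subconvex average of the $n_j^m$ weighted by $\Delta s\,p(s_j,X^m)$ with total weight $\le 1$.

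The induction base $m=0$ is the given data: $X^0=0$, $N^0=\Delta s\sum_j p(s_j,0)n_j^0\le\|p\|_\infty\|n^0\|_1$, and $0\le n_j^0\le\|n^0\|_\infty$ by hypothesis. Closing the induction then gives all three bounds for every $j,m\in\N$. The only genuinely new point compared with the ITM case is the interplay between the CFL restriction and the solvability of \eqref{eqfijo-X}; I expect that to be the main obstacle only in the bookkeeping sense — one has to be careful that the monotonicity argument for $X^{m+1}$ uses precisely the second term in the minimum defining \eqref{CFL2}, and that the a priori bounds on the earlier $N^k$ needed to bound $G$ are already furnished by the induction hypothesis, so the argument does not circle back on itself.
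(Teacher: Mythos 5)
Your proposal is correct and follows essentially the same route as the paper: induction on $m$, with the second term in the minimum of \eqref{CFL2} guaranteeing unique solvability of \eqref{eqfijo-X} at each step, the crude bound $p\le\|p\|_\infty$ together with mass conservation giving the bounds on $X^{m+1}$ and $N^{m+1}$, and the convex-combination form of the update giving $0\le n_j^{m+1}\le\|n^0\|_\infty$. The only (cosmetic) difference is that you obtain the fixed point via strict monotonicity of $X\mapsto X-G(X)$ as in Lemma \ref{fixedpoint-itm}, whereas the paper invokes the contraction principle with the same Lipschitz constant $\tfrac{\Delta t}{2}\alpha_0\|\partial_Xp\|_\infty\|n^0\|_1<1$; your parenthetical handling of the $j=1$ boundary cell (asserting $N^m\le\|n^0\|_\infty$, or that the weights $\Delta s\,p(s_j,X^m)$ sum to at most $1$) is not justified in general, but the paper's own proof passes over exactly the same point.
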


\begin{proof}
First, observe that $X^0=0$ and $N^0=\Delta s\sum_{j\in\N} p(s_j,0)n_j^0 \le \|p\|_\infty $. Now, using the CFL condition \eqref{CFL2} we have 
\begin{equation*} 
    0\le n_j^{1}=n_j^0 \left(1-\Delta t \left(\frac{1}{\Delta s}+p(s_j,0) \right) \right) +\frac{\Delta t}{\Delta s} n_{j-1}^0\le \|n^0\|_{\infty},
\end{equation*}
for $j\geq1$. In order to proof existence of $X^1$ which satisfy equation \eqref{eq:findX}, we consider the function 
\begin{equation*}
F(X)=\frac{\Delta t}{2} \left( \Delta s \sum_{j\in\N}p(s_j,X)n_j^1 \alpha_0 + N^0\alpha_{1} \right),
\end{equation*}
and observe that
\begin{eqnarray*}
&0\le F(X)\le\|p\|_{\infty}\|n^0\|_1\|\alpha\|_1,& \text{for all } X\ge 0,\\
&|F(X_1)-F(X_2)|\le \frac{\Delta t}{2}\|n^0\|_1\alpha_0\|\partial_Xp\|_{\infty}|X_1-X_2|,&  \text{for all } X_1,X_2\ge 0.
\end{eqnarray*}
From Condition \eqref{CFL2} we have
\begin{equation*}
\frac{\Delta t}{2}\alpha_0\|\partial_Xp\|_{\infty}\|n^0\|_1<1,
\end{equation*}
so that from contraction principle that there exists a unique $X^1$ such that
\begin{equation*}
X^1=F(X^1)=\frac{\Delta t}{2} \left( \Delta s \sum_{j\in\N}p(s_j,X^1)n_j^1 \alpha_0 + N^0\alpha_{1} \right),
\end{equation*}
and we have that $0\le X^1\le \|p\|_{\infty}\|n^0\|_1 \|\alpha\|_1$. Moreover we get the following estimate
\begin{equation*}
    N^1:= \Delta s \sum_{j\in\N} p(s_j,X^1)n_j^1\le \|p\|_{\infty} \|n^0\|_{1},
\end{equation*}
and we conclude the desired result by iterating this argument for all $m\in\N$.
\end{proof}

We note that in Condition \eqref{CFL2}, the first term in right-hand side corresponds to the CFL condition of the explicit scheme and the second term ensures that the Equation \eqref{eqfijo-X} has a unique solution so that $X^m$ is well-defined.

Next, we proceed with the corresponding BV-estimate that gives the necessary compactness to prove the convergence of the numerical scheme.
 
\begin{lemma}{(\textbf{BV}-estimate)}
\label{BV-lem-d}
Assume that $TV(n^0)<\infty$ and the CFL condition \eqref{CFL}. Then there exist constants $C_1,C_2>0$ (depending only $p,\alpha$ and the norms of $n^0$) such that for $m\in\N$ we have
\begin{equation}
    TV(n^m)\le e^{C_1T} TV(n^0)+C_2(e^{C_1T}-1),
    \label{BV_estimate-d}
\end{equation}
with  $T=m\Delta t$ and $TV(n^m)= \displaystyle\sum_{j=0}^\infty|n_{j+1}^{m}-n_{j}^{m}|$.
\end{lemma}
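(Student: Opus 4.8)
The plan is to replicate, step by step, the argument of Lemma \ref{BV-lem}, the only genuinely new ingredient being the control of the boundary difference $n_1^{m+1}-N^{m+1}$, which now feeds on the delayed activity $X^m$ instead of $N^m$. Writing $\Delta^+ n_j^m=n_{j+1}^m-n_j^m$ with the convention $n_0^m=N^m$, I would first reproduce the interior estimate verbatim: expand $\Delta^+ n_j^{m+1}$ from \eqref{eq:nn}, split off the term $n_j^m\bigl(p(s_{j+1},X^m)-p(s_j,X^m)\bigr)$, which is bounded by $\Delta s\,\|\partial_s p\|_\infty n_j^m$ since $p\in W^{1,\infty}$, take absolute values, use the CFL part of \eqref{CFL2}, sum over $j\ge 1$, and invoke mass conservation to discard the $-\Delta t\sum_j p(s_{j+1},X^m)|\Delta^+ n_j^m|$ term. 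This gives, exactly as in \eqref{cota2},
\[
\sum_{j\ge 1}|\Delta^+ n_j^{m+1}|\le \sum_{j\ge 1}|\Delta^+ n_j^{m}|+\tfrac{\Delta t}{\Delta s}|\Delta^+ n_0^{m}|+\Delta t\,\|\partial_s p\|_\infty\|n^0\|_1 .
\]

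For the boundary term, the computation leading to \eqref{cota3} gives, using $1-\tfrac{\Delta t}{\Delta s}\ge 0$,
\[
|\Delta^+ n_0^{m+1}|=|n_1^{m+1}-N^{m+1}|\le\bigl(1-\tfrac{\Delta t}{\Delta s}\bigr)|\Delta^+ n_0^{m}|+|N^{m+1}-N^{m}|+\Delta t\,\|p\|_\infty\|n^0\|_\infty ,
\]
so everything reduces to estimating $|N^{m+1}-N^m|$. Since $N^m=\Delta s\sum_j p(s_j,X^m)n_j^m$, I would split the variation in $X$ from the variation in $n$, and bound $\Delta s\sum_j|n_j^{m+1}-n_j^m|\le\Delta t\,TV(n^m)+\Delta t\,\|p\|_\infty\|n^0\|_1$ directly from \eqref{eq:nn}, to obtain
\[
|N^{m+1}-N^{m}|\le\|\partial_X p\|_\infty\|n^0\|_1\,|X^{m+1}-X^{m}|+\|p\|_\infty\Delta t\bigl(TV(n^m)+\|p\|_\infty\|n^0\|_1\bigr).
\]

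The crux is therefore a uniform bound $|X^{m+1}-X^m|\le C_X\Delta t$. Here I would start from the discrete convolution \eqref{eq:Xm}, reindex to get $X^{m+1}-X^m=\tfrac{\Delta t}{2}\bigl[N^0\alpha_{m+1}+\sum_{l=0}^m(N^{m+1-l}-N^{m-l})\alpha_l\bigr]$, and perform a summation by parts to transfer the increments onto the kernel,
\[
X^{m+1}-X^m=\tfrac{\Delta t}{2}\Bigl[N^0\alpha_{m+1}+N^{m+1}\alpha_0-N^0\alpha_m+\sum_{l=1}^m N^{m+1-l}(\alpha_l-\alpha_{l-1})\Bigr].
\]
Using $0\le N^k\le\|p\|_\infty\|n^0\|_1$ from the $L^\infty$-lemma together with the discrete total variation $\sum_l|\alpha_l-\alpha_{l-1}|$ of the sampled kernel — finite and bounded uniformly in $\Delta t$ because $\alpha$ is smooth with integrable derivative — yields $|X^{m+1}-X^m|\le C_X\Delta t$ with $C_X$ depending only on $\|p\|_\infty$, $\|n^0\|_1$, $\|\alpha\|_\infty$ and $\|\alpha'\|_{L^1}$, and independent of $\Delta t,\Delta s,m$. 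This is the step I expect to be the real obstacle: for a merely integrable $\alpha$ this clean $O(\Delta t)$ bound is unavailable, and one must instead set up the coupled discrete inequality for the pair $\bigl(TV(n^m),|N^{m+1}-N^m|\bigr)$, absorb the diagonal term $\tfrac{\Delta t}{2}\alpha_0\|\partial_X p\|_\infty\|n^0\|_1<1$ furnished by \eqref{CFL2}, and close it with a discrete Gronwall inequality with convolution memory, the finiteness of $\|\alpha\|_1$ keeping the kernel mass under control.

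Finally, adding the interior estimate to the boundary estimate, the $\tfrac{\Delta t}{\Delta s}$ and $1-\tfrac{\Delta t}{\Delta s}$ contributions of $|\Delta^+ n_0^m|$ recombine, and substituting the bounds on $|N^{m+1}-N^m|$ and $|X^{m+1}-X^m|$ I would arrive at
\[
TV(n^{m+1})\le(1+C_1\Delta t)\,TV(n^m)+C_2\Delta t,
\]
with $C_1=\|p\|_\infty$ and $C_2=\|\partial_X p\|_\infty\|n^0\|_1\,C_X+\|p\|_\infty^2\|n^0\|_1+\|\partial_s p\|_\infty\|n^0\|_1+\|p\|_\infty\|n^0\|_\infty$, both depending only on $p$, $\alpha$ and the norms of $n^0$. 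Iterating this recursion over $m$ and using $(1+C_1\Delta t)^m\le e^{C_1 m\Delta t}=e^{C_1T}$ gives $TV(n^m)\le e^{C_1T}TV(n^0)+\tfrac{C_2}{C_1}\bigl(e^{C_1T}-1\bigr)$, which is \eqref{BV_estimate-d}.
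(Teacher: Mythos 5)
Your proposal is correct and follows the same overall skeleton as the paper's proof: interior TV estimate identical to the ITM case, boundary term reduced to $|N^{m+1}-N^m|$, that increment split into an $X$-variation part and an $n$-variation part, and a discrete Gronwall iteration to close. The one step where you genuinely diverge is the bound on $|X^{m+1}-X^m|$. The paper works from the implicit fixed-point form \eqref{eq:findX}, so the difference $X^{m+1}-X^m$ reappears on the right-hand side multiplied by $\tfrac{\Delta t}{2}\alpha_0\|\partial_X p\|_\infty\|n^0\|_1$, and the second restriction in \eqref{CFL2} is invoked to absorb this diagonal term; the resulting bound \eqref{X1-X2-d} also carries a $\Delta t^2\,TV(n^m)$ contribution that must be tracked into the recursion. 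You instead work from the explicit convolution \eqref{eq:Xm}, reindex, and perform an Abel summation to shift the increments onto the kernel, so that the $\alpha_0$ term appears only as $N^{m+1}\alpha_0$, which is already controlled by the $L^\infty$-lemma; this yields a clean $|X^{m+1}-X^m|\le C_X\Delta t$ with no absorption needed and no $TV(n^m)$ dependence at that stage. Both arguments ultimately rest on the same quantities ($\|\alpha\|_\infty$ and the discrete total variation of the sampled kernel, finite since $\alpha$ is smooth with integrable derivative), so neither is more general in terms of hypotheses on $\alpha$; your route is slightly more elementary in that the contraction condition of \eqref{CFL2} is needed only to define $X^{m+1}$, not to estimate its increment, while the paper's route avoids the summation by parts. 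Your final recursion and the passage from $(1+C_1\Delta t)^m$ to $e^{C_1T}$ coincide with the paper's conclusion.
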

\begin{proof}
using notation $\Delta^{+}n_j^m=n_{j+1}^{m}-n_{j}^{m}$, we have
\begin{equation*}
    \begin{split}
      \Delta^{+}n_j^{m+1}&=  \Delta^{+}n_j^{m}-\frac{\Delta t}{\Delta s}\left(\Delta^{+}n_j^{m}-\Delta^{+}n_{j-1}^{m}\right)-\Delta t p(s_{j+1},X^m)\Delta^{+}n_j^{m}
       -\Delta t n_j^m(p(s_{j+1},X^m)-p(s_{j},X^m)).
    \end{split}
\end{equation*}
Next, from the CFL condition \eqref{CFL} we obtain
\begin{equation*}
     |\Delta^{+}n_j^{m+1}| \le \left(1-\Delta t\left( \frac{1}{\Delta s}+p(s_{j+1},X^m) \right) \right)|\Delta^{+}n_j^{m}|
     +\frac{\Delta t}{\Delta s}|\Delta^{+}n_{j-1}^{m}|+\Delta t\Delta s\|\partial_s p\|_\infty n_j^m.
\end{equation*}
By summing over all $j\ge 1$ we deduce that
\begin{equation}
\label{Delta+nm}
\sum_{j=1}^\infty|\Delta^{+}n_j^{m+1}|\le
\sum_{j=1}^\infty|\Delta^{+}n_j^{m}|
+ \frac{\Delta t}{\Delta s} |\Delta^{+}n_{0}^{m}|+\Delta t\|\partial_s p\|_\infty\|n^0\|_1. 
\end{equation}
We now take into account the boundary term. From the numerical scheme we have
\begin{equation*}
\label{Delta+n0-d}
\begin{split}
     |\Delta^{+}n_0^{m+1}| & = |n_1^{m+1}-N^{m+1}|\\
     & \le \left(1-\frac{\Delta t}{\Delta s}\right)|n_1^m-N^m|+  |N^{m+1}- N^m|+\Delta t p(s_1,N^m) n^m_1\\
     &\le \left(1-\frac{\Delta t}{\Delta s}\right)|\Delta^+ n_0^m|+  |N^{m+1}-N^m|+\Delta t \|p\|_\infty \|n^0\|_\infty.
\end{split} 
\end{equation*}

Next we estimate the second term of the last inequality. First note that
\begin{equation*}
    \begin{split}
      N^{m+1}- N^{m}& =\Delta s \sum_{j\in\N}p(s_j,X^{m+1})n_j^{m+1}-\Delta s \sum_{j\in\N} p(s_j,X^{m})n_j^{m}\\
    &=\Delta s \sum_{j\in\N}(p(s_j,X^{m+1})-p(s_j,X^{m}))n_j^{m+1}+\Delta s \sum_{j\in\N}p(s_j,X^m)(n_j^{m+1}-n_j^m),
    \end{split}
\end{equation*}
thus we have
\begin{equation}
\label{N1-N2-d}
    \begin{split}
      |N^{m+1}- N^{m}|&\le \|\partial_X p\|_\infty \|n^0\|_1 |X^{m+1}-X^{m}|+\|p\|_\infty\sum_{j\in\N}\Delta s|n^{m+1}_j-n^m_j|\\
      &\le  \|\partial_X p\|_\infty \|n^0\|_1 |X^{m+1}-X^{m}| + \|p\|_\infty\Delta t\left(  \sum_{j\in\N} | n_j^{m}-n_{j-1}^m|  + \|p\|_\infty \sum_{j\in\N}  \Delta s n_j^m \right)\\
      &\le  \|\partial_X p\|_\infty \|n^0\|_1 |X^{m+1}-X^{m}| + \|p\|_\infty\Delta t  \sum_{j\in\N} |\Delta^+ n^m_{j-1}| + \Delta t\|p\|_\infty^2\|n^0\|_1.
    \end{split}
\end{equation}
Similarly for $|X^{m+1}-X^{m}|$ we get by using Equation \eqref{eq:findX}
\begin{equation*}
    \begin{split}
       |X^{m+1}-X^m|&\le \frac{\Delta t}{2}\alpha_0\|\partial_X p\|_\infty\|n^0\|_1 |X^{m+1}-X^{m}|+\frac{\Delta t}{2}\alpha_0\|p\|_\infty\sum_{j\in\N}\Delta s|n^{m+1}_j-n^m_j|\\
       &\qquad +\frac{\Delta t}{2}\left|\sum_{k=0}^m N^k\alpha_{m+1-k}-\sum_{k=0}^{m-1} N^k\alpha_{m-k}\right|,
    \end{split}
\end{equation*}
so we get
\begin{equation*}
    \begin{split}
       |X^{m+1}-X^m|&\le \frac{\Delta t}{2}\alpha_0\left(\|\partial_X p\|_\infty\|n^0\|_1 |X^{m+1}-X^{m}|+\Delta t\|p\|_\infty\sum_{j\in\N} |\Delta^+ n^m_{j-1}| +\Delta t\|p\|_\infty^2\|n^0\|_1\right)\\
       &\qquad +\frac{\Delta t}{2} N^m\alpha_1+\frac{\Delta t}{2}\|p\|_\infty\|n^0\|_1\sum_{k=0}^{m}|\alpha_{m+1-k}-\alpha_{m-k}|,
    \end{split}
\end{equation*}
and therefore we have the following estimate
\begin{equation}
\begin{split} \label{X1-X2-d}
     |X^{m+1}-X^m| &\le \frac{\Delta t}{2(1-\frac{\Delta t}{2}\alpha_0\|\partial_X p\|_\infty\|n^0\|_1)}\left(\Delta t\alpha_0\|p\|_\infty\sum_{j\in\N} |\Delta^+ n^m_{j-1}| +\Delta t\alpha_0\|p\|_\infty^2\|n^0\|_1 \right. \\
     &\left.\phantom{\sum_{j\in\N}}+
    \|p\|_\infty\|n^0\|_1\|\alpha\|_\infty+\|p\|_\infty\|n^0\|_1 TV(\alpha)\right).
\end{split}
\end{equation}
By plugging \eqref{X1-X2-d} into \eqref{N1-N2-d} we obtain for $\Delta t$ small
\begin{equation*}
|N^{m+1}-N^m|\le A_1\Delta t\sum_{j\in\N}|\Delta^+ n^m_{j-1}|+B_1\Delta t,
\end{equation*}
where $A_1,B_1>0$ are constants depending on $p,\alpha$ and the norms of $n^0$. So that the boundary term is estimated as follows
\begin{equation}
\label{Delta+n0}
\begin{split}
     |\Delta^{+}n_0^{m+1}|
     &\le \left(1-\frac{\Delta t}{\Delta s}\right)|\Delta^+ n_0^m|+  A_1\Delta t\sum_{j\in\N}|\Delta^+ n^m_{j-1}|+B_2\Delta t,
\end{split}
\end{equation}
and by adding \eqref{Delta+n0} with \eqref{Delta+nm}, we obtain
\begin{equation}
\sum_{j=0}^\infty|\Delta^{+}n_j^{m+1}|\le
(1+C_1\Delta t)\sum_{j=0}^\infty|\Delta^{+}n_j^{m}|+C_2\Delta t,
\end{equation}
where $C_1,C_2>0$ constants depending only on $p,\alpha$ and the norms of $n^0$.

Finally, proceeding recursively on $m$, we obtain
\begin{equation*}
 \sum_{j=0}^\infty|\Delta^{+}n_j^m|\le (1+C_1\Delta t)^{m}\sum_{j=0}^\infty|\Delta^{+}n_j^{0}|+\frac{C_2}{C_1}\left((1+C_1\Delta t)^{m}-1\right),
\end{equation*}
and the estimate \eqref{BV_estimate-d} readily follows.
\end{proof}

As we did in Section \ref{ITM} for the \eqref{eq:model} equation. We now prove that the numerical approximation of the solution of Equation \eqref{eq:delay} $n(t,s)$, which is constructed by a simple piece-wise linear interpolation, has a limit when the time step $\Delta t$ and age step $\Delta s$ converge to $0$. 

\begin{lemma}
\label{lim-delta-d}
    Assume that $n^0\in BV(\R^+)$ is compactly supported and the coefficient $p$ satisfies the hypothesis of Theorem \ref{wellp-delay}. Consider the function $n_{\Delta t,\Delta s}(t,s)\in\mathcal{C}\left([0,T],L^1(\R^+)\right)$ defined by
    \begin{equation*}
    n_{\Delta t,\Delta s}(t,s)\coloneqq\frac{t^m-t}{\Delta t}\sum_{j\in\N}n_j^{m-1}\chi_{[s_{j-\frac{1}{2}},s_{j+\frac{1}{2}}]}(s)+\frac{t-t^{m-1}}{\Delta t}\sum_{j\in\N}n_j^{m}\chi_{[s_{j-\frac{1}{2}},s_{j+\frac{1}{2}}]}(s)\;\textrm{if}\;t\in[t^{m-1},t^{m}].
    \end{equation*}
    Then there exists a sub-sequence $(\Delta t_k,\Delta s_k)\to (0,0)$ when $k\to\infty$ and a function $\overline{n}(t,s)$ such that $n_{\Delta t_k,\Delta s_k}\to\overline{n}$ in $\mathcal{C}\left([0,T],L^1(\R^+)\right)$. Moreover, if we define the function $X_{\Delta t,\Delta s}(t)\in\mathcal{C}[0,T]$ as the unique solution of the integral equation
    \begin{equation*}
    X(t)=\int_0^t\int_0^\infty\alpha(t-\tau)p(s,X(\tau))n_{\Delta t,\Delta s}(\tau,s)\,\dd s\,\dd \tau,
    \end{equation*}
   then there exists $\overline{X}\in\mathcal{C}[0,T]$ such that $X_{\Delta t_k,\Delta s_k}\to\overline{X}$ in $\mathcal{C}[0,T]$ and $\overline{X}$ is a solution of the equation
   \begin{equation}
   \label{lim-Xbar}
       \overline{X}(t)=\int_0^t\int_0^\infty\alpha(t-\tau)p(s,\overline{X}(\tau))\overline{n}(\tau,s)\,\dd s\,\dd \tau,
   \end{equation}
   and similarly $N_{\Delta t,\Delta s}\in\mathcal{C}[0,T]$ defined as 
  \begin{equation*}
   N_{\Delta t,\Delta s}(t)=\int_0^\infty p\left(s,X_{\Delta t,\Delta s}(t)\right)n_{\Delta t,\Delta s}\,\dd s,
  \end{equation*}
   converges respectively to $\overline{N}\in\mathcal{C}[0,T]$, where $\overline{N}$ satisfies the equality
   \begin{equation}
    \label{lim-Nbar-d}
       \overline{N}(t)=\int_0^\infty p(s,\overline{X}(t))\overline{n}(t,s)\,\dd s.
   \end{equation}
 \end{lemma}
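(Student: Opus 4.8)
The plan is to mirror closely the proof of Lemma \ref{lim-delta} from the \eqref{eq:model} case, adapting the few points where the distributed delay enters. First I would invoke the mass conservation lemma to get $\|n_{\Delta t,\Delta s}(t,\cdot)\|_1 = \|n^0\|_1$ for all $t\in[0,T]$, which gives the uniform $L^1$ bound. Then I would establish the two moduli of continuity for the family $n_{\Delta t,\Delta s}$: in the $s$-variable the estimate $\int_0^\infty |n_{\Delta t,\Delta s}(t,s+h)-n_{\Delta t,\Delta s}(t,s)|\,\dd s \le |h|\,\|n^0\|_1$ follows as before from writing $n_{\Delta t,\Delta s}$ as a piecewise-constant-in-$s$ interpolation; in the $t$-variable, using the numerical scheme \eqref{eq:nn} together with the BV-estimate of Lemma \ref{BV-lem-d}, one bounds $\int_0^\infty |n_{\Delta t,\Delta s}(t_1,s)-n_{\Delta t,\Delta s}(t_2,s)|\,\dd s$ by $|t_1-t_2|\bigl(C_T\,TV(n^0)+\|p\|_\infty\|n^0\|_1\bigr)$, exactly as in \eqref{mod-cont-t}, since the spatial structure of the scheme is unchanged and $p$ is bounded uniformly in its second argument. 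Combined with the propagation of compact support ($n^m_j=0$ for $j\ge K+M$, hence $n_{\Delta t,\Delta s}$ supported in $s\le R+T$), the compactness criterion in $\mathcal{C}([0,T],L^1(\R^+))$ yields a subsequence $n_{\Delta t_k,\Delta s_k}\to\overline n$.

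Next I would handle $X_{\Delta t,\Delta s}$. By Lemma \ref{fixedpoint-delay}, for $T$ small the map $\psi$ is well-defined so $X_{\Delta t,\Delta s}=\psi(n_{\Delta t,\Delta s})\in\mathcal{C}[0,T]$, and the $L^\infty$ lemma gives $\|X_{\Delta t,\Delta s}\|_\infty \le \|p\|_\infty\|n^0\|_1\|\alpha\|_1$, so the family is uniformly bounded. For equicontinuity I would not use the Lipschitz estimate \eqref{psi-lip-d} directly (it only controls the dependence on $n$), but instead differentiate/estimate the defining integral equation in $t$: splitting
\begin{equation*}
X_{\Delta t,\Delta s}(t_1)-X_{\Delta t,\Delta s}(t_2)=\int_{t_2}^{t_1}\!\!\int_0^\infty \alpha(t_1-\tau)p(s,X_{\Delta t,\Delta s}(\tau))n_{\Delta t,\Delta s}(\tau,s)\,\dd s\,\dd\tau+\int_0^{t_2}\!\!\int_0^\infty\bigl(\alpha(t_1-\tau)-\alpha(t_2-\tau)\bigr)p(\cdots)n_{\Delta t,\Delta s}\,\dd s\,\dd\tau,
\end{equation*}
and bounding the first term by $|t_1-t_2|\,\|\alpha\|_\infty\|p\|_\infty\|n^0\|_1$ and the second by $\|p\|_\infty\|n^0\|_1\int_0^{t_2}|\alpha(t_1-\tau)-\alpha(t_2-\tau)|\,\dd\tau$, which tends to $0$ with $|t_1-t_2|$ uniformly by continuity of translation in $L^1$ (using that $\alpha$ is bounded, or smooth as assumed). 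Arzelà–Ascoli then extracts a further subsequence with $X_{\Delta t_k,\Delta s_k}\to\overline X$ in $\mathcal{C}[0,T]$.

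It remains to pass to the limit in the defining equations. For $\overline X$: since $X_{\Delta t_k,\Delta s_k}\to\overline X$ uniformly and $n_{\Delta t_k,\Delta s_k}\to\overline n$ in $\mathcal{C}([0,T],L^1)$, and $p\in W^{1,\infty}$ so $p(s,\cdot)$ is Lipschitz uniformly in $s$, the integrand $\alpha(t-\tau)p(s,X_{\Delta t_k,\Delta s_k}(\tau))n_{\Delta t_k,\Delta s_k}(\tau,s)$ converges in $L^1(\dd s\,\dd\tau)$ on $[0,t]\times\R^+$ — write $p(s,X_k)n_k - p(s,\overline X)\overline n = (p(s,X_k)-p(s,\overline X))n_k + p(s,\overline X)(n_k-\overline n)$ and bound each piece — so \eqref{lim-Xbar} follows by taking $k\to\infty$ on both sides. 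For $\overline N$: the same decomposition applied to $N_{\Delta t,\Delta s}(t)=\int_0^\infty p(s,X_{\Delta t,\Delta s}(t))n_{\Delta t,\Delta s}(t,s)\,\dd s$ shows $N_{\Delta t_k,\Delta s_k}\to\int_0^\infty p(s,\overline X(t))\overline n(t,s)\,\dd s$ pointwise in $t$, and in fact uniformly, giving \eqref{lim-Nbar-d}; the limit $\overline N$ is continuous as a uniform limit of continuous functions. I expect the main obstacle to be the equicontinuity of $X_{\Delta t,\Delta s}$: unlike the ITM case where $N_{\Delta t,\Delta s}=\psi(n_{\Delta t,\Delta s})$ with $\psi$ Lipschitz into $\mathcal{C}[0,T]$ immediately transfers the time-modulus of $n_{\Delta t,\Delta s}$, here $X$ depends on the \emph{history} of $n$ through the convolution, so the time-regularity of $X$ must be extracted separately using the regularity of $\alpha$ (continuity of translations in $L^1$), and one must be careful that the small-$T$ restriction in Lemma \ref{fixedpoint-delay} is harmless since, as in the proof of Theorem \ref{wellp-delay}, the construction can be iterated on successive intervals of fixed length.
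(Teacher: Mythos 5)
Your proposal is correct and follows the same overall architecture as the paper's proof: compactness of $n_{\Delta t,\Delta s}$ inherited verbatim from Lemma \ref{lim-delta} via the BV-estimate of Lemma \ref{BV-lem-d}, a uniform bound plus equicontinuity for $X_{\Delta t,\Delta s}$ followed by Arzel\`a--Ascoli, and then passage to the limit in the fixed-point equations. The one step where you genuinely diverge is the equicontinuity of $X_{\Delta t,\Delta s}$: the paper differentiates the convolution in $t$, obtaining
\begin{equation*}
\frac{d}{dt}X_{\Delta t,\Delta s}(t)=\alpha(0)\int_0^\infty p(s,X_{\Delta t,\Delta s}(t))n_{\Delta t,\Delta s}(t,s)\,\dd s+\int_0^t\int_0^\infty\alpha'(t-\tau)p(s,X_{\Delta t,\Delta s}(\tau))n_{\Delta t,\Delta s}(\tau,s)\,\dd s\,\dd \tau,
\end{equation*}
and bounds $\|\tfrac{d}{dt}X_{\Delta t,\Delta s}\|_\infty$ by $\alpha(0)\|p\|_\infty\|n^0\|_1+T\|\alpha'\|_\infty\|p\|_\infty\|n^0\|_1$, which yields a uniform Lipschitz bound but uses the smoothness of $\alpha$ (assumed "for simplicity" in the paper). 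Your splitting of $X(t_1)-X(t_2)$ into the tail integral over $[t_2,t_1]$ and the translation difference $\alpha(t_1-\cdot)-\alpha(t_2-\cdot)$, controlled by continuity of translations in $L^1$, gives equicontinuity (though not a Lipschitz modulus) under only $\alpha\in L^1\cap L^\infty$, which is consistent with the paper's remark that the theoretical results should hold for merely integrable kernels; in that sense your argument is slightly more general, at the cost of a weaker modulus of continuity. Your explicit decomposition $p(s,X_k)n_k-p(s,\overline X)\overline n=(p(s,X_k)-p(s,\overline X))n_k+p(s,\overline X)(n_k-\overline n)$ for the limit passage, and your remark that the small-$T$ restriction of Lemma \ref{fixedpoint-delay} is resolved by iterating on successive intervals, fill in details the paper leaves implicit; both are correct.
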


\begin{proof}
    The proof on the compactness of $n_{\Delta t,\Delta s}$ is the same as Lemma \ref{lim-delta} by using Lemma \ref{BV-lem-d}. Hence there exists a sub-sequence $(\Delta t_k,\Delta s_k)\to (0,0)$ when $k\to\infty$ and a function $\overline{n}(t,s)$ such that $n_{\Delta t_k,\Delta s_k}\to\overline{n}$ in $\mathcal{C}\left([0,T],L^1(\R^+)\right)$.  For the sequence $X_{\Delta t,\Delta s}$ observe that
    \begin{equation*}
    \|X_{\Delta t,\Delta s}\|_\infty\le \|\alpha\|_1\|p\|_\infty\|n^0\|_1,
    \end{equation*}
    and for the derivative we get
    \begin{equation*}
        \begin{split}
           \frac{d}{dt}X_{\Delta t,\Delta s}(t)&=\int_0^\infty\alpha(0)p(s,X_{\Delta t,\Delta s}(t))n_{\Delta t,\Delta s}(\tau,s)\,\dd s\,\dd \tau+\int_0^t\int_0^\infty\alpha'(t-\tau)p(s,X_{\Delta t,\Delta s}(\tau))n_{\Delta t,\Delta s}(\tau,s)\,\dd s\,\dd \tau,
        \end{split}
    \end{equation*}
    thus we have the following estimate
 \begin{equation*}
    \left\|\frac{d}{dt}X_{\Delta t,\Delta s}\right\|_\infty\le \alpha(0)\|p\|_\infty\|n^0\|_1+T\|\alpha'\|_\infty\|p\|_\infty\|n^0\|_1,    
  \end{equation*}
    and from Arzelà-Ascoli theorem we conclude that $X_{\Delta t,\Delta s}$ converges to some $\overline{X}$ in $\mathcal{C}[0,T]$ for a sub-sequence. By passing to the limit, Equations \eqref{lim-Xbar} and \eqref{lim-Nbar-d} readily follow.
\end{proof}

We thus obtain the convergence result for the numerical scheme corresponding to the \eqref{eq:delay} equation.

\begin{theorem}[Convergence of the numerical scheme]
Assume that $n^0\in BV(\R^+)$ is compactly supported and the coefficient $p$ satisfies the hypothesis of Theorem \ref{wellp-delay}. Then for all $T>0$, the numerical scheme converges to the unique weak solution $n\in\mathcal{C}([0,T],L^1(\R^+))$ of the equation \eqref{eq:delay}.
\end{theorem}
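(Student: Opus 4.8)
The plan is to mirror the proof of Theorem \ref{conv-num} for the ITM scheme, replacing the pointwise fixed-point relation by the convolution fixed-point relation and carrying the delay through the limit. First I would invoke Lemma \ref{lim-delta-d}: it supplies a sub-sequence $(\Delta t_k,\Delta s_k)\to(0,0)$ along which the piecewise-linear interpolant $n_{\Delta t_k,\Delta s_k}$ converges in $\mathcal{C}([0,T],L^1(\R^+))$ to some $\overline n$, while the associated $X_{\Delta t_k,\Delta s_k}$ and $N_{\Delta t_k,\Delta s_k}$ converge in $\mathcal{C}[0,T]$ to $\overline X$ and $\overline N$ satisfying the limiting identities \eqref{lim-Xbar} and \eqref{lim-Nbar-d}. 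The $L^1$- and $L^\infty$-lemmas for Algorithm \ref{Alg2} provide the uniform bounds needed to justify all the sums and interchanges below.

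Next I would take a test function $\varphi\in\mathcal{C}^1_c([0,T)\times[0,\infty))$, multiply the scheme \eqref{eq:nn} by $\Delta t\,\Delta s\,\varphi(t^m,s_j)$, sum over $j\ge 1$ and $0\le m\le M$, and perform summation by parts in both indices together with the discrete boundary identity $n_0^m\leftrightarrow N^m$. This produces a discrete weak formulation identical in structure to \eqref{discrete}, the only difference being that the reaction term carries $p(s_j,X^m)$ rather than $p(s_j,N^m)$. I would then pass to the limit term by term: the transport and initial-data terms converge exactly as in the proof of Theorem \ref{conv-num} using $n_{\Delta t_k,\Delta s_k}\to\overline n$ and the regularity of $\varphi$; the boundary term $\sum_m\Delta t\,\varphi(t^m,0)N^m$ converges to $\int_0^T\varphi(t,0)\overline N(t)\,\dd t$ because $N_{\Delta t_k,\Delta s_k}\to\overline N$ uniformly; and the reaction term converges to $\int_0^T\!\int_0^\infty p(s,\overline X(t))\,\overline n(t,s)\,\varphi\,\dd s\,\dd t$ once we know $X^m\to\overline X$, using the Lipschitz dependence of $p$ on its second variable.

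The step I expect to be the main obstacle is precisely showing that the discretely computed activity $X^m$ from \eqref{eq:findX} --- built from the trapezoidal convolution quadrature $\tfrac{\Delta t}{2}\sum_{k=0}^m N^k\alpha_{m-k}$ plus a self-consistent term --- actually tracks the continuous $\overline X$, and not only the auxiliary $X_{\Delta t,\Delta s}(t^m)$ produced in Lemma \ref{lim-delta-d} through an exact integral. I would close this gap by estimating $\sup_m|X^m-X_{\Delta t,\Delta s}(t^m)|$: the self-consistency contribution $\tfrac{\Delta t}{2}\Delta s\,\alpha_0\sum_j p(s_j,X^m)n_j^m$ is $O(\Delta t)$, and the difference between the quadrature sum and $\int_0^{t^m}\alpha(t^m-\tau)N_{\Delta t,\Delta s}(\tau)\,\dd\tau$ vanishes as $(\Delta t,\Delta s)\to 0$ since $\alpha$ is continuous and bounded and $N_{\Delta t,\Delta s}$ is uniformly bounded and uniformly convergent; absorbing the residual through the same contraction constant that appears in the CFL condition \eqref{CFL2} then yields $X^m\to\overline X$ uniformly along the sub-sequence.

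Finally, collecting the limits shows that $\overline n$ is a weak solution of \eqref{eq:delay} with total activity $\overline X$ and flux $\overline N$, the relations \eqref{lim-Xbar} and \eqref{lim-Nbar-d} being exactly the third and second equations of the system. By the uniqueness part of Theorem \ref{wellp-delay}, $\overline n=n$ (and likewise $\overline X=X$, $\overline N=N$), and since the limit is independent of the chosen sub-sequence, a standard sub-subsequence argument upgrades the convergence to the whole family $n_{\Delta t,\Delta s}\to n$ in $\mathcal{C}([0,T],L^1(\R^+))$ as $(\Delta t,\Delta s)\to(0,0)$.
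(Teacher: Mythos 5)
Your proposal is correct and follows essentially the same route as the paper, whose proof of this theorem is literally a one-line reference to the ITM convergence argument (Theorem \ref{conv-num}) combined with the compactness supplied by Lemma \ref{lim-delta-d}. The one place where you go beyond the paper is in explicitly estimating $\sup_m|X^m-X_{\Delta t,\Delta s}(t^m)|$ to reconcile the trapezoidal convolution quadrature \eqref{eq:findX} used by the scheme with the exact integral equation defining $X_{\Delta t,\Delta s}$ in Lemma \ref{lim-delta-d}; the paper leaves this consistency step implicit, so your treatment is a welcome completion of the same argument rather than a deviation from it.
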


\begin{proof}
    The proof is the same as Theorem \ref{conv-num}.
\end{proof}

\begin{remark}
The previous results are also valid for the case when the coefficient $p$ is of the form 
\begin{equation*}
p(s,X)=\varphi(X)\chi_{\{s>\sigma(X)\}},
\end{equation*}
with $\varphi$ and $\sigma$ Lipschitz bounded functions.
\end{remark}

\section{Numerical Results}
\label{Simu}
In order to illustrate the theoretical results of the previous sections, we present in different scenarios for the dynamics of the equations  \eqref{eq:model} and \eqref{eq:delay}, which are solved by the finite volume method described in Algorithms \eqref{Alg} and \eqref{Alg2} respectively, while the nonlinear problems \eqref{eqfijo-N} and \eqref{eqfijo-X} where solved for $N$ or $X$ using the Newton-Raphson iterative method with a relative error less than $10^{-12}$. 
Additionally, we use Matlab software for all our simulations.
For all numerical tests, we consider the prototypical coefficient $p$ with absolute refractory period $\sigma>0$ given by
\begin{equation*}
p(s,N)=\varphi(N)\chi_{\{s>\sigma\}}(s),  
\end{equation*}
where $\varphi(N)$ and $\sigma$ are specified in each example. 
For the \eqref{eq:delay} equation we will consider for the delay kernel $\alpha(t)$ the following examples
\begin{equation*}
 \alpha_1(t)=\frac{e^{-t/\lambda }}{\lambda} \quad \text{or} \quad \alpha_2(t)=\frac{1}{\sqrt{2\pi}\lambda}e^{-\frac12(\frac{t-d}{\lambda})^2},\quad \text{with } \lambda=10^{-3}.     
\end{equation*}

For this choice of $\lambda$ we essentially consider the approximation $\alpha_1(t)\approx \delta(t)$, where we are interested in comparing both \eqref{eq:model} and \eqref{eq:delay} equations when we are close to this limit case. Similarly for the second kernel we get $\alpha_2(t)\approx \delta(t-d)$ and we study the behavior of \eqref{eq:delay} equation with different values of the parameter $d$.  

\subsection*{Example 1: A strongly inhibitory case}
We start with an inhibitory firing coefficient, i.e. $\varphi'(N)<0$, given by
\begin{equation}\label{exp:inhibitory}
\varphi(N)=e^{-9N},\quad \sigma=\frac12,  \quad n^0(s)=\frac12e^{-(s-1)^+}.
\end{equation}
With these choice of parameters \eqref{eq:model} equation has a unique steady state with $N^*\approx 0.1800$, by solving Equation \eqref{eqfix-N}. For the \eqref{eq:model} equation we display in Figures~\ref{fig:inhib-nodelay}(a-b) the numerical solution for $n(t,s)$ with $(t,s)\in[0,30]\times[0,40]$ and $N(t)$ with $t\in[0,10]$, where we observe that the total activity $N(t)$ converges to the unique steady state $N^*$, while for $n(t,s)$ the initial condition moves to the right (with respect to age $s$) as it decays exponentially and approaches the equilibrium density given by Equation \ref{solest1}. 
Moreover, this example clearly aligns with the theoretical results on convergence to equilibrium in the strongly excitatory case
studied in \cite{torres2021elapsed}.

\begin{figure}[t]
\begin{tabular}{cc}
  (a)& (b)  \\
  \includegraphics[width=0.45\textwidth]{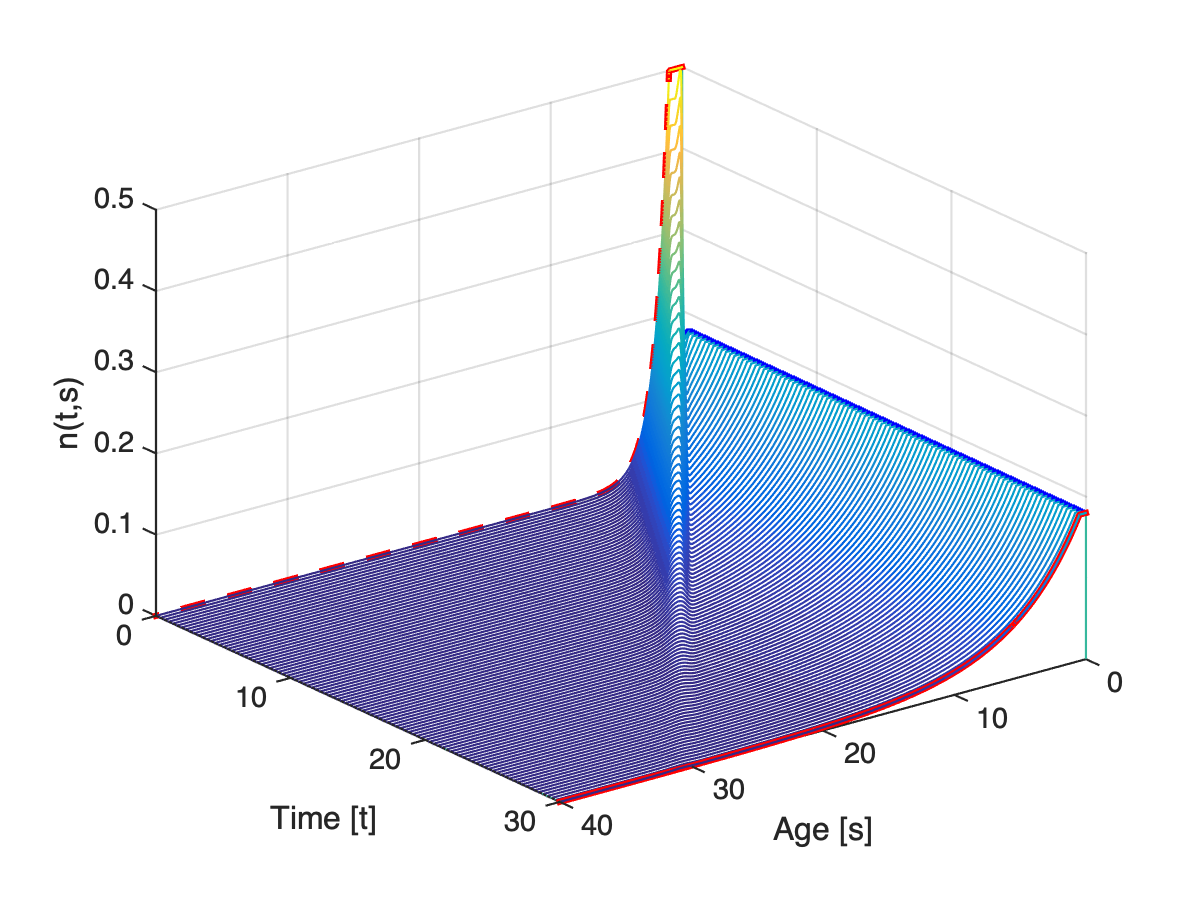}  & \includegraphics[width=0.45\textwidth]{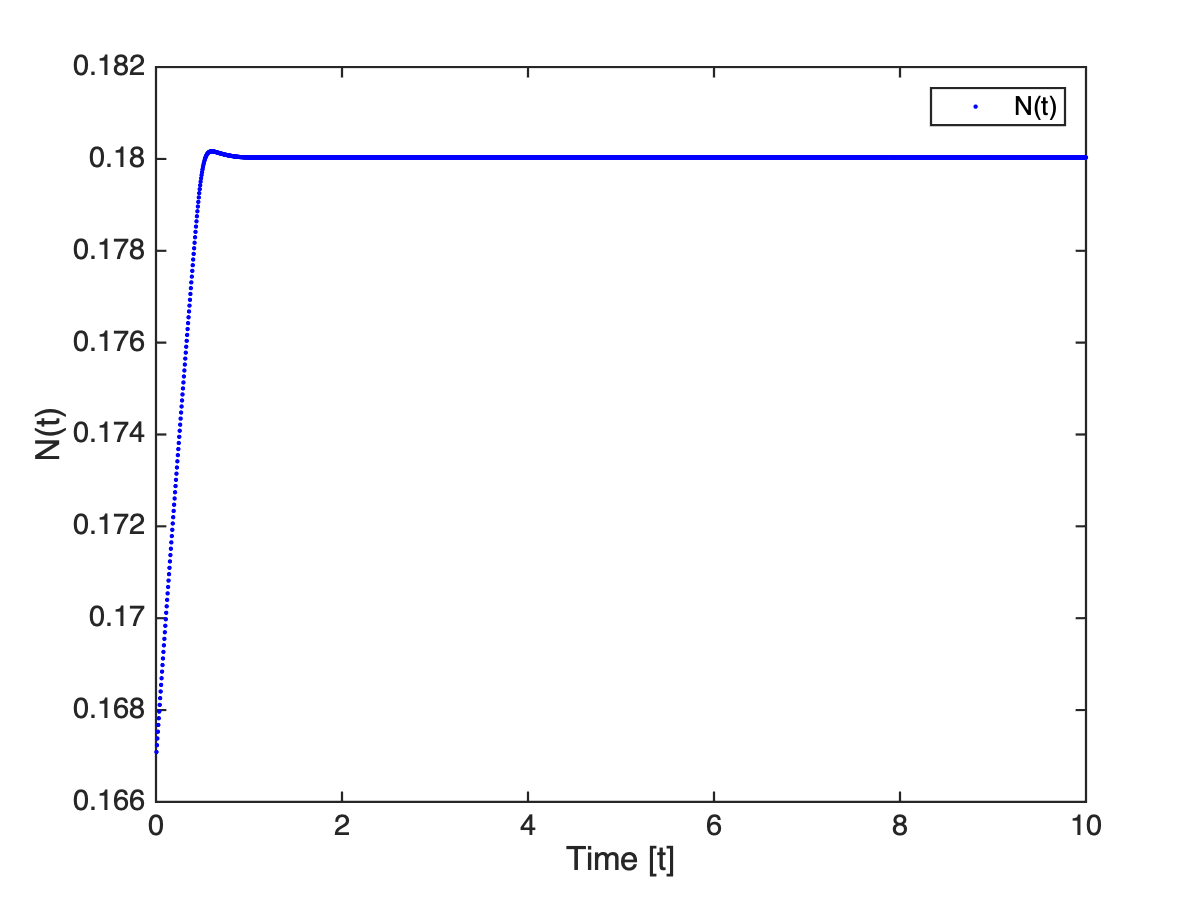} \\
    (c)& (d)  \\
  \includegraphics[width=0.45\textwidth]{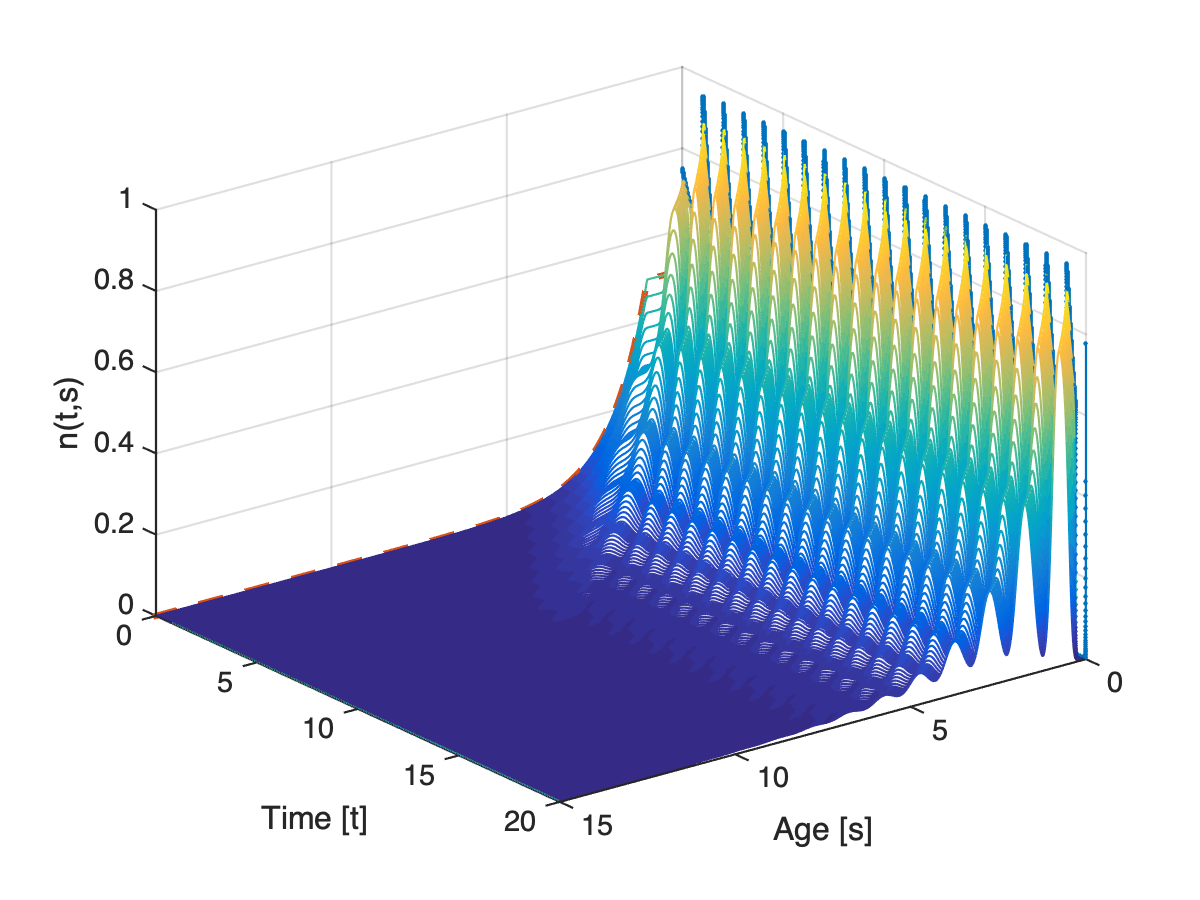}  & \includegraphics[width=0.45\textwidth]{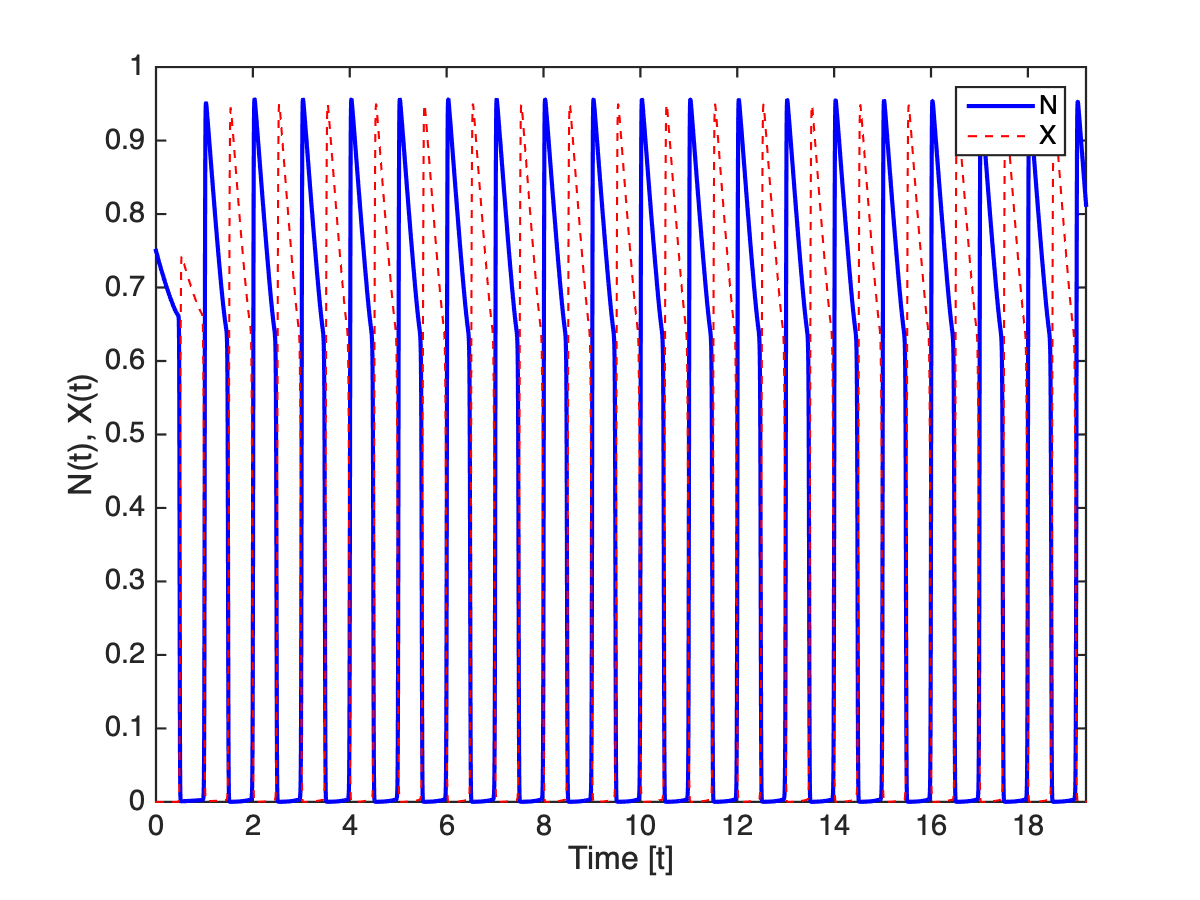} \\
\end{tabular}
    \caption{An inhibitory case. (a-b) Density $n(t,s)$ and discharging flux $N(t)$ for \eqref{eq:model}. (c-d) Density $n(t,s)$, discharging flux $N(t)$ and total activity $X(t)$ for \eqref{eq:delay} with $\alpha_2(t)\approx \delta(t-\frac12)$.}
    \label{fig:inhib-nodelay}
\end{figure}

On the other hand, we solve the \eqref{eq:delay} with the parameters in \eqref{exp:inhibitory} and we choose $d=\frac12$ so that $\alpha_2(t)\approx \delta(t-\frac12)$. In Figs~\ref{fig:inhib-nodelay}(c,d) we display numerical solution for $n(t,s)$ for $(t,s)\in[0,15]\times[0,20]$, and $N(t)$ and $X(t)$ for $t\in[0,20]$. Unlike \eqref{eq:model} equation, the solutions for $N$ and $X$ in \eqref{eq:delay} converge to a periodic profile that we conjecture to be $2d$-periodic and they tend to differ by a period of time equal to $d$, which means that $|N(t-d)-X(t)|\approx 0$ for $t$ large. The density $n(t,s)$ is asymptotic to its respective periodic profile in time. In this case, we observe a periodic solution induced by a negative feedback delay, which is a classical behavior in the context of delay differential equations. This negative feedback corresponds to the inhibition determined by the coefficient $p$, so that when combined with the delay it induces cycles of increase and decrease in the discharging flux $N$ and total activity $X$, which are favorable to form periodic solutions (see \cite{diekmann2012delay} for a reference).

\subsection*{Example 2: An excitatory case with a unique steady state} 
Now we consider an excitatory case, i.e. $\varphi'(N)>0$, given by 
\begin{equation}\label{exp2:datums}
  \varphi(N)=\frac{10N^2}{N^2+1}+0.5,\quad \sigma=1,  \quad n^0(s)=e^{-(s-1)^+}\chi_{\{s>1\}}(s).
\end{equation}
This example was previously studied in \cite{torres2021elapsed} for the \eqref{eq:model} equation. We know that under this choice of parameters the system has a unique steady state with $N^*\approx 0.8186$. 

For the \eqref{eq:model} equation, in Figure \ref{Exp2}(a) we display the numerical solution for $n(t,s)$ for $(t,s)\in[0,20]\times[0,4]$ and in  the blue curve of Figure \ref{Exp2}(b) we show $N(t)$ for $t\in[0,20]$ where the solution is asymptotic periodic pattern with jump discontinuities. This is due to the invertibility condition $\Psi(N,n)$ in \eqref{cond:inverti} is close to zero as we show in Figure \ref{Exp2}(c), where we plot $\Psi(N(t),n(t,\cdot))$ for $t\in[0,10]$. We observe that when a discontinuity arises for $N(t)$ in Figure \ref{Exp2}(b), then $\Psi(N,n)$ is close to zero in Figure \ref{Exp2}(c). This means that the invertibility condition \eqref{cond:inverti} is a key criterion that determine the existence and continuity of solutions.

For the \eqref{eq:delay} equation with $\alpha_1(t)\approx\delta(t)$ we observe in the red curve of Figure \ref{Exp2}(b) that the respective discharging flux is a smooth approximation of the activity of $N(t)$ for the \eqref{eq:model} equation. This is due to the regularizing effect of the delay kernel $\alpha$ through the convolution. 

\begin{figure}[ht!]
 \begin{tabular}{cc}
     (a) & (b)  \\
    \includegraphics[width=0.45\textwidth]{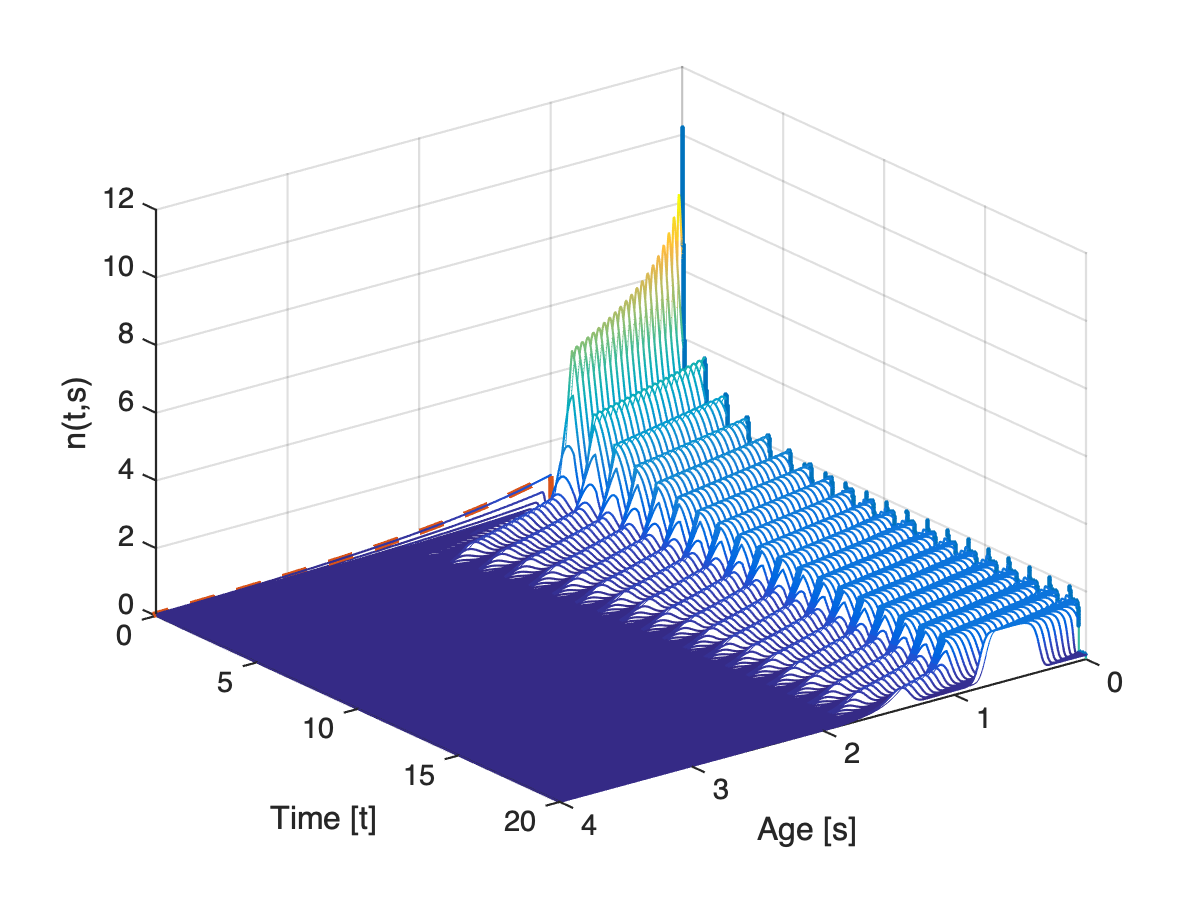}  & 
    \includegraphics[width=0.45\textwidth]{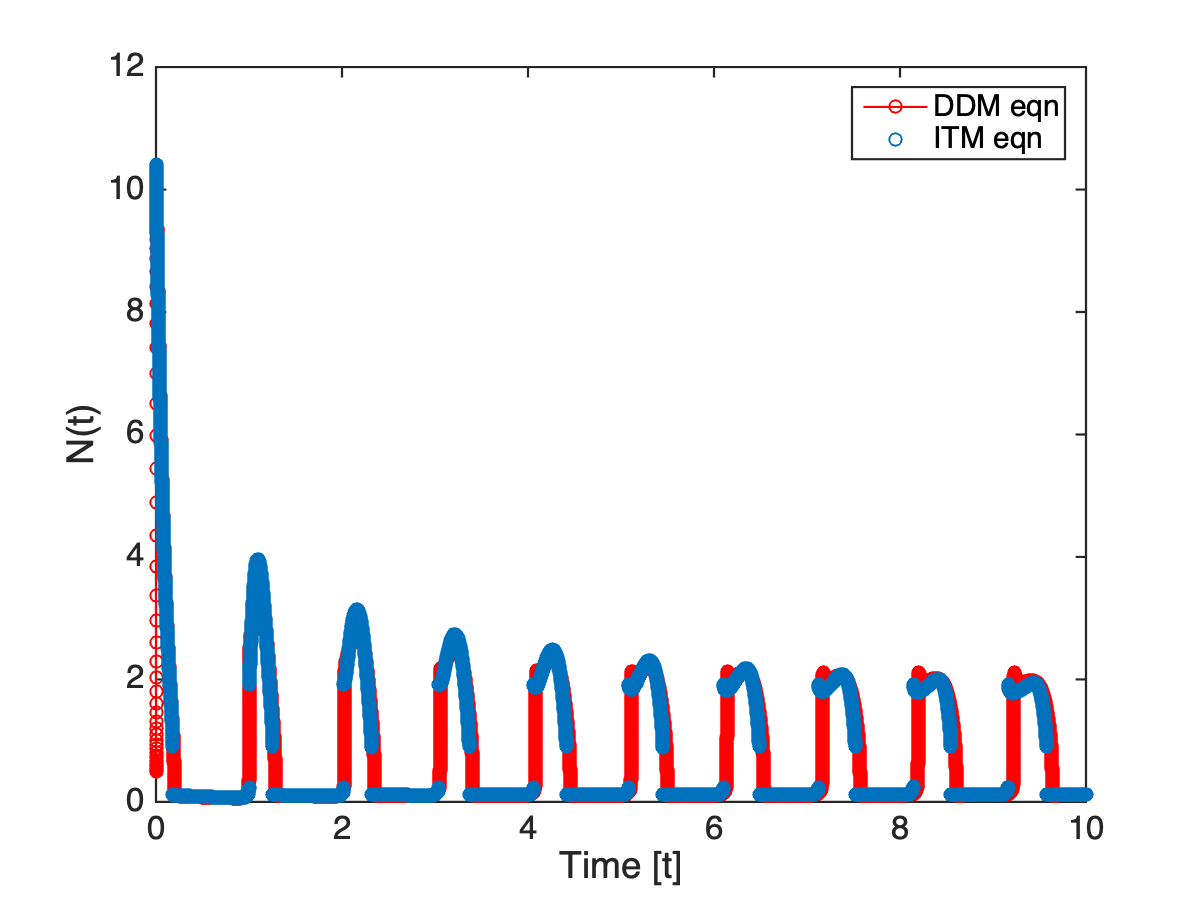}\\  
  \multicolumn{2}{c}{(c)}             \\ 
 \multicolumn{2}{c}{ \includegraphics[width=0.45\textwidth]{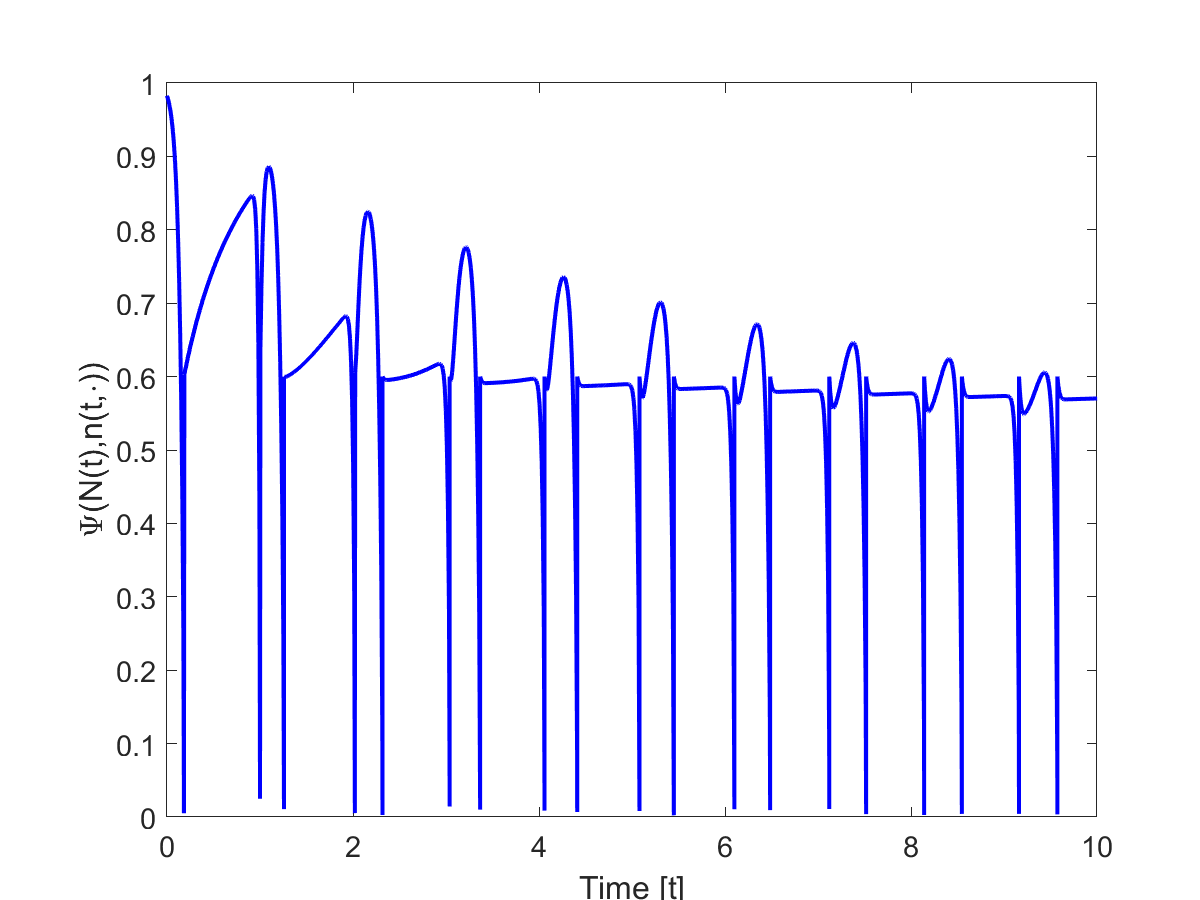}
 }
\end{tabular}
    \caption{An excitatory case with periodic patterns. (a) Density $n(t,s)$ for \eqref{eq:model}, (b) Comparison of $N(t)$ for both equations \eqref{eq:model} and \eqref{eq:delay} with $\alpha_1(t)\approx\delta(t)$, (c) Invertibility condition $\Psi(N,n)$ for \eqref{eq:model}.}
    \label{Exp2}
\end{figure}

Next we take $d=1$ so that $\alpha_2(t) \approx \delta(t-1)$. In Figure \ref{Exp2b}(a) we display $n(t,s)$ for $(t,s)\in[0,6]\times[0,15]$ and in Figure \ref{Exp2b}(b) we display the graphics of $N(t)$ and $X(t)$ for $t\in[0,15]$, where we observe an asymptotic periodic pattern for both discharging flux $N$ and total activity $X$, which we conjecture to be $d$-periodic. In this case we observe a a synchronization phenomena which means that $|N(t)-X(t)|\approx 0$ for large $t$, unlike the inhibitory case shown in Fig \ref{fig:inhib-nodelay}(d) where they tend to differ in time by $d$. In this excitatory case we conjecture that periodic solutions are due to effect of the refractory period $\sigma$ as it was studied in \cite{torres2021elapsed} and the solutions are continuous due to the regularizing effect of the kernel $\alpha$. Therefore we observe that periodic solutions that may arise in the inhibitory and excitatory are of different nature. Indeed, similar phenomena has been previously observed in related models such as the Fokker-Planck equation \cite{caceres2018analysis,caceres2019global,cormier2021hopf,ikeda2022theoretical}. In the excitatory case, periodic solutions have been observed under a refractory regime; while in absence of this period, there are no periodic solutions in the strongly excitatory, regardless the delay. On the other hand for the inhibitory case, periodic solutions have been observed with large delay but regardless the refractory period, suggesting the different nature of periodic solutions.

\begin{figure}[ht!]
   \begin{tabular}{cc}
       (a)& (b) \\
\includegraphics[width=0.45\textwidth]{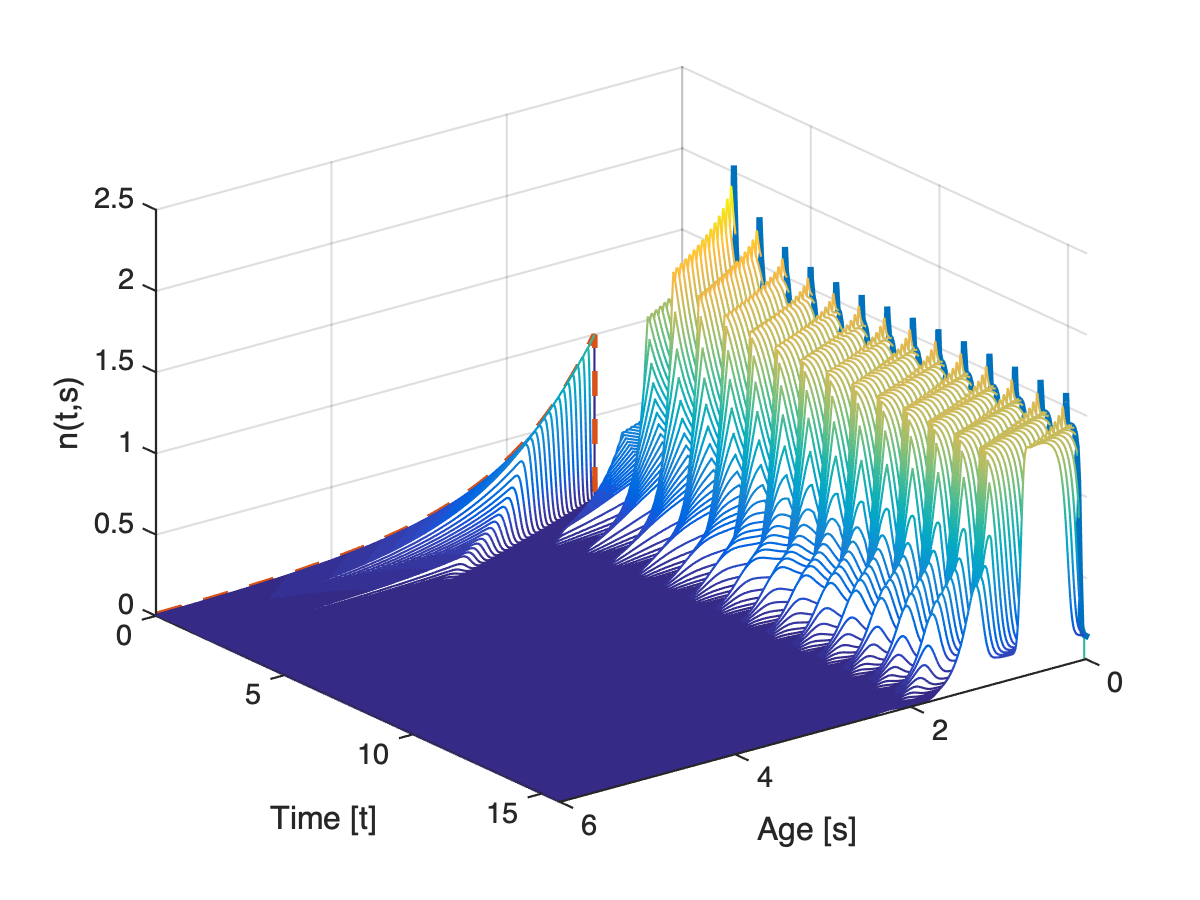} &
\includegraphics[width=0.45\textwidth]{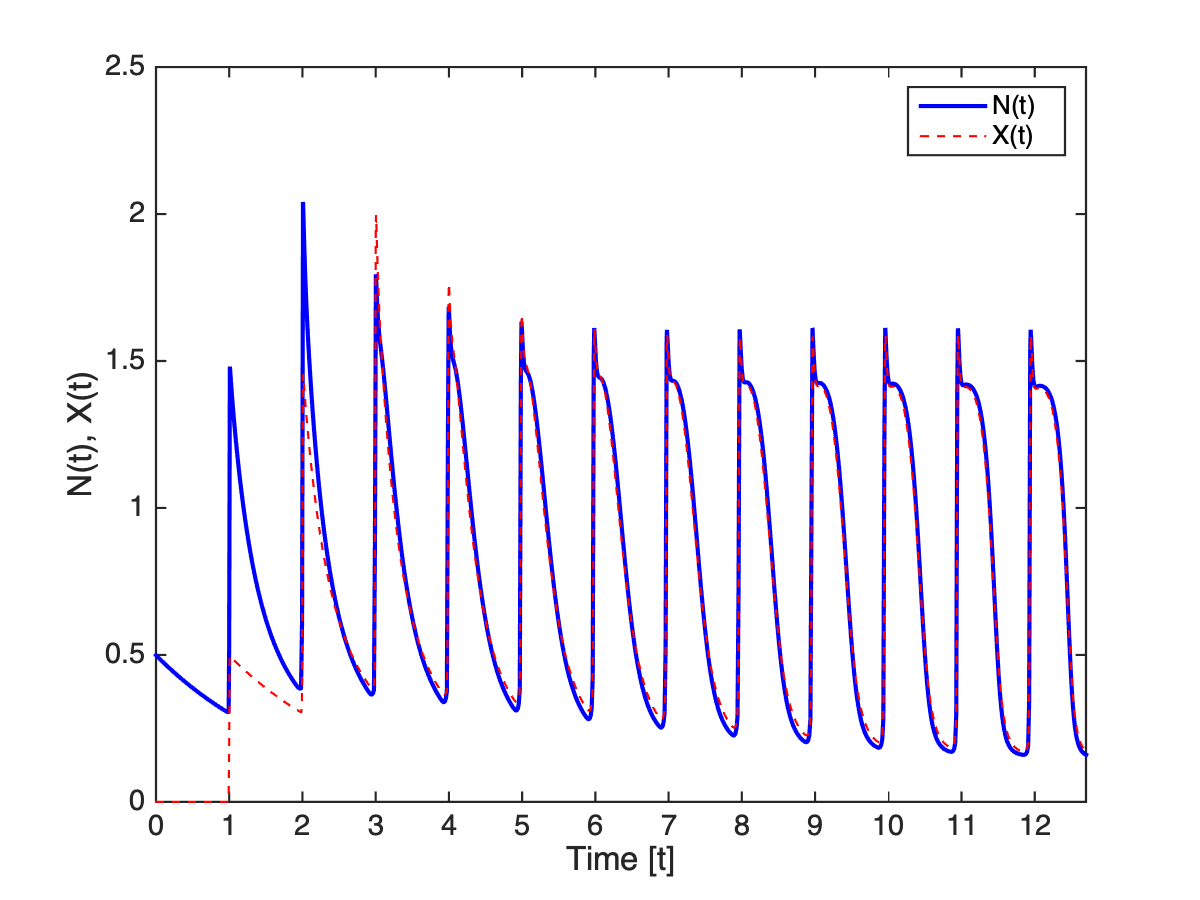}\\
\end{tabular}
    \caption{A periodic solution for the \eqref{eq:delay} equation. (a-b) Density $n(t,s)$, discharging flux $N(t)$ and total activity $X(t)$ with $d=1$ and $\alpha_2(t)\approx \delta(t-1),\,X(t)\approx N(t-1)$.}
    \label{Exp2b}
\end{figure}

\subsection*{Example 3: An excitatory case with multiples steady states} 
Next, we consider a case where $\varphi'(N)>0$ with parameters given by
\begin{equation}
   \varphi(N)=\frac{1}{1+e^{-9N+3.5}},\quad \sigma=\frac12, \quad n^0(s)=e^{-(s-\frac12)^+}\chi_{\{s>\frac12\}}(s).
\end{equation}
This example was previously studied in \cite{torres2021elapsed} for the \eqref{eq:model} equation. Under this choice of parameters, we have three different solutions for $N(0)$ according to Equation \eqref{eqN0}, that are given by $N^0_1\approx 0.0410$, $N^0_2\approx 0.3650$ and $N^0_3\approx 0.6118$. These values determine three different branches of local continuous solutions. For the \eqref{eq:model} equation we display the numerical solution for $n(t,s)$ and $N(t)$ in Figures ~\ref{fig:exp_excita_r3}. The dynamics for the discharging flux $N$ is determined by the initial condition $N(0)$ and thus it also determines the dynamics for $n$. In this case, we observe three different types of numerical approach for $N(t)$, which converge to two different equilibrium points $N^*_1$ and $N^*_2$.  \\
\begin{figure}
 \begin{tabular}{cc}
     (a) & (b) \\
\includegraphics[width=0.4\textwidth]{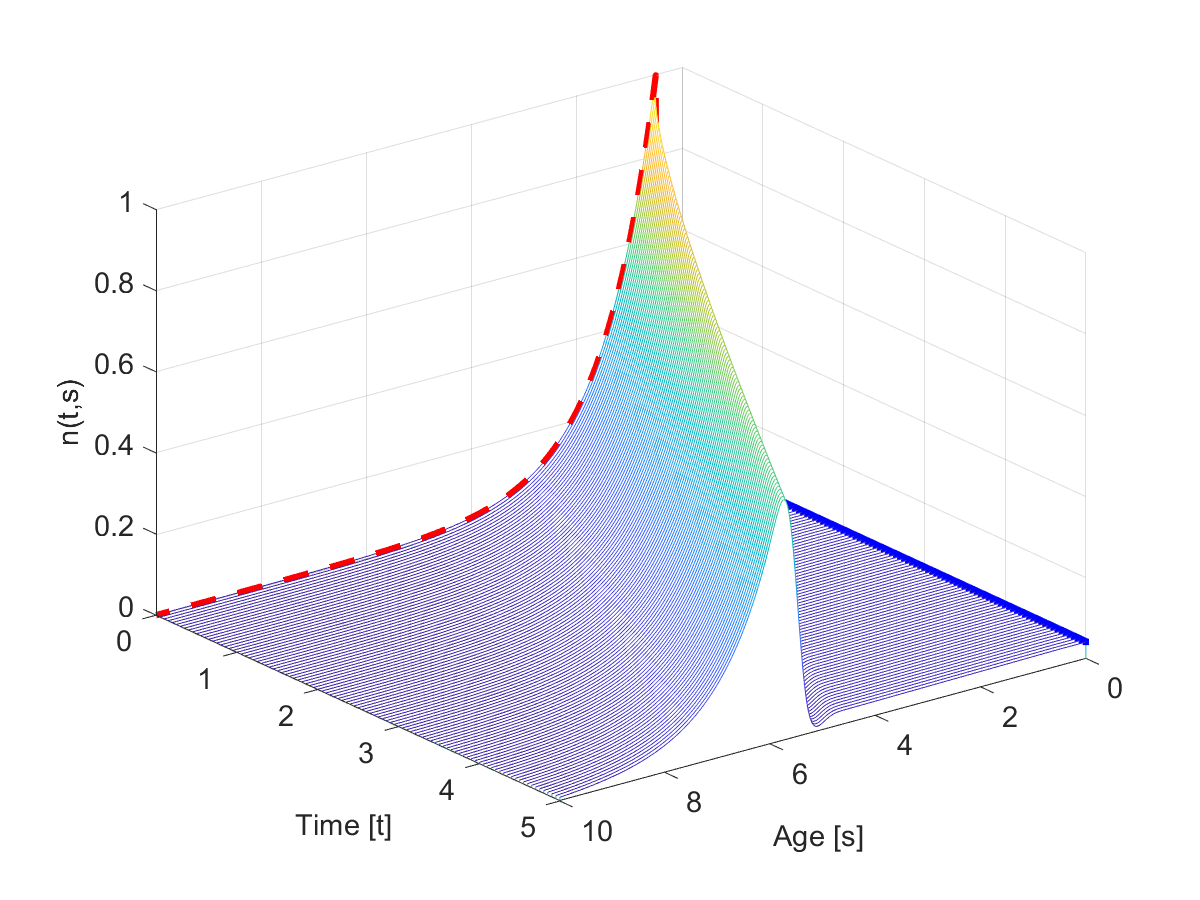}  &  \includegraphics[width=0.4\textwidth]{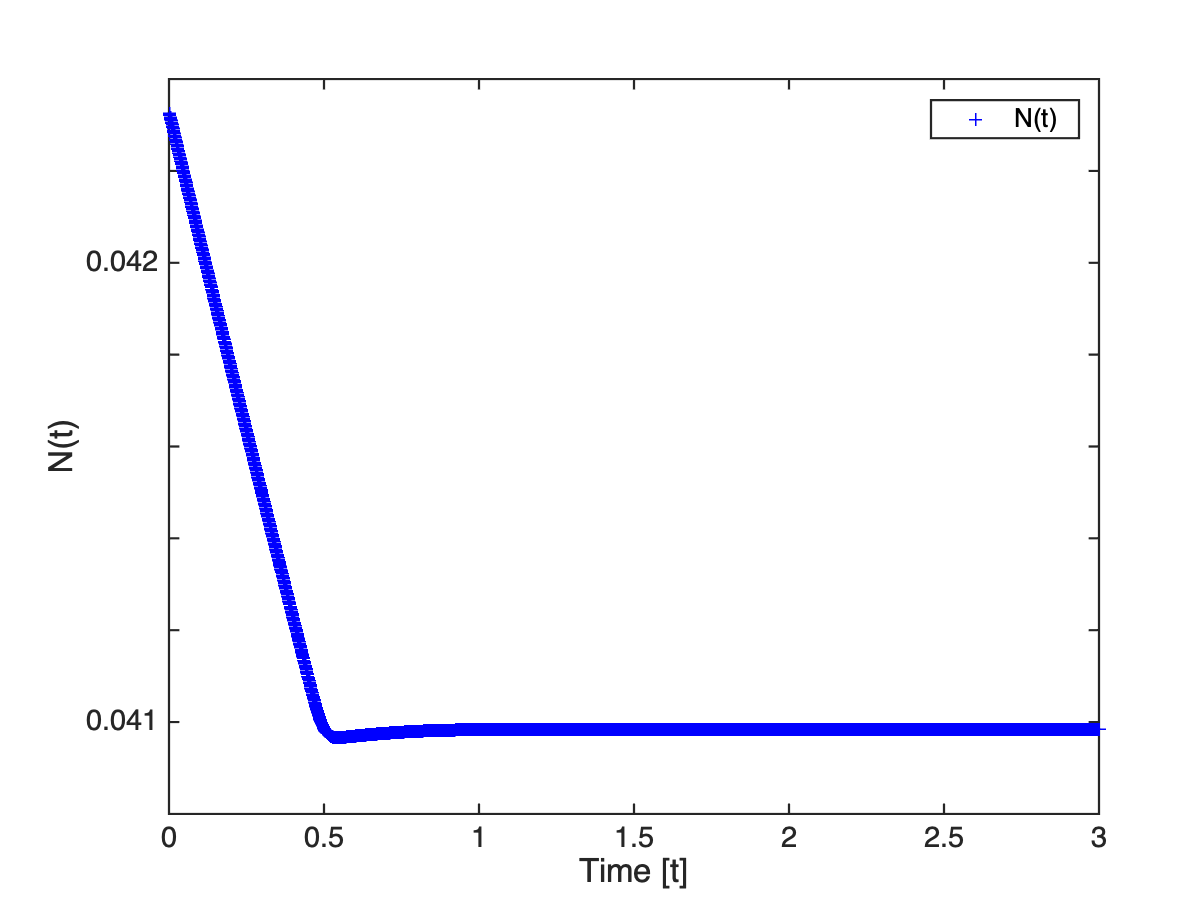} \\
(c) & (d) \\
\includegraphics[width=0.4\textwidth]{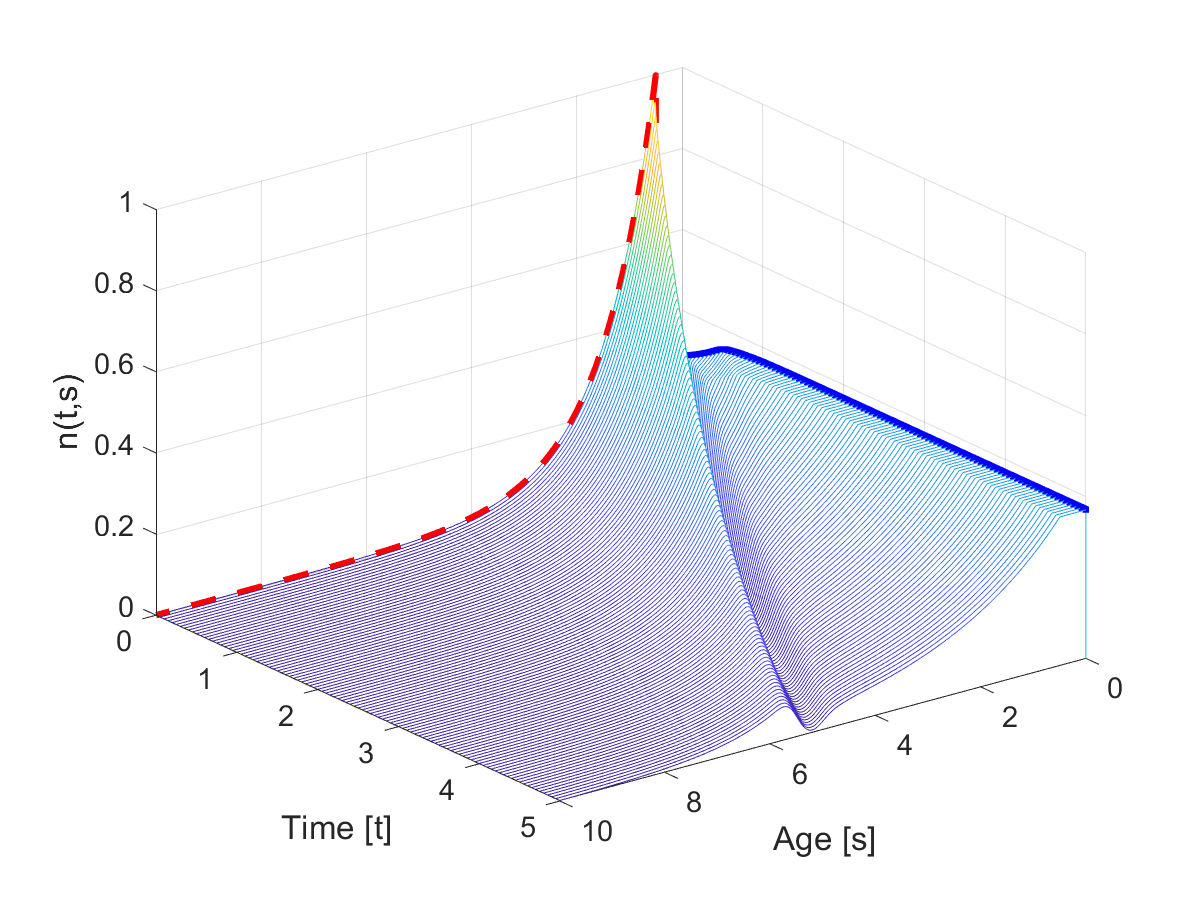} & \includegraphics[width=0.4\textwidth]{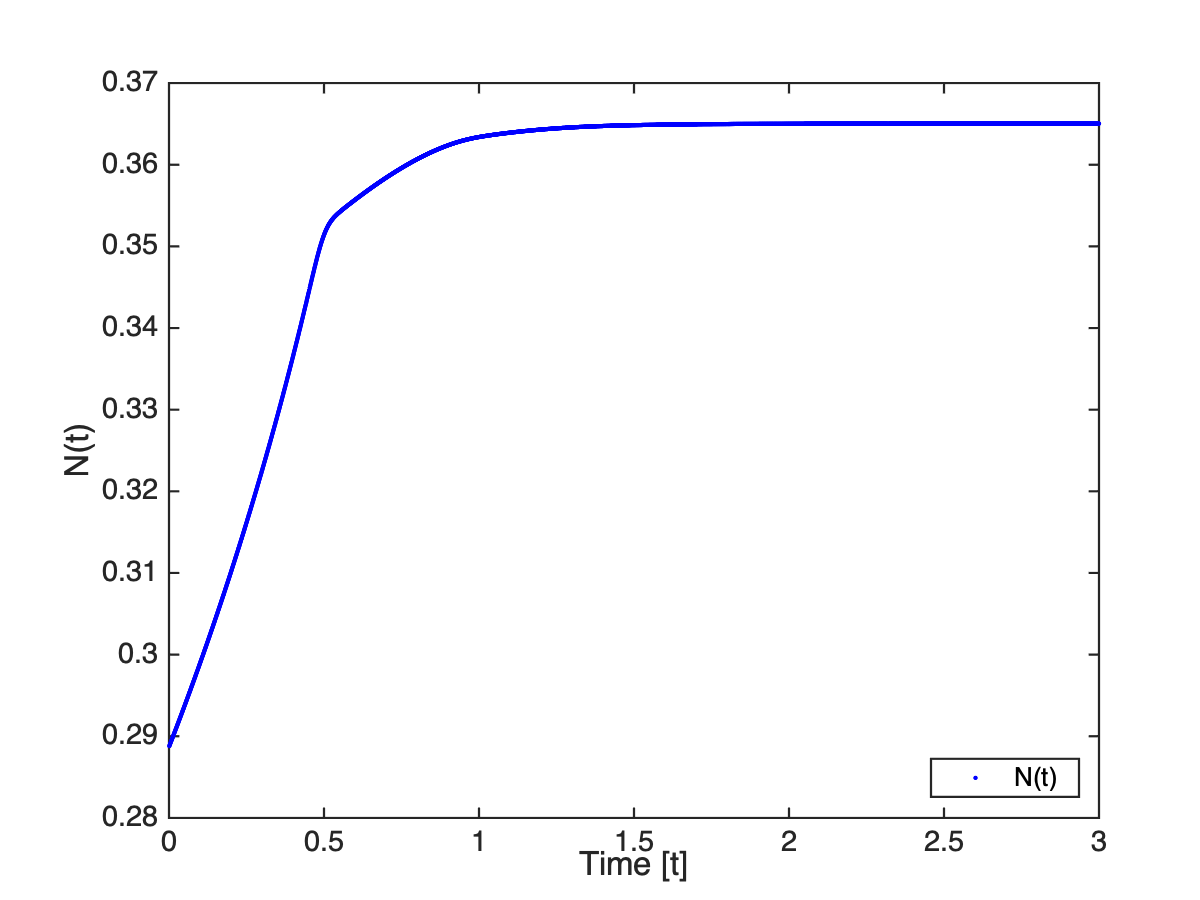} \\
  (e) & (f) \\     
\includegraphics[width=0.4\textwidth]{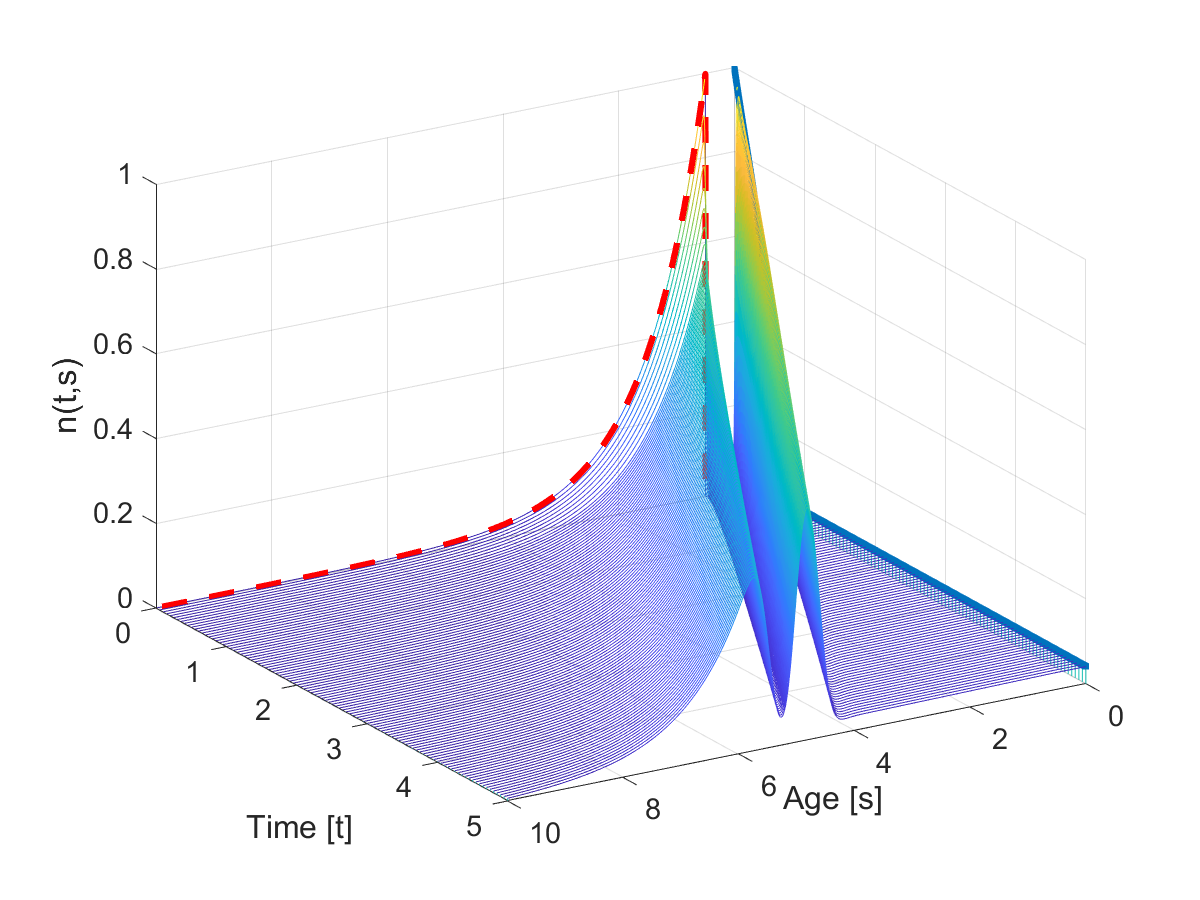} & \includegraphics[width=0.4\textwidth]{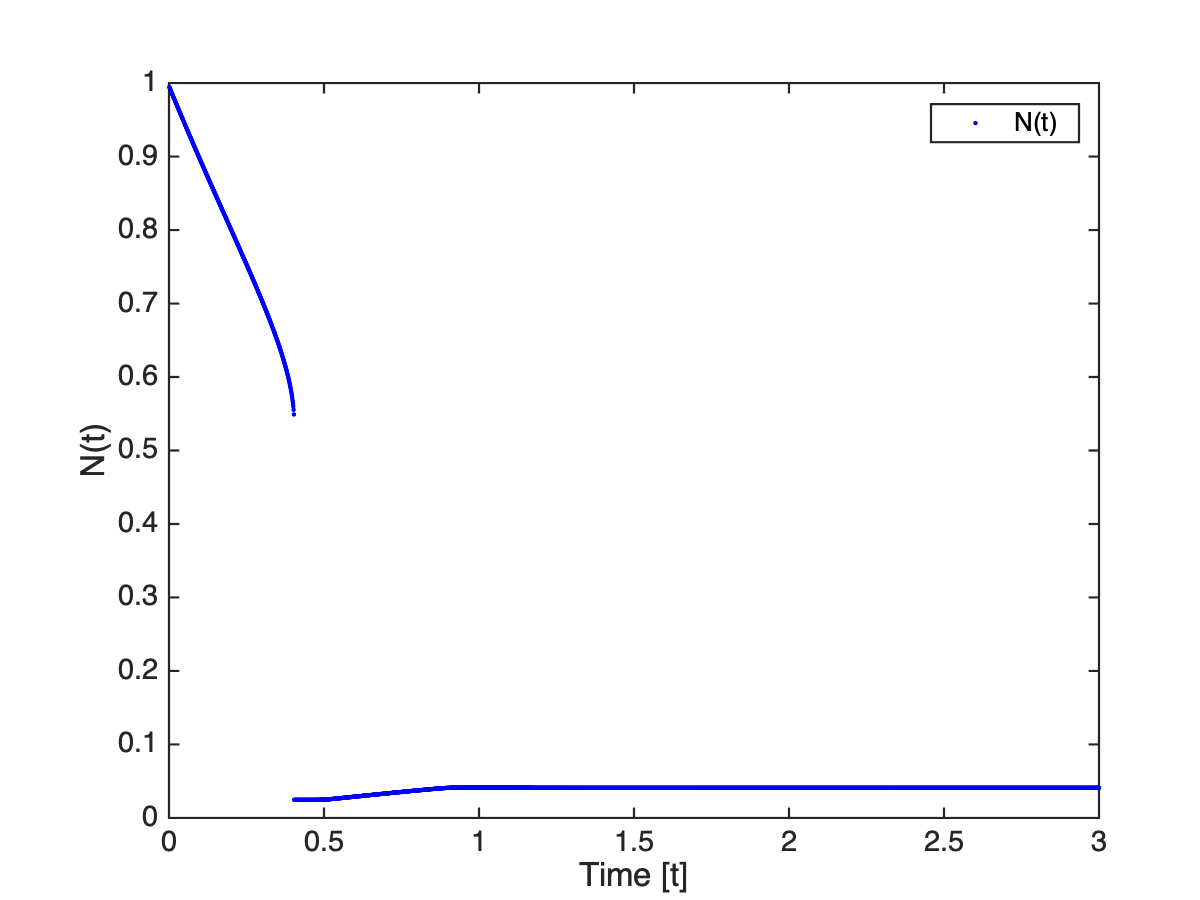}     
 \end{tabular}
    \caption{An excitatory problems with multiples solutions for the  \eqref{eq:model} equation with different initial approximations for $N^0$ in \eqref{eqfijo-N}. (a-b) Density $n(t,s)$ and discharging flux $N(t)$ for $N^0_1\approx 0.0281$, (c-d) for $N^0_2\approx 0.4089$  and (e-f) for $N^0_3\approx 0.7114$}
    \label{fig:exp_excita_r3}
\end{figure}
\\
In the \eqref{eq:delay} equation for both $\alpha_1(t)\approx\delta(t)$ and  $\alpha_2(t)\approx\delta(t-d)$, we observe in Figure \ref{fig:exp_excita_r3_delay} that $N(t)$ converge to the first equilibrium of the \eqref{eq:model} equation. Moreover, when the instantaneous transmission model has multiple branches of solutions for the same initial condition, i.e. multiples solutions for $N(0)$, we conjecture that when $\alpha(t)$ converges $\delta(t)$ in the sense of distributions, the total activity of $X(t)$ in the \eqref{eq:delay} equation converges a.e. to the solution of $N(t)$ in the \eqref{eq:model} equation, whose value of $N(0)$ is the closest one to zero and we expect $L^1$-convergence for the corresponding probability densities. This is due to the condition $X(0)=0$ imposed for the \eqref{eq:delay} equation. We observe in Figure \ref{fig:exp_excita_r3_delay}(b) that $X(t)$ and $N(t)$ follow the same behavior of the Figure \ref{fig:exp_excita_r3}(b) and in particular the total activity $X(t)$ grows fast from $X(0)=0$ until it approaches to the solution of $N(t)$.

Similarly when $\alpha(t)$ converges to $\delta(t-d)$, we conjecture that total activity $X(t)$ in the \eqref{eq:delay} equation converges to the solution of $N(t)$ of Equation \ref{eq:delaypuro} that satisfies $N(t)\equiv 0$ for $t\in [-d,0]$, as it is suggested by the numerical solution in Figure \ref{fig:exp_excita_r3_delay}(d) since we would formally get $X(t)=N(t-d)$ and $X(t)=0$ for $t\in[0,d]$.

\begin{figure}[t]
   \begin{tabular}{cc}
  (a) & (b) \\
    \includegraphics[width=0.45\textwidth]{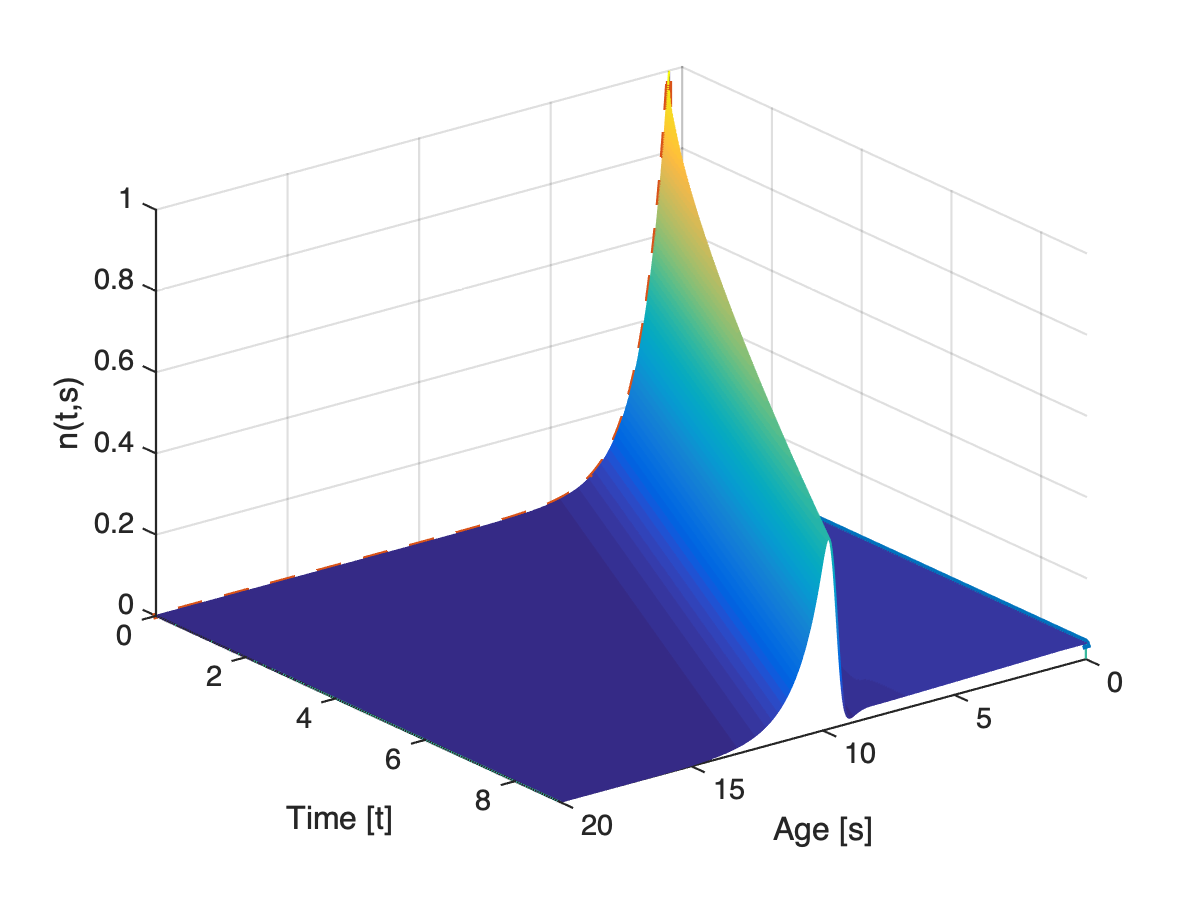}   & 
    \includegraphics[width=0.45\textwidth]{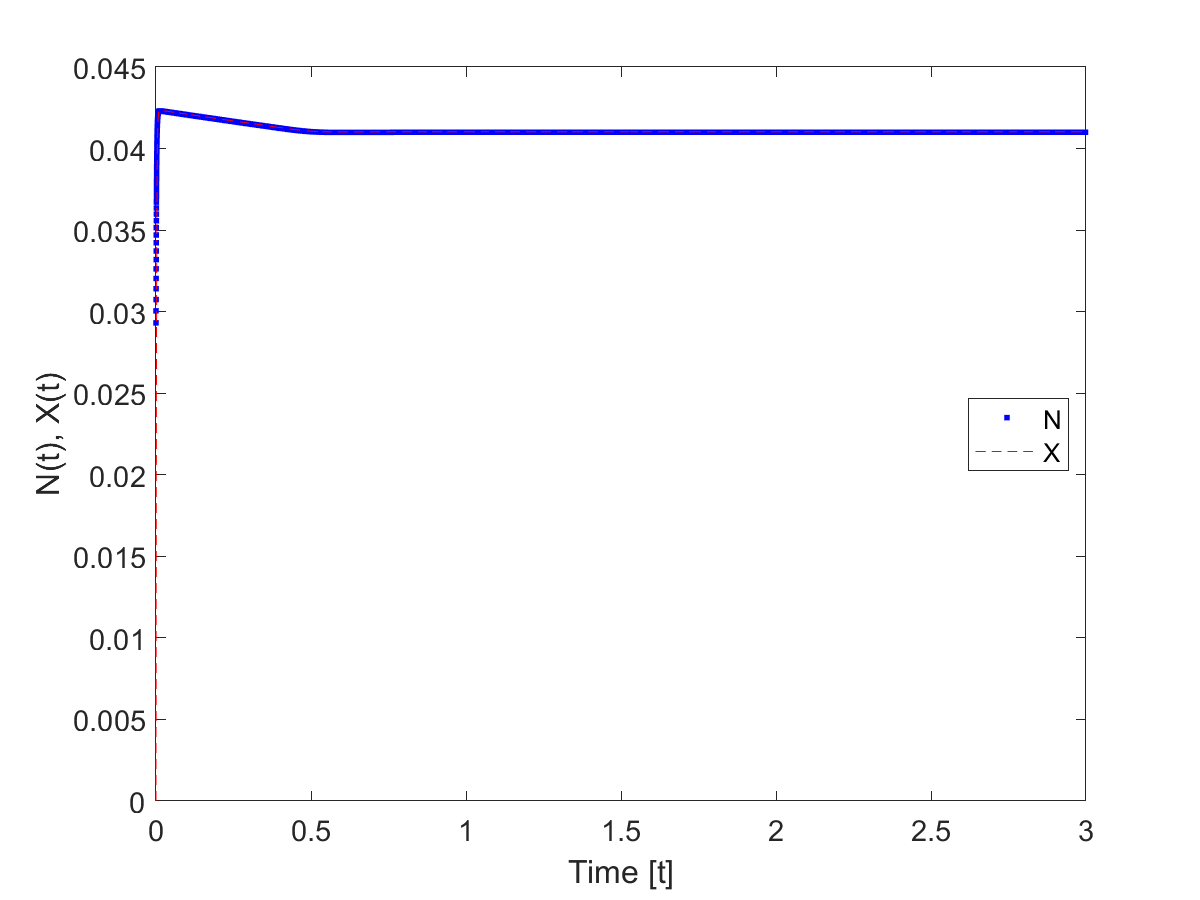} \\
  (c) & (d) \\
    \includegraphics[width=0.45\textwidth]{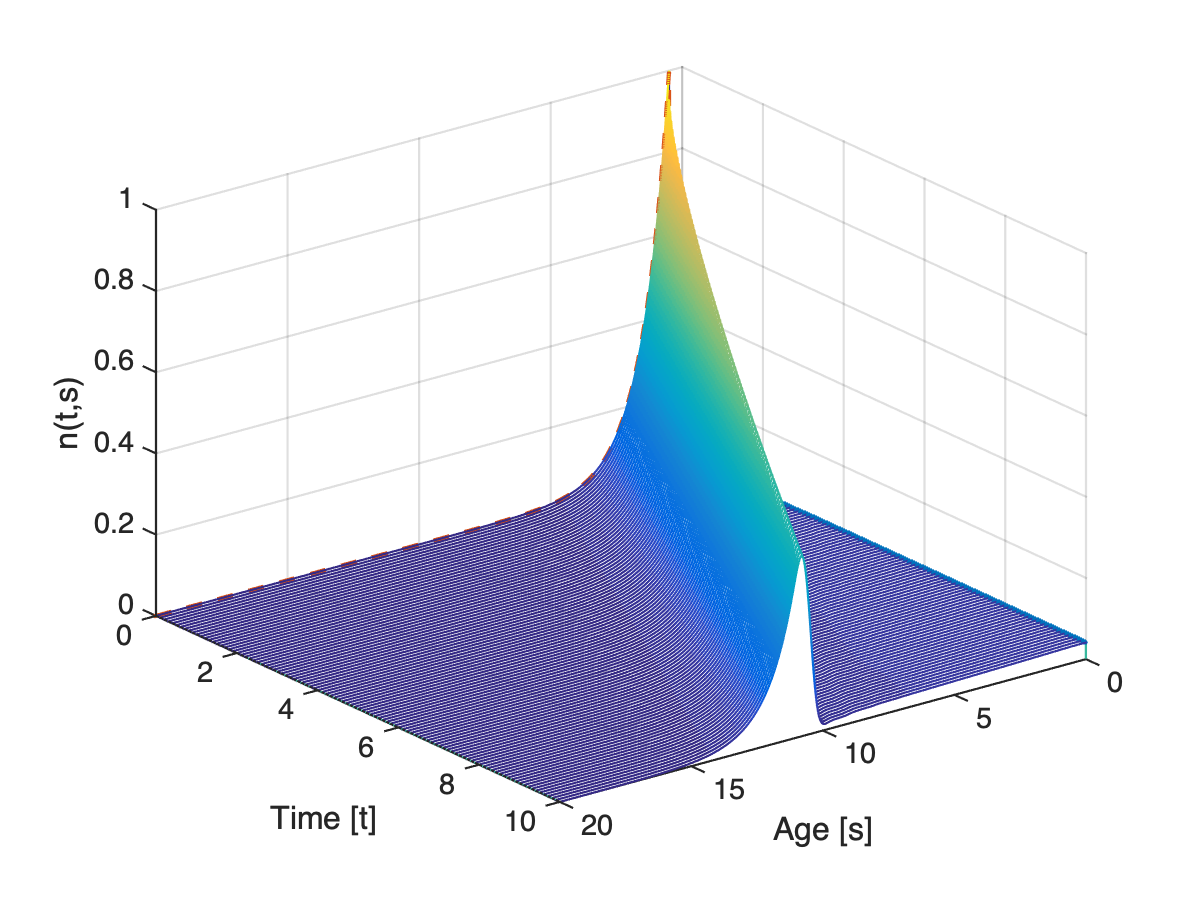}   & 
    \includegraphics[width=0.45\textwidth]{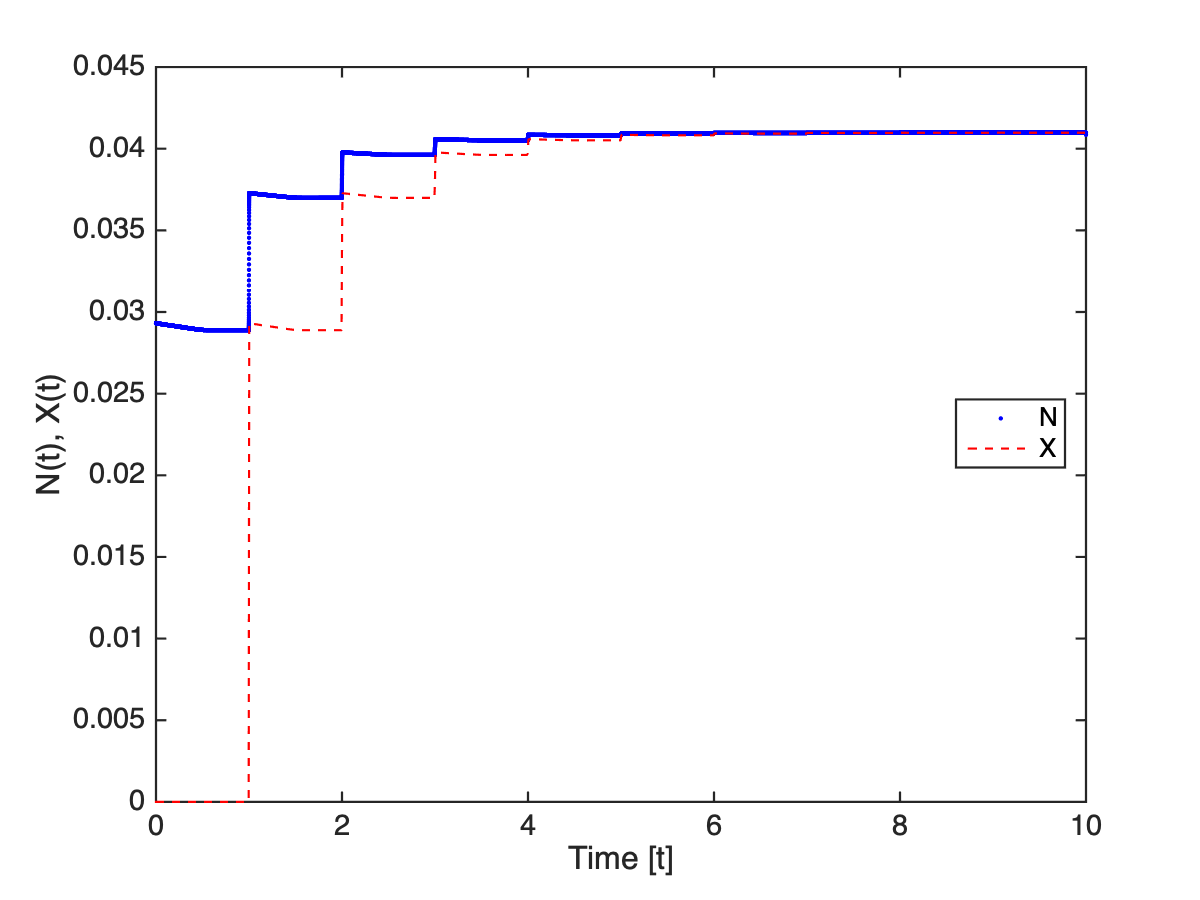} \\   \end{tabular}
    \caption{An excitatory case for the \eqref{eq:delay} equation. (a-b) Density $n(t,s)$, discharging flux $N(t)$ and total activity $X(t)$ for the \eqref{eq:delay} equation with $\alpha(t)\approx \delta(t)$. (c-d) Same variables of the system with $\alpha(t)\approx\delta(t-1)$.}
    \label{fig:exp_excita_r3_delay}
\end{figure}


\subsection*{Example 4: A variable refractory period \cite{pakdaman2009dynamics}}
Based on Example 2 in \cite{pakdaman2009dynamics}, we consider a firing coefficient with variable refractory period as in Equation \eqref{p-esp2} for the \eqref{eq:delay} equation with parameters given by 
\begin{equation}\label{exp4:PPS}
  p(s,X)=\chi_{\{s>\sigma(X)\}}(s),\quad  \sigma(X)=2-\frac{X^4}{X^4+1},  \quad \alpha(t)=J\alpha_1(t),\quad n^0(s)=e^{-(s-1)}\chi_{\{s>1\}}(s),
\end{equation}
where $J>0$ is the connectivity parameter of the network. As it was studied in \cite{pakdaman2009dynamics}, the system has different behaviors depending value of $J$. When $J$ is small the network is weakly connected and the dynamics are close to the linear case, while if $J$ is large the network is strongly connected and different asymptotic behaviors are possible. We recall that when $\alpha(t)=\delta_0$, we formally obtain the \eqref{eq:model} equation where 
\begin{equation}\label{exp4:PPSlocal}
 X(t)=JN(t),\quad\text{and},\quad  p(s,N)=\chi_{\{s>\sigma(JN)\}}(s).
\end{equation}

Taking $J=2.5$ as in \cite{pakdaman2009dynamics}, in Figure~\ref{fig:PPS} we compare the numerical approximation of $N(t)$ with $t\in[0,14]$, for both equations \eqref{eq:model} and \eqref{eq:delay}. In Figure \ref{fig:PPS}(a) we observe that the solution of the \eqref{eq:model} equation is asymptotic to a periodic pattern with jump discontinuities, where this type of solutions were also observed in \cite{torres2021elapsed}. We also observe that when a jump discontinuity arises for $N(t)$ in Figure \ref{fig:PPS}(a), the function $\Psi(N,n)$ is close to zero as we see in Figure \ref{fig:PPS}(b), verifying numerically the invertibility condition \eqref{cond:inverti} that ensures the continuity of solutions.

In Figure \ref{fig:PPS}(c) we observe that the discharging flux $N(t)$ in the \eqref{eq:delay} equation is a smooth approximation of the solution observed in Figure \ref{fig:PPS}(a) and we see the same phenomenon for $\Tilde{X}(t)\coloneqq X(t)/J$, corresponding to a normalization of the total activity in order to compare these quantities. Finally in Figure~\ref{fig:PPS}(d) we display the corresponding numerical approximation of the \eqref{eq:delay} equation with $n(t,s)$ for $(t,s)\in[0,14]\times[0,8]$, which also follows a periodic pattern.

\begin{figure}[t]
 \begin{tabular}{cc}
     (a) & (b)  \\
\includegraphics[width=0.45\textwidth]{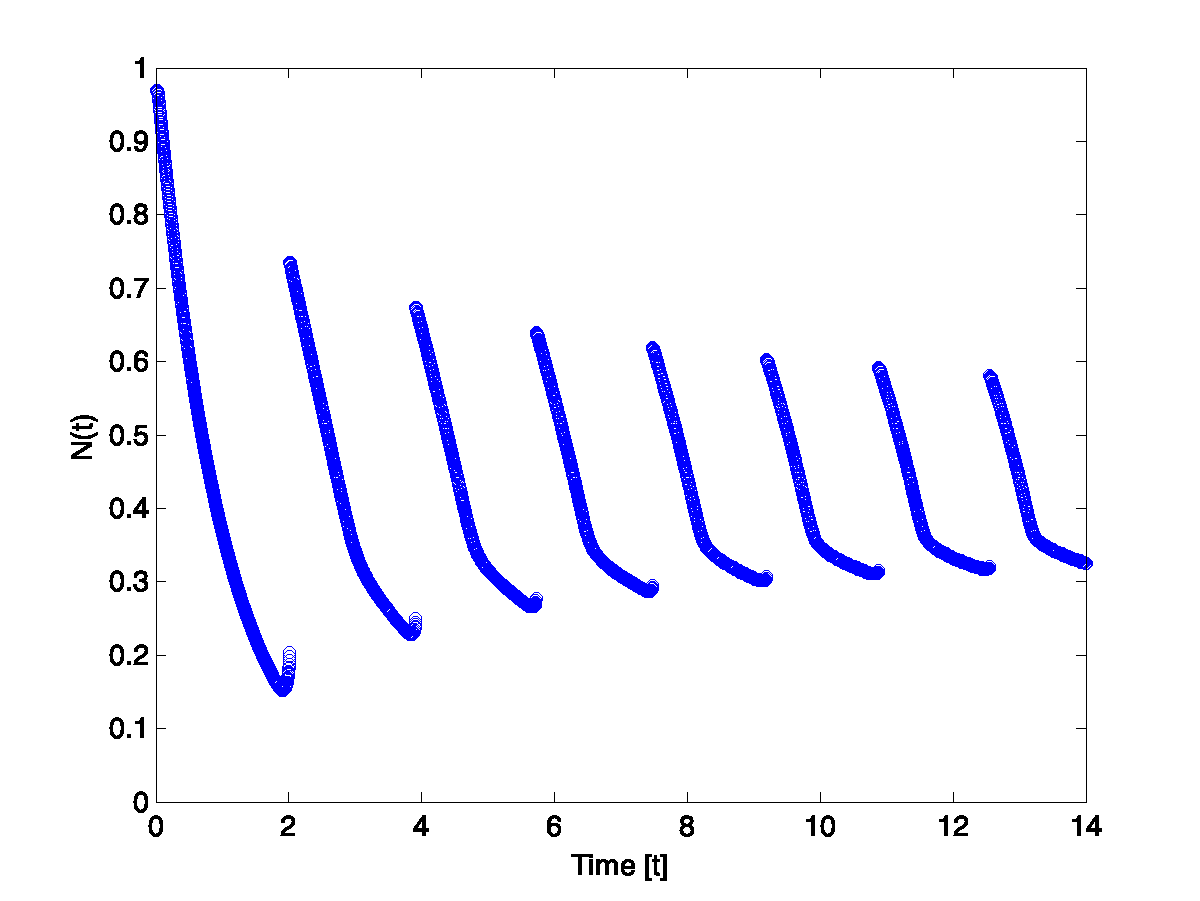}  &\includegraphics[width=0.45\textwidth]{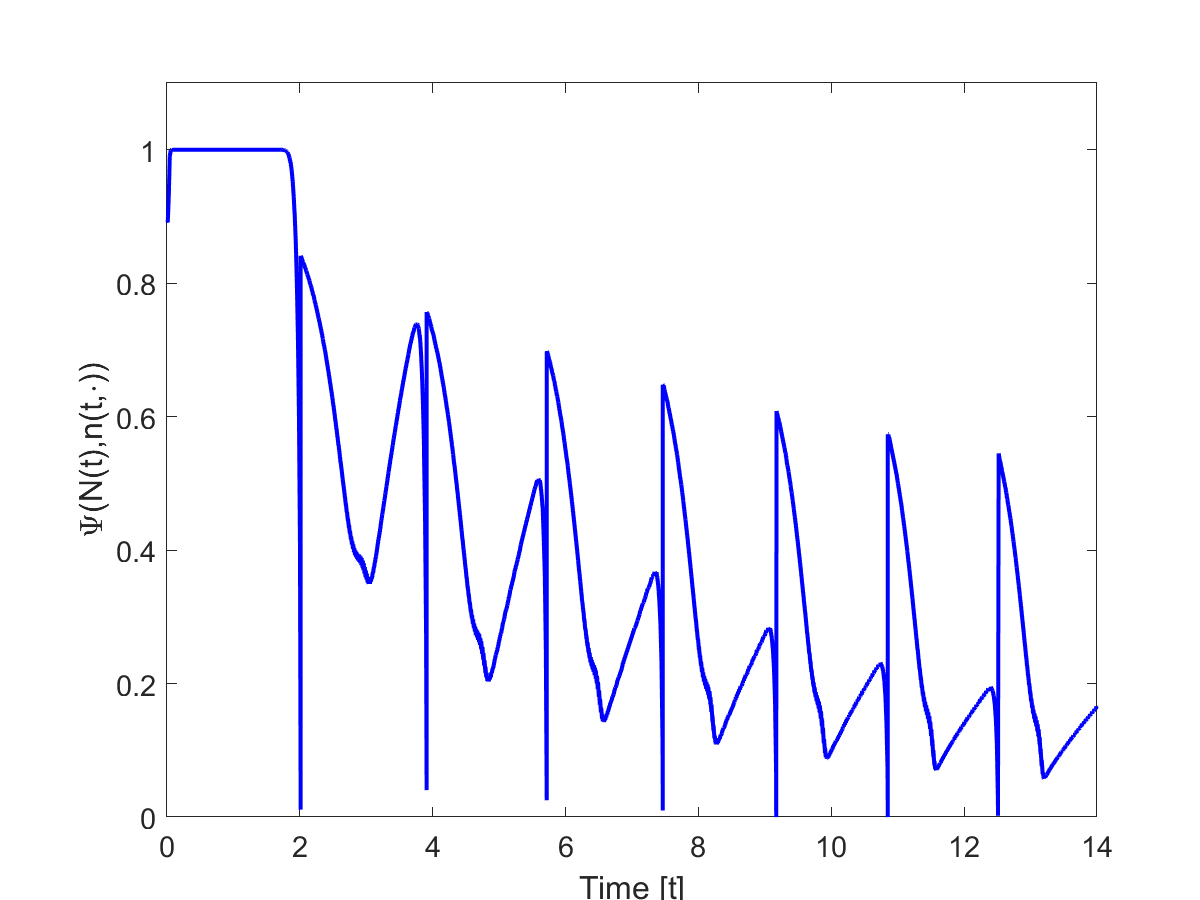} \\
     (c) & (d)  \\
\includegraphics[width=0.45\textwidth]{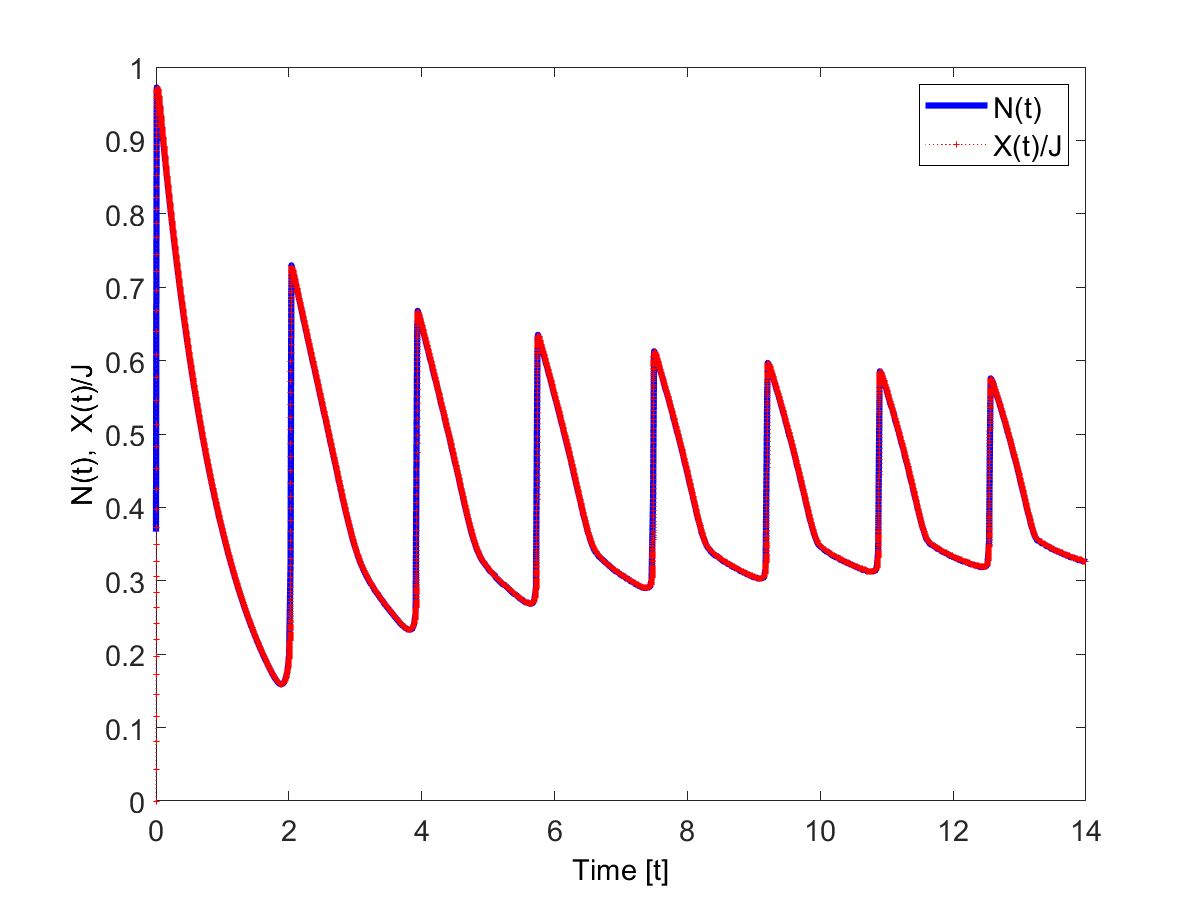} &
\includegraphics[width=0.45\textwidth]{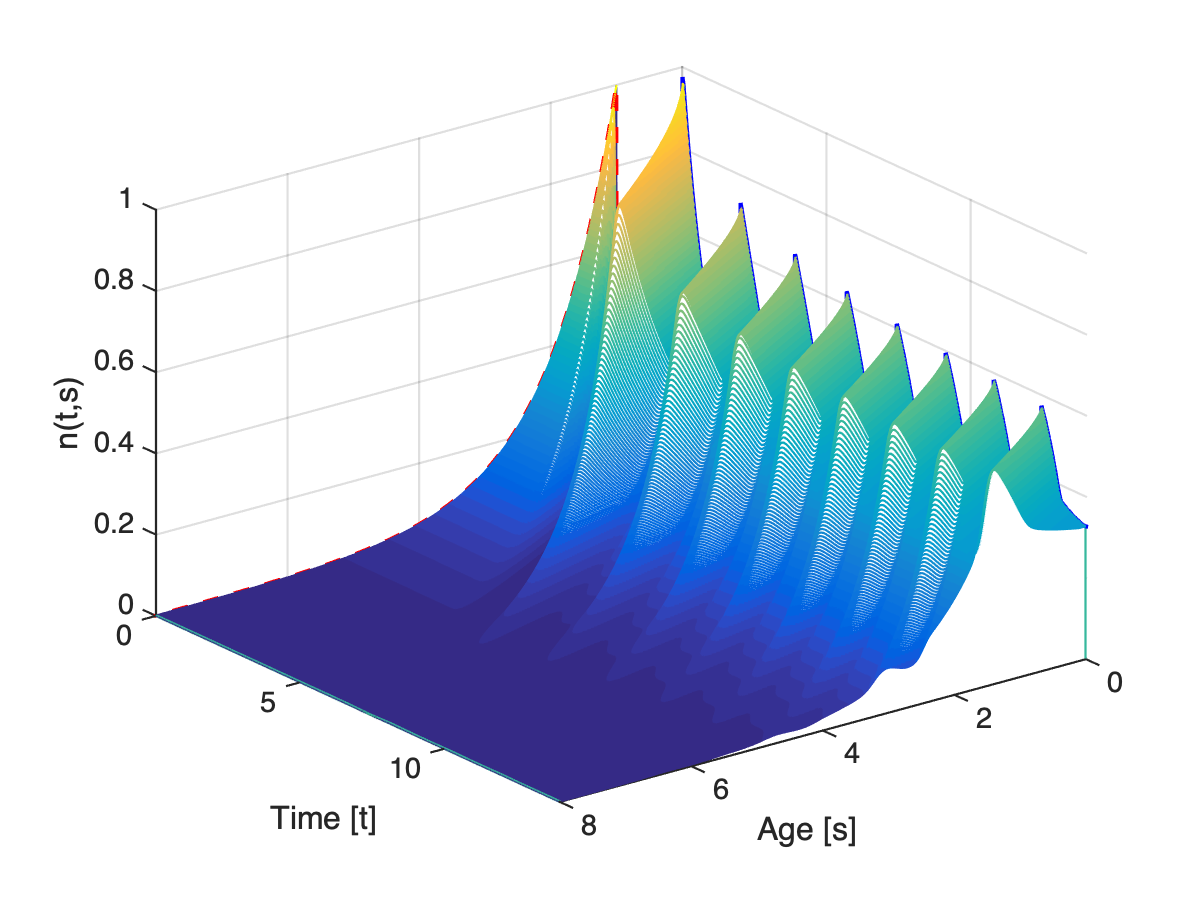}\\
 \end{tabular}
    \caption{firing coefficient with variable refractory period. Comparing numerical approximation of $N(t)$ for both equations \eqref{eq:model} and \eqref{eq:delay} . (a) Discharging flux $N(t)$ for \eqref{eq:model} equation, (b) Invertibility condition $\Psi(N,n)$ for \eqref{eq:model}. (c) $N(t)$ and $X(t)/J$ for \eqref{eq:delay} equation, (d) Density $n(t,s)$ for \eqref{eq:delay}.}
    \label{fig:PPS}
\end{figure}

\section*{Conclusion and perspectives}
In this article we managed to improve the proof of \cite{pakdaman2009dynamics,canizo2019asymptotic} on well-posedness for the instantaneous transmission equation by stating a general invertibility criterion, allowing to extend the theory for wider types of firing coefficients $p$, including for example the strongly inhibitory case. This criterion establishes the necessary condition to extend a solution in a continuous way, even in the case we do not have uniqueness, and we get the necessary condition for a solution to have jump discontinuities. The key idea is to apply the implicit function theorem to the correct fixed-point problem and the arguments can be extended when this rate is not necessarily bounded. This motivates the study of elapsed time model when the activity of neurons may increase to infinity and blow-up or other special phenomena might arise.

Another interesting question is convergence of the delay kernel $\alpha(t)$ to the Dirac's mass $\delta(t)$ in order to compare both equations \eqref{eq:model} and \eqref{eq:delay}. We conjecture that the total activity $X(t)$ of the \eqref{eq:delay} equation converges almost everywhere (or in some norm) to the discharging flux $N(t)$ of the \eqref{eq:model} equation. In particular we believe that the convergence holds for every $t>0$ except when $N(t)$ has a jump discontinuity. This motivates to determine if the assumption of instantaneous transmission is actually a good approximation of the neural dynamics, which have indeed a certain delay. As in Remark \ref{past}, the definition of $N(t)$ when $t<0$ in for the \eqref{eq:delay} will determine which solution of the \eqref{eq:model} is recovered when $\alpha$ converges to a Dirac's mass, in case of multiple solutions of \eqref{eq:model}.

Similarly, when the delay kernel $\alpha(t)$ converges to the Dirac's mass $\delta(t-d)$ in the sense of distributions, we conjecture that solutions of the \eqref{eq:delay} equations converge to the solutions of Equation \eqref{eq:delaypuro}. 

From a numerical point of view, we proved the convergence of the adapted explicit upwind scheme for the elapse time model relying on the mass-preserving property, the analysis of the fixed-point equations \eqref{eqfijo-N}, \eqref{eqfijo-X} and the key BV-estimate, from where we obtain the compactness to conclude the result. Despite being a simple and robust scheme, the main limitations of our approach is that we consider an explicit scheme that forces CFL condition and we need the initial data to have enough regularity, while other methods are potentially able to deal with measure or less regular $L^1$ functions.

This study of the elapsed-time model can be also extended by considering implicit or semi-discrete schemes, but a more detailed analysis on the mass conservation and entropy estimates must be considered. Other possible discretizations to solve the equations include for the example high-order Runge-Kutta-WENO methods (see for example \cite{leveque1992numerical,godlewski1996numerical,cockburn1998essentially,bouchut2004nonlinear}). These alternatives might be useful to deal with potential jump discontinuities and when the coefficient $p$ is not bounded, which implies that the total activity may be also unbounded. Furthermore, this numerical analysis can be considered for other extensions of the elapsed time equation such as the model with fragmentation \cite{pakdaman2014adaptation}, spatial dependence \cite{torres2020dynamics,dumont2024oscillations}, the multiple-renewal equation \cite{torres2022multiple} and the model with leaky memory variable in \cite{fonte2022long} or other type of structured equations.

\subsubsection*{Acknowledgements}
This work has been supported by ANID project ECOS200018. MS and LMV was partially supported by ANID-Chile through Centro de Modelamiento  Matem\'atico  (FB210005) and INRIA Associated team ANACONDA. 
MS was supported by  Fondecyt-ANID project 1220869, and Jean d'Alembert fellowship program, Université de Paris-Saclay.
NT was supported by the grant Juan de la Cierva FJC2021-046894-I funded by MCIN/AEI with the European Union NextGenerationEU/PRTR, the project EUR SPECTRUM with the initiative IDEX of Université de Côte d'Azur and by the Center for Mathematical Modeling (CMM) BASAL fund FB210005 for center of excellence from ANID-Chile.

\subsubsection*{ORCID iDs}
Mauricio Sepulveda: \href{https://orcid.org/0000-0001-8463-3830}{https://orcid.org/0000-0001-8463-3830}.\\
Nicolas Torres: \href{https://orcid.org/0000-0001-6059-9754}{https://orcid.org/0000-0001-6059-9754}.\\
Luis Miguel Villada: \href{https://orcid.org/0000-0002-4860-4431}{https://orcid.org/0000-0002-4860-4431}.

\typeout{}
\bibliography{main.bib}

\begin{thebibliography}{10}

\bibitem{gerstner1995time}
Gerstner W.
\newblock Time structure of the activity in neural network models.
\newblock Physical review E. 1995;51(1):738.

\bibitem{coombes2023neurodynamics}
Coombes S, Wedgwood KC.
\newblock Neurodynamics.
\newblock Texts in applied mathematics. 2023.

\bibitem{carrillo2025nonlinear}
Carrillo JA, Roux P.
\newblock Nonlinear partial differential equations in neuroscience: from
  modelling to mathematical theory.
\newblock arXiv preprint arXiv:250106015. 2025.

\bibitem{lapicque}
Lapicque L.
\newblock Recherches quantitatives sur l'excitation \'electrique des nerfs
  trait\'ee comme une polarisation.
\newblock J Physiol Pathol Gen. 1907;9:620-35.

\bibitem{caceres2011analysis}
C{\'a}ceres MJ, Carrillo JA, Perthame B.
\newblock Analysis of nonlinear noisy integrate \& fire neuron models: blow-up
  and steady states.
\newblock The Journal of Mathematical Neuroscience. 2011;1(1):7.

\bibitem{carrillo2015qualitative}
Carrillo JA, Perthame B, Salort D, Smets D.
\newblock Qualitative properties of solutions for the noisy integrate and fire
  model in computational neuroscience.
\newblock Nonlinearity. 2015;28(9):3365.

\bibitem{caceres2021understanding}
Cáceres MJ, Ramos-Lora A.
\newblock An Understanding of the Physical Solutions and the Blow-up Phenomenon
  for Nonlinear Noisy Leaky Integrate and Fire Neuronal Models.
\newblock Communications in Computational Physics. 2021;30(3):820-50.

\bibitem{carrillo2023noise}
Carrillo JA, Roux P, Solem S.
\newblock Noise-driven bifurcations in a nonlinear Fokker--Planck system
  describing stochastic neural fields.
\newblock Physica D: Nonlinear Phenomena. 2023;449:133736.

\bibitem{caceres2019global}
C{\'a}ceres MJ, Roux P, Salort D, Schneider R.
\newblock Global-in-time solutions and qualitative properties for the NNLIF
  neuron model with synaptic delay.
\newblock Communications in Partial Differential Equations.
  2019;44(12):1358-86.

\bibitem{ikeda2022theoretical}
Ikeda K, Roux P, Salort D, Smets D.
\newblock Theoretical study of the emergence of periodic solutions for the
  inhibitory NNLIF neuron model with synaptic delay.
\newblock Mathematical Neuroscience and Applications. 2022;2.

\bibitem{caceres2025asymptotic}
C{\'a}ceres MJ, Ca{\~n}izo JA, Ramos-Lora A.
\newblock On the asymptotic behavior of the nnlif neuron model for general
  connectivity strength.
\newblock Communications in Mathematical Physics. 2025;406(5):115.

\bibitem{caceres2024sequence}
C\'aceres MJ, Ca\~nizo JA, Ramos-Lora A.
\newblock Sequence of pseudoequilibria describes the long-time behavior of the
  nonlinear noisy leaky integrate-and-fire model with large delay.
\newblock Phys Rev E. 2024 Dec;110:064308.
\newblock Available from:
  \url{https://link.aps.org/doi/10.1103/PhysRevE.110.064308}.

\bibitem{pham1998activity}
Pham J, Pakdaman K, Champagnat J, Vibert JF.
\newblock Activity in sparsely connected excitatory neural networks: effect of
  connectivity.
\newblock Neural Networks. 1998;11(3):415-34.

\bibitem{ly2009spike}
Ly C, Tranchina D.
\newblock Spike train statistics and dynamics with synaptic input from any
  renewal process: a population density approach.
\newblock Neural Computation. 2009;21(2):360-96.

\bibitem{chevallier2015microscopic}
Chevallier J, C{\'a}ceres MJ, Doumic M, Reynaud-Bouret P.
\newblock Microscopic approach of a time elapsed neural model.
\newblock Mathematical Models and Methods in Applied Sciences.
  2015;25(14):2669-719.

\bibitem{chevallier2015mean}
Chevallier J.
\newblock Mean-field limit of generalized Hawkes processes.
\newblock Stochastic Processes and their Applications. 2017;127(12):3870-912.

\bibitem{quininao2016microscopic}
Qui{\~n}inao C.
\newblock A microscopic spiking neuronal network for the age-structured model.
\newblock Acta Applicandae Mathematicae. 2016;146:29-55.

\bibitem{schwalger2019mind}
Schwalger T, Chizhov AV.
\newblock Mind the last spike—firing rate models for mesoscopic populations
  of spiking neurons.
\newblock Current opinion in neurobiology. 2019;58:155-66.

\bibitem{dumont2016noisy}
Dumont G, Henry J, Tarniceriu CO.
\newblock Noisy threshold in neuronal models: connections with the noisy leaky
  integrate-and-fire model.
\newblock Journal of mathematical biology. 2016;73(6-7):1413-36.

\bibitem{dumont2020theoretical}
Dumont G, Henry J, Tarniceriu CO.
\newblock A theoretical connection between the noisy leaky integrate-and-fire
  and the escape rate models: the non-autonomous case.
\newblock Mathematical Modelling of Natural Phenomena. 2020;15:59.

\bibitem{pakdaman2009dynamics}
Pakdaman K, Perthame B, Salort D.
\newblock Dynamics of a structured neuron population.
\newblock Nonlinearity. 2009;23(1):55.

\bibitem{pakdaman2013relaxation}
Pakdaman K, Perthame B, Salort D.
\newblock Relaxation and self-sustained oscillations in the time elapsed neuron
  network model.
\newblock SIAM Journal on Applied Mathematics. 2013;73(3):1260-79.

\bibitem{pakdaman2014adaptation}
Pakdaman K, Perthame B, Salort D.
\newblock Adaptation and fatigue model for neuron networks and large time
  asymptotics in a nonlinear fragmentation equation.
\newblock The Journal of Mathematical Neuroscience. 2014;4:1-26.

\bibitem{canizo2019asymptotic}
Ca{\~n}izo JA, Yolda{\c{s}} H.
\newblock Asymptotic behaviour of neuron population models structured by
  elapsed-time.
\newblock Nonlinearity. 2019;32(2):464.

\bibitem{mischler2018}
Mischler S, Weng Q.
\newblock Relaxation in time elapsed neuron network models in the weak
  connectivity regime.
\newblock Acta Applicandae Mathematicae. 2018;157(1):45-74.

\bibitem{mischler2018weak}
Mischler S, Qui{\~n}inao C, Weng Q.
\newblock Weak and strong connectivity regimes for a general time elapsed
  neuron network model.
\newblock Journal of Statistical Physics. 2018;173(1):77-98.

\bibitem{caceres2025comparison}
C{\'a}ceres MJ, Ca{\~n}izo JA, Torres N.
\newblock Comparison principles and asymptotic behavior of delayed
  age-structured neuron models.
\newblock arXiv preprint arXiv:250213553. 2025.

\bibitem{torres2021elapsed}
Torres N, C{\'a}ceres MJ, Perthame B, Salort D.
\newblock An elapsed time model for strongly coupled inhibitory and excitatory
  neural networks.
\newblock Physica D: Nonlinear Phenomena. 2021:132977.

\bibitem{perthame2025strongly}
Perthame B, Salort D, Rieutord C.
\newblock Strongly nonlinear age structured equation, time-elapsed model and
  large delays.
\newblock arXiv preprint arXiv:250309157. 2025.

\bibitem{caceres2025local}
C{\'a}ceres MJ, Ca{\~n}izo JA, Torres N.
\newblock On the local stability of the elapsed-time model in terms of the
  transmission delay and interconnection strength.
\newblock arXiv preprint arXiv:250417358. 2025.

\bibitem{torres2020dynamics}
Torres N, Salort D.
\newblock Dynamics of neural networks with elapsed time model and learning
  processes.
\newblock Acta Applicandae Mathematicae. 2020;170(1):1065-99.

\bibitem{dumont2024oscillations}
Dumont G, Henry J, Tarniceriu CO.
\newblock Oscillations in a fully connected network of leaky integrate-and-fire
  neurons with a Poisson spiking mechanism.
\newblock Journal of Nonlinear Science. 2024;34(1):18.

\bibitem{torres2022multiple}
Torres N, Perthame B, Salort D.
\newblock A multiple time renewal equation for neural assemblies with elapsed
  time model.
\newblock Nonlinearity. 2022;35(10):5051.

\bibitem{fonte2022long}
Fonte C, Schmutz V.
\newblock Long Time Behavior of an Age-and Leaky Memory-Structured Neuronal
  Population Equation.
\newblock SIAM Journal on Mathematical Analysis. 2022;54(4):4721-56.

\bibitem{lopez1991upwind}
L{\'o}pez-Marcos J.
\newblock An upwind scheme for a nonlinear hyperbolic integro-differential
  equation with integral boundary condition.
\newblock Computers \& Mathematics with Applications. 1991;22(11):15-28.

\bibitem{sulsky1994numerical}
Sulsky D.
\newblock Numerical solution of structured population models.
\newblock Journal of Mathematical Biology. 1994;32(5):491-514.

\bibitem{abia2005age}
Abia L, Angulo O, L{\'o}pez-Marcos J.
\newblock Age-structured population models and their numerical solution.
\newblock Ecological modelling. 2005;188(1):112-36.

\bibitem{iannelli2017basic}
Iannelli M, Milner F, et~al.
\newblock The basic approach to age-structured population dynamics.
\newblock Lecture Notes on Mathematical Modelling in the Life Sciences
  Springer, Dordrecht. 2017.

\bibitem{carrillo2014splitting}
Carrillo JA, Gwiazda P, Ulikowska A.
\newblock Splitting-particle methods for structured population models:
  convergence and applications.
\newblock Mathematical Models and Methods in Applied Sciences.
  2014;24(11):2171-97.

\bibitem{gwiazda2010nonlinear}
Gwiazda P, Lorenz T, Marciniak-Czochra A.
\newblock A nonlinear structured population model: Lipschitz continuity of
  measure-valued solutions with respect to model ingredients.
\newblock Journal of Differential Equations. 2010;248(11):2703-35.

\bibitem{ackleh2019finite}
Ackleh AS, Lyons R, Saintier N.
\newblock Finite difference schemes for a structured population model in the
  space of measures.
\newblock Mathematical biosciences and engineering: MBE. 2019;17(1):747-75.

\bibitem{perthame2006transport}
Perthame B.
\newblock Transport equations in biology.
\newblock Frontiers in Mathematics. Basel: Springer Science \& Business Media;
  2006.

\bibitem{leveque1992numerical}
LeVeque RJ, Leveque RJ.
\newblock Numerical methods for conservation laws. vol. 132.
\newblock Springer; 1992.

\bibitem{godlewski1996numerical}
Godlewski E, Raviart PA.
\newblock Numerical approximation of hyperbolic systems of conservation laws.
  vol. 118.
\newblock Springer; 1996.

\bibitem{bouchut2004nonlinear}
Bouchut F.
\newblock Nonlinear stability of finite Volume Methods for hyperbolic
  conservation laws: And Well-Balanced schemes for sources.
\newblock Springer Science \& Business Media; 2004.

\bibitem{diekmann2012delay}
Diekmann O, Van~Gils SA, Lunel SMV, Walther HO.
\newblock Delay equations: functional-, complex-, and nonlinear analysis. vol.
  110.
\newblock Springer Science \& Business Media; 2012.

\bibitem{caceres2018analysis}
C{\'a}ceres MJ, Schneider R.
\newblock Analysis and numerical solver for excitatory-inhibitory networks with
  delay and refractory periods.
\newblock ESAIM: Mathematical Modelling and Numerical Analysis.
  2018;52(5):1733-61.

\bibitem{cormier2021hopf}
Cormier Q, Tanr{\'e} E, Veltz R.
\newblock Hopf bifurcation in a mean-field model of spiking neurons.
\newblock Electronic Journal of Probability. 2021;26:1-40.

\bibitem{cockburn1998essentially}
Cockburn B, Shu CW, Johnson C, Tadmor E, Shu CW.
\newblock Essentially non-oscillatory and weighted essentially non-oscillatory
  schemes for hyperbolic conservation laws.
\newblock Springer; 1998.

\end{thebibliography}
\bibliographystyle{vancouver}

\end{document}